\newtheorem{cor}{Corollary}[section]
\newtheorem{definition}[cor]{Definition}
\newtheorem{cl}[cor]{Claim}
\newtheorem{lem}[cor]{Lemma}
\newtheorem{prop}[cor]{Proposition}
\newcommand\g{G}
\newcommand\rr{R}
\newcommand\bb{\delta}
\newcommand\atang{{\rm arctanh}}
\newtheorem{theorr}{Theorem}
\newtheorem{propp}[theorr]{Proposition}
\title{\bf Blow-up profile for the complex-valued semilinear wave equation}
\author{Asma Azaiez\\
\small Universit\'e Paris 13, Sorbonne Paris Cit\'e \\
\small LAGA, CNRS (UMR 7539)\\
\small F-93430, Villetaneuse - France}
\begin{document}
\maketitle 
\begin{abstract} In this paper, we consider a blow-up solution for the complex-valued semilinear wave equation with power nonlinearity in one space dimension. We first characterize all the solutions of the associated stationary problem as a two-parameter family. Then, we use a dynamical system formulation to show that the solution in self-similar variables approaches some particular stationary one in the energy norm, in the non-characteristic case. This gives the blow-up profile for the original equation in the non-characteristic case. Our analysis is not just a simple adaptation of the already handled real case. In particular, there is one more neutral-direction in our problem, which we control thanks to a modulation technique.
 \end{abstract}
{\bf Keywords}: Wave equation, blow-up profile, stationary solution, modulation technique,\\ complex valued PDE.

\noindent {\bf AMS classification}: 35L05, 35L81, 35B44, 39B32, 35B40, 34K21, 35B35.
\begin{flushright}
{\it Paper accepted for publication in Trans. Amer. Math. Soc.\;\;\;\;\;\;}
\end{flushright}
\bigskip
\section{Introduction}
\subsection{The problem and known results}
We consider the following complex-valued one-dimensional semilinear wave equation
\begin{equation} \left\{
\begin{array}{l}
\displaystyle\partial^2_{t} u = \partial^2_{x} u+|u|^{p-1}u, \\
u(0)=u_{0} \mbox{ and }  u_{t}(0) = u_{1},
\end{array}
\right . \label{waveq}
\end{equation}
where $u(t):  x\in \mathbb{R}\to u(x,t) \in \mathbb{C} ,\, p>1,\, u_0 \in
H^1_{loc,u}$ and $ u_1\in L^2_{loc,u}$, with $$||
v||^2_{L^2_{loc,u}}=\displaystyle\sup\limits_{a\in \mathbb{ R}}
\int_{|x-a|<1}|v(x)|^2 dx  \mbox{ and }|| v||^2_{H^1_{loc,u}}=||
v||^2_{L^2_{loc,u}}+||  \nabla v||^2_{L^2_{loc,u}}\cdot$$

\medskip
\noindent The Cauchy problem for equation (\ref{waveq}) in the space $H^1_{loc,u}\times L^2_{loc,u}$ follows from the finite speed of propagation and the wellposedness in $H^1\times L^2$. See for instance Ginibre, Soffer and Velo \cite{MR1190421}, Ginibre and Velo \cite{MR1256167}, Lindblad and Sogge \cite{MR1335386} (for the local in time wellposedness in $H^1\times L^2$). The existence of blow-up solutions for equation (\ref{waveq}) is a consequence of the finite speed of propagation and ODE techniques (see for example Levine \cite{l74} and Antonini and Merle \cite{MR1861514}). More blow-up results can be found in Caffarelli and Friedman \cite{MR849476}, Alinhac \cite{ali95} and \cite{MR1968197}, Kichenassamy and Littman \cite{KL93a}, \cite{KL93b} Shatah and Struwe \cite{MR1674843}.
%

The real case (in one space dimension) has been understood completely, in a series of papers by Merle and Zaag \cite{MR2362418}, \cite{MR2415473}, \cite{MR2931219} and \cite{Mz12} and in C\^ote and Zaag \cite{CZ12} (see also the note \cite{Mz10}). Some of those results have been extended to higher dimensions for conformal or subconformal $p$:
\begin{equation}\label{conp}
 1<p\le p_c \equiv 1+\frac{4}{N-1},
\end{equation}
under radial symmetry outside the origin in \cite{MR2799813}. For non radial solutions, we would like to mention \cite{MZ05} and \cite{MZ2005} where the blow-up rate was obtained. We also mention the recent contribution of \cite{MZ13} and \cite{MZ13'} where the blow-up behavior is given, together with some stability results.

 Considering the behavior of radial solutions at the origin, Donninger and Sch{\"o}rkhuber \cite{MR2909934} were able to prove the stability of the space-independent solution (i.e. the solution of the associated ODE $u''=u^p$) with respect to perturbation in the initial data. Willing to be as exhaustive as possible in our bibliography about the blow-up question for equation (\ref{waveq}), we would like to mention some blow-up results in the superconformal, Sobolev critical and supercritical ranges for equation (\ref{waveq}).

When 
$$N\ge 2\mbox{ and }p_c<p<p_s\equiv\frac{N+2}{N-2},$$
Killip, Stoval and Visan found in \cite{MR2869186} an upper bound on the blow-up rate. That bound is larger than the solution of the associated ODE $u''=u^p$, and is therefore thought to be non optimal. In \cite{MR3130348} Hamza and Zaag gives a different proof of the results of \cite{MR2869186}, improving some of their estimates.

When
$$N\ge 3\mbox{ and }p=p_s,$$
equation (\ref{waveq}) has attracted a lot of interest. Many authors addressed the question of obtaining sufficient conditions for scattering and blow-up through energy estimates, in relation with the ground state (see Kenig and Merle \cite{20}, Duyckaerts and Merle \cite{14}). Furthermore, dynamics around the soliton were studied: see Krieger and Schlag \cite{26}, Krieger, Nakanishi and Schlag \cite{24} and \cite{25}. There are also some remarkable classification theorems by Duyckaerts, Kenig and Merle \cite{10}, \cite{13}, \cite{12} and \cite{11}. Concerning the blow-up behavior, we would like to mention that Donninger, Huang, Krieger and Schlag prove in \cite{6} the existence of so-called ``exotic'' blow-up solutions when $N=3$, whose blow-up rate oscillates between several pure-power laws.

When 
$$N\ge 3\mbox{ and }p>p_s,$$
much less is known. We would like just to mention that the stability result of Donninger and Sch\"{o}rkhuber proved in \cite{ds} the superconformal range, does hold in the Sobolev supercritical range too, at least when $N=3$. There is also a remarkable result by Kenig and Merle \cite{21} on the dynamics of solutions with some compactness property.

\medskip
 In this work, our aim is to study the profile of blow-up solutions in the complex case of equation (\ref{waveq}). In particular, relying on the work of Merle and Zaag in \cite{MR2362418}, we give a trapping result near the set of non-zero stationary solutions in self-similar variables. This study is far from being trivial since the complex structure introduces an additional zero eigenfunction in the linearized equation around the expected profile, and also because of the coupling between the real and the imaginary parts.

\medskip
If \emph{u} is a blow-up solution of (\ref{waveq}), we define (see
for example Alinhac \cite{ali95}) a continuous curve $\Gamma$ as the graph of a function ${x \mapsto T(x)}$ such that the domain of definition of $u$ (or the maximal influence domain of $u$) is 
\begin{equation*}\label{domaine-de-definition}
D_u=\{(x,t)| t<T(x)\}.
\end{equation*}

\noindent From the finite speed of propagation, $T$ is a 1-Lipschitz function. The time $\check {T}=\inf_{x \in \mathbb R}T(x)$ and the graph $\Gamma$  are called (respectively) the blow-up time and the blow-up graph of $u$.

\noindent Let us introduce the following non-degeneracy condition for $\Gamma$. If we introduce for all $x \in \mathbb{R},$ $t\le T(x)$ and $\delta>0$, the cone
\begin{equation*}
 \mathcal{C}_{x,t, \delta }=\{(\xi,\tau)\neq (x,t)\,| 0\le \tau\le t-\delta |\xi-x|\},
\end{equation*}
then our non-degeneracy condition is the following: $x_0$ is a non-characteristic point if
\begin{equation}\label{4}
 \exists \delta_0 = \delta (x_0) \in (0,1) \mbox{ such that } u \mbox{ is defined on }\mathcal{C}_{x_0,T(x_0), \delta_0}.
\end{equation}
If condition (\ref{4}) is not true, then we call $x_0$ a characteristic point. Already when $u$ is real-valued, we know from \cite{MR2931219} and \cite{CZ12} that there exist blow-up solutions with characteristic points.

\medskip
 Given some $x_0 \in \mathbb R,$ we introduce the following self-similar change of variables:
\begin{equation}\label{trans_auto}
w_{x_0}(y,s) =(T(x_0)-t)^\frac{2}{p-1}u(x,t), \quad  y=\frac{x-x_0}{T(x_0)-t}, \quad
s=-\log(T(x_0)-t).
\end{equation}
This change of variables transforms the backward light cone with vertex $(x_0, T(x_0))$ into the infinite cylinder $(y,s)\in (-1,1) \times [-\log T(x_0),+\infty).$ The function $w_{x_0}$ (we write $w$ for simplicity) satisfies the following equation for all $|y|<1$ and $s\ge -\log T(x_0)$:
\begin{eqnarray} \label{equa}
 \partial^2_{s} w=\mathcal{L}w-\frac{2(p+1)}{(p-1)^2}w+|w|^{p-1}w-\frac{p+3}{p-1} \partial_s w- 2 y \partial_{ys} w
 \end{eqnarray}
 \begin{eqnarray}\mbox{where }\mathcal{L} w=\frac{1}{\rho}\partial_y (\rho (1-y^2)\partial_y w)\, \mbox{ and }\, \rho (y)= (1-y^2)^\frac{2}{p-1}.\label{8}
\end{eqnarray}
This equation will be studied in the space
 \begin{eqnarray}\label{9}
\mathcal{ H}=\{ q \in H_{loc}^1 \times L_{loc}^2 ((-1,1),\mathbb{C}) \, \Big| \parallel  q \parallel_{\mathcal{ H}}^2 \equiv \int_{-1}^{1}(|q_1|^2+|q'_1|^2(1-y^2)+|q_2|^2)\rho\;dy< +\infty \} ,
 \end{eqnarray}
which is the energy space for $w$. Note that $\mathcal{ H}=\mathcal{ H}_0\times L_{\rho}^2$ where
 \begin{eqnarray}\label{10}
\mathcal{ H}_0=\{ r \in H_{loc}^1  ((-1,1),\mathbb{C}) \,\Big| \parallel  r \parallel_{\mathcal{ H}_0}^2 \equiv \int_{-1}^{1}(|r'|^2(1-y^2)+|r|^2)\rho\;dy< +\infty\} .
 \end{eqnarray}
Let us define 
\begin{equation}\label{15}
 E(w,\partial_s w)=\int_{-1}^{1} \left( \frac{1}{2} |\partial_s w|^2+\frac{1}{2} |\partial_y w|^2 (1-y^2)+\frac{p+1}{(p-1)^2}|w|^2-\frac{1}{p+1}|w|^{p+1}\right) \rho dy.
\end{equation}
By the argument of Antonini and Merle \cite{MR1861514}, which works straightforwardly in the complex case, we see that $E$ is a Lyapunov functional for equation (\ref{equa}).
Similarly, some arguments of the real case, can be adapted with no problems to the complex-case, others don't. As a matter of fact, the derivation of the blow-up rate works as in the real case whereas the convergence to the profile needs intricate estimates, and this is the goal of our paper. Let us first briefly state the result for the blow-up rate, then focus on the convergence question.
\subsection{Blow-up rate}

Only in this subsection, the space dimension will be extended to any $N\ge 1$. We assume in addition that $p$ is conformal or sub-conformal (see (\ref{conp})). We recall that for the real case of equation (\ref{waveq}), Merle and Zaag determined in \cite{MZ05} and \cite{MZ2005} the blow-up rate for (\ref{waveq}) in the region $\{(x,t)\,|\, t<\check T\}$ in a first step. Then in \cite{MR2147056}, they extended their result to the whole domain of definition $\{(x,t)\,|\, t< T(x)\}$. In the following, we give the growth estimate near the blow-up surface for solutions of equation (\ref{waveq}).
\begin{propp}\label{Th}{\bf (Growth estimate near the blow-up surface for solutions of equation (\ref{waveq}))}
If $u$ is a solution of (\ref{waveq}) with blow-up surface $\Gamma \,:\, \{x\rightarrow T(x)\},$ and if $x_0\in \mathbb{R}^N$ is non-characteristic (in the sense (\ref{4})) then,
\item{(i)} {\bf (Uniform bounds on $w$)} For all $s\ge -\log \frac{T(x_0)}{4}$:
$$0 < \epsilon_0(N, p)\le||w_{x_0}(s)||_{H^1(B)}+||\partial_s w_{x_0}(s)||_{L^2(B)}\le K.$$
\item{(ii)} {\bf (Uniform bounds on $u$)} For all $t\in [\frac{3}{4}T(x_0),T(x_0))$:
\begin{align*}
 &0 < \epsilon_0(N, p)\le(T(x_0)-t)^\frac{2}{p-1}\frac{||u(t)||_{L^2(B(x_0,T(x_0)-t))}}{(T(x_0)-t)^{N/2}}\\
&+(T(x_0)-t)^{\frac{2}{p-1}+1}\left( \frac{||\partial_t u(t)||_{L^2(B(x_0,T(x_0)-t))}}{(T(x_0)-t)^{N/2}}+\frac{||\nabla u(t)||_{L^2(B(x_0,T(x_0)-t))}}{(T(x_0)-t)^{N/2}}\right) \le K,
\end{align*}
where the constant $K$ depends only on $N,\, p,$ and on an upper bound on $T(x_0),1/T(x_0)$, $\delta_0 (x_0)$ and the initial data in $H^1_{loc,u}\times L^2_{loc, u}$.
\end{propp}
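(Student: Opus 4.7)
The plan is to follow the strategy developed by Merle and Zaag for the real-valued case in \cite{MZ05}, \cite{MZ2005}, \cite{MR2147056}, and to verify that each step adapts directly to the complex setting. The essential ingredients are already in place: the self-similar formulation (\ref{equa}), the energy space $\mathcal{H}$ from (\ref{9}), and, most importantly, the Lyapunov functional $E$ from (\ref{15}), whose monotonicity for the complex problem is noted right after (\ref{15}) to follow from the Antonini--Merle argument without modification.

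For the upper bound in (i), I would compute $\frac{d}{ds} E(w,\partial_s w)$ along the flow of (\ref{equa}) to obtain a dissipation identity of the shape
\begin{equation*}
\frac{d}{ds} E(w,\partial_s w) = -\frac{4}{p-1}\int_{-1}^{1} |\partial_s w|^2 \frac{\rho}{1-y^2}\,dy + \text{(boundary contributions at } y=\pm 1 \text{)}.
\end{equation*}
The only modification from the real case is to replace $w^2$, $w\partial_s w$, $w^{p+1}$ by $|w|^2$, $\operatorname{Re}(\bar w \partial_s w)$, $|w|^{p+1}$, which is harmless since $\frac{d}{ds}\int |w|^{p+1}\rho = (p+1)\int |w|^{p-1}\operatorname{Re}(\bar w \partial_s w)\rho$ mirrors the scalar identity. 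Integrating in time and combining with a covering argument on cylinders $(-1,1)\times[s,s+1]$, together with Hardy-type inequalities against the weight $\rho$ and the non-characteristic condition (\ref{4}) used to absorb boundary fluxes, produces the uniform upper bound in (i). For the lower bound I would argue by contradiction: if $\|w_{x_0}(s_n)\|_{H^1(B)}+\|\partial_s w_{x_0}(s_n)\|_{L^2(B)}\to 0$ along some sequence, then the local Cauchy theory for (\ref{equa}) in $\mathcal{H}$ (inherited from the $H^1\times L^2$ wellposedness of (\ref{waveq})) gives existence on a uniform time interval $[s_n,s_n+\sigma_0]$; iterating and undoing the transformation (\ref{trans_auto}) extends $u$ beyond $T(x_0)$, contradicting the definition of the blow-up graph.

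Part (ii) then follows from (i) by a direct substitution in (\ref{trans_auto}), since the norms on the unit ball $B=\{|y|<1\}$ in the $(y,s)$-variables translate into the $(T(x_0)-t)$-weighted $L^2$ norms on $B(x_0,T(x_0)-t)$ stated in (ii); the dependence of $K$ on $T(x_0)$, $1/T(x_0)$, $\delta_0(x_0)$ and the initial data is tracked through finite speed of propagation and the Cauchy bound for $(u_0,u_1)$ in $H^1_{loc,u}\times L^2_{loc,u}$.

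The main obstacle here is essentially accounting rather than new mathematics: one must verify that every step of the Merle--Zaag covering argument is insensitive to the complex structure. This reduces to the $U(1)$-covariance of the nonlinearity $|u|^{p-1}u$, to the fact that $\mathcal{H}$ in (\ref{9}) is defined via $|\cdot|^2$ rather than componentwise, and to the observation that the weighted Hardy and trace inequalities in \cite{MZ05} apply verbatim to $\mathbb{C}$-valued functions. The genuinely new difficulty introduced by the complex setting -- the extra neutral direction coming from phase rotation -- only enters when upgrading this bound to convergence toward a stationary profile, which is the subject of the later sections of the paper, not of this proposition.
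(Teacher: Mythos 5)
Your overall plan is the right one: adapt the Merle--Zaag scheme from \cite{MZ05}, \cite{MZ2005}, \cite{MR2147056} and check that the complex structure changes nothing. This is exactly what the paper does (Appendix~\ref{__}), and your lower-bound argument and your translation of (i) into (ii) via (\ref{trans_auto}) match the paper's. You are also right that the extra neutral direction from phase rotation plays no role here.

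Two points in your upper-bound sketch are substantively off. First, the dissipation identity for $E$ has no boundary contributions at $y=\pm 1$: the weight $\rho(y)=(1-y^2)^{2/(p-1)}$ vanishes at the boundary and the identity is exactly $\frac{d}{ds}E=-\frac{4}{p-1}\int_{-1}^1|\partial_s w|^2\frac{\rho}{1-y^2}\,dy$ with nothing to absorb; the non-characteristic condition is not used for boundary fluxes but to make the covering argument of \cite{MR2147056} work when $T(x_0)\neq\min T$, since one must consider $w_{\bar x}$ for all nearby $\bar x$ with comparable cone heights. Second, and more importantly, monotonicity of $E$ alone yields no uniform bound: you also need the two-sided bound $0\le E(w(s))\le C_0$, and the lower bound $E\ge 0$ comes from the Antonini--Merle blow-up criterion (a solution of (\ref{equa}) with $E<0$ at some time cannot exist for all $s$). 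This criterion is the second pillar of the argument and is missing from your proposal. Finally, the paper's upper-bound argument has a specific three-step structure (multiply the equation by $\bar w\rho$ to obtain local space-time control; interpolate, Sobolev-embed and cover to bound $\int|w|^{p+1}$; work at the almost-maximizing point $\check x(s)$ of the gradient and invoke the Gagliardo--Nirenberg inequality (\ref{GNB}) together with the equivalence of weighted and unweighted $L^2$ gradient norms at that point), which your proposal collapses into ``Hardy-type inequalities and a covering argument.'' A complete proof must reproduce this structure; in particular the equivalence of norms step at $\check x(s)$ is what lets one pass from the weighted energy to the unweighted $H^1(B)$ norm in the statement.
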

\begin{proof}
The idea of the proof is the same as in \cite{MR2147056}. For the sake of completeness, we give in Appendix \ref{__} a sketch of the proof.
\end{proof}
With the bounds in Proposition \ref{Th}, we ask the question of compactness of the solution and the question of convergence of $w$ to a stationary solution of (\ref{equa}).
\subsection{Blow-up profile}
From now on, we assume again that
$$N=1.$$
This subsection is the heart of our work. Indeed, unlike for the blow-up rate, it is not a simple adaptation of the real case. It involves many new ideas of ours. The first step towards the determination of the blow-up profile is to characterize  all stationary solutions in $\mathcal{ H}_0$ of equation (\ref{equa}).
\begin{propp}\label{p1}{\bf (Characterization of all stationary solution of equation (\ref{equa}) in $\mathcal{ H}_0$).}
 (i) Consider $w \in \mathcal{H}_0$ a stationary solution of (\ref{equa}). Then, either $w\equiv 0$ or there exist $\bb \in(-1,1)$ and $\theta \in \mathbb{R}$ such that $w(y)=e^{i\theta} \kappa (\bb,y)$ where
\begin{eqnarray}\label{defk}
\forall (\bb, y) \in (-1,1)^2,\;\kappa(\bb,y)= \kappa_0 \frac{(1-\bb^2)^\frac{1}{p-1}}{(1+\bb y)^\frac{2}{p-1}} \mbox{ and }\kappa_0=\left(\frac{2(p+1)}{(p-1)^2}\right)^\frac{1}{p-1},
\end{eqnarray}
(ii) It holds that
\begin{equation}\label{14}
 E(0,0)=0 \mbox{ and }\forall \bb \in (-1,1),\,\forall \theta \in \mathbb R ,\, E(e^{i\theta}\kappa (\bb,\cdot),0)=E(\kappa_0,0)>0
\end{equation}
where $E$ is given by (\ref{15}).
\end{propp}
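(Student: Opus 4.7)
\emph{Plan for part (i): polar decomposition and conservation law.} Setting $\partial_s w = 0$ in (\ref{equa}), a stationary $w \in \mathcal{H}_0$ satisfies $\mathcal{L} w - \frac{2(p+1)}{(p-1)^2} w + |w|^{p-1} w = 0$. Writing $w = w_1 + i w_2$ and using that $\mathcal{L}$ and $|w|^{p-1}$ are real, I multiply the real part by $w_2$, the imaginary part by $w_1$ and subtract; the potential and nonlinear terms cancel, and in view of the divergence form (\ref{8}) of $\mathcal{L}$, this gives
\begin{equation*}
\frac{1}{\rho}\left[\rho(1-y^2)\left(w_2 w_1' - w_1 w_2'\right)\right]' = 0.
\end{equation*}
Hence the ``Wronskian'' $W := \rho(1-y^2)(w_1 w_2' - w_2 w_1')$ is constant on $(-1,1)$; this conservation law reflects the gauge symmetry $w \mapsto e^{i\alpha} w$ of the problem.

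\emph{Key step: $W \equiv 0$ via H\"older.} Suppose for contradiction $W \neq 0$. Then $w$ never vanishes on $(-1,1)$, so $w = R e^{i\phi}$ smoothly with $R > 0$, and the conservation reads $R^2\rho(1-y^2)\phi' = W$. Testing the stationary equation against $\bar{w}\rho$ and integrating by parts yields
\begin{equation*}
\int_{-1}^{1} R^{p+1}\rho\, dy = \int_{-1}^{1}\left(R'^2 + R^2\phi'^2\right)(1-y^2)\rho\, dy + \frac{2(p+1)}{(p-1)^2}\int_{-1}^{1} R^2\rho\, dy,
\end{equation*}
all three integrals being finite; substituting $R^2\phi'(1-y^2)\rho = W$ in the angular part gives in particular $I_1 := \int_{-1}^{1} R^{p+1}\rho\, dy < \infty$ and $I_2 := \int_{-1}^{1} dy/[R^2\rho(1-y^2)] < \infty$. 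Now I apply H\"older's inequality with conjugate exponents $(p+3)/2$ and $(p+3)/(p+1)$ to the identity
\begin{equation*}
\frac{1}{1-y^2} = \left(R^{p+1}\rho\right)^{2/(p+3)}\left(\frac{1}{R^2\rho(1-y^2)}\right)^{(p+1)/(p+3)},
\end{equation*}
which holds because the $R$-powers cancel and, using $\rho = (1-y^2)^{2/(p-1)}$, the $\rho$- and $(1-y^2)$-weights collapse to exactly $(1-y^2)^{-1}$. This produces
\begin{equation*}
\int_{-1}^{1}\frac{dy}{1-y^2} \;\le\; I_1^{2/(p+3)}\, I_2^{(p+1)/(p+3)} < \infty,
\end{equation*}
which is absurd since the left-hand side is infinite. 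Hence $W \equiv 0$.

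\emph{Reduction to the real case and conclusion of (i).} From $W \equiv 0$, the vectors $(w_1, w_2)$ and $(w_1', w_2')$ are parallel at every point, so the argument of $w$ is locally constant on $\{w \neq 0\}$. The ODE has smooth coefficients on compact subsets of $(-1,1)$, so $w(y_0) = w'(y_0) = 0$ would force $w \equiv 0$ by uniqueness; hence zeros of a nontrivial $w$ are isolated and the phase jumps by exactly $\pi$ across each one, i.e. corresponds to a sign change of a real-valued function. Thus $w = e^{i\theta} r$ with $\theta \in \mathbb{R}$ and $r: (-1,1) \to \mathbb{R}$ in $\mathcal{H}_0$ satisfying $\mathcal{L} r - \frac{2(p+1)}{(p-1)^2} r + |r|^{p-1} r = 0$. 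The classification of real-valued solutions in $\mathcal{H}_0$ of Merle--Zaag \cite{MR2362418} then gives $r \equiv 0$ or $r = \pm \kappa(\delta,\cdot)$ for some $\delta \in (-1,1)$, and the sign is absorbed into $\theta$.

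\emph{Part (ii).} The functional $E(\cdot, 0)$ depends on $w$ only through $|w|$ and $|w'|$, so the phase factor $e^{i\theta}$ drops out and $E(e^{i\theta}\kappa(\delta,\cdot), 0) = E(\kappa(\delta,\cdot), 0)$. The $\delta$-independence is the real statement from \cite{MR2362418}; it follows conceptually from the fact that $y \mapsto (y+\delta)/(1+\delta y)$ (the self-similar shadow of a Lorentz boost of (\ref{waveq})) sends $\kappa(\delta,\cdot)$ to $\kappa_0$ and leaves the energy integrand invariant. Positivity follows from a direct computation using $\kappa_0^{p-1} = 2(p+1)/(p-1)^2$, giving $E(\kappa_0, 0) = \frac{\kappa_0^2}{p-1}\int_{-1}^{1}\rho\, dy > 0$. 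The heart of the whole argument is the H\"older step above: only the specific exponents $(p+3)/2$ and $(p+3)/(p+1)$ force the integrand down to the borderline non-integrable weight $(1-y^2)^{-1}$, and this precise cancellation is what rules out $W \neq 0$.
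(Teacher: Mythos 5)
Your proof is correct, and it takes a genuinely different route from the paper for the key step (showing the angular momentum vanishes). The paper first performs the change of variable $\xi=\atang\, y$, $v(\xi)=w(y)(1-y^2)^{1/(p-1)}$, converting the equation to a constant-coefficient complex ODE on $\mathbb{R}$ with $v\in H^1(\mathbb{R})$; it then observes that $R^2\theta'$ is conserved, and that if this constant is nonzero then the energy $\mathcal{E}(\xi)$ forces $|v|\ge \epsilon_0>0$ everywhere, contradicting the $L^2(\mathbb{R})$ decay coming from $H^1(\mathbb{R})$. You instead stay in the original variable $y\in(-1,1)$, extract the same conservation law as a Wronskian $W=\rho(1-y^2)(w_1w_2'-w_2w_1')$, and rule out $W\ne 0$ by a sharp H\"older inequality: the exponents $\left(\tfrac{p+3}{2},\tfrac{p+3}{p+1}\right)$ make the powers of $R$ cancel and, thanks to $\rho=(1-y^2)^{2/(p-1)}$, collapse the weights to exactly $(1-y^2)^{-1}$, giving $\int_{-1}^1\frac{dy}{1-y^2}\le I_1^{2/(p+3)}I_2^{(p+1)/(p+3)}<\infty$, absurd. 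Both routes then invoke ODE uniqueness to factor out a constant phase and cite the real classification. Your argument buys a proof that never leaves $(-1,1)$ and trades the transformed ODE's energy analysis for a one-line interpolation; the paper's transformation buys automatic pointwise decay (via $H^1(\mathbb{R})\hookrightarrow C^{1/2}$), which sidesteps the boundary-term issue in the integration by parts you perform when testing against $\bar{w}\rho$ — a point you gloss over but which can be handled by truncation plus $\liminf_{y\to\pm1}(1-|y|)\rho(1-y^2)|w'||w|=0$. Note also that, given $W\ne 0$, the finiteness of $I_1$ and $I_2$ already follows directly from $w\in\mathcal{H}_0$ and the Hardy--Sobolev embedding, so the energy identity in your write-up is not strictly needed. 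Your part (ii) matches the paper.
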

\noindent {\bf Remark:} Note that the proof of this proposition is very different from the real case. Indeed, in the real case, the result follows from a transformation of the hyperbolic plane, which gives nothing in the complex case. We succeed in proving the result relying on ODE techniques for complex-valued equation.\\
{\bf Remark:} Unlike the real case where the set of stationary solutions is made of 3 connected components: $\{0\}$, $\{+\kappa(\bb ,y)\}$ and $\{-\kappa(\bb ,y)\}$, we have only two connected components: $\{0\}$ and $\{e^{i\theta}\kappa(\bb ,y)\,|\, \theta \in \mathbb R, |\bb|<1\}$. This is one of the novelties of our approach. Indeed, we need here a modulation technique to control the parameter $\theta$ which may take any real value, unlike the real case, where it was equal to $k\pi$ only.

\medskip
The second step is the same as in the real case, and involves no novelty on our behalf. It uses the Lyapunov functional to show that when $x_0$ is non-characteristic, then $w_{x_0}$ approaches the set of non-zero stationary solutions. This is the result:
\begin{propp}\label{2}{\bf (Approaching the set of non-zero stationary solutions near a non-characteristic point)}
 Consider $u$ a solution of (\ref{waveq}) with blow-up curve $\Gamma :\{ x \rightarrow T(x)\}.$ If $x_0 \in \mathbb R$ is non-characteristic, then:
\item{(A.i)} $\inf_{\{\theta \in \mathbb R,\; |\bb|<1\}}||w_{x_0}(\cdot  ,s)-e^{i\theta} \kappa(\bb ,\cdot)||_{H^1(-1,1)}+||\partial_s w_{x_0}||_{L^2(-1,1)} \rightarrow  0$ as $s  \rightarrow \infty$.
\item{(A.ii)} $E(w_{x_0}(s),\partial_s w_{x_0}(s))\rightarrow E(\kappa_0,0)$ as $s \rightarrow \infty.$
\end{propp}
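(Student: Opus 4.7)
The plan is a standard Lyapunov--compactness--classification argument, relying crucially on the two inputs already established: Proposition \ref{Th}, which gives uniform $\mathcal H$-bounds (both from above and, non-trivially, from below by $\epsilon_0$), and Proposition \ref{p1}, which classifies the stationary solutions of (\ref{equa}) in $\mathcal H_0$. The statement just before this proposition asserts that $E$ is a Lyapunov functional, so one has a dissipation identity of the schematic form
\begin{equation*}
\frac{d}{ds}E(w(s),\partial_s w(s))=-c\int_{-1}^{1}|\partial_s w|^2\,(1-y^2)^{-1}\rho\,dy\le 0,
\end{equation*}
valid for the complex-valued equation by the same computation as in Antonini--Merle. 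Combined with the upper bound in Proposition \ref{Th}(i), this shows that $E(w_{x_0}(s),\partial_s w_{x_0}(s))$ is non-increasing and bounded below, hence converges to a limit $E_\infty$, and moreover
\begin{equation*}
\int_{-\log T(x_0)}^{+\infty}\!\!\int_{-1}^{1}|\partial_s w_{x_0}|^2\,(1-y^2)^{-1}\rho\,dy\,ds<+\infty.
\end{equation*}

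Next I would fix an arbitrary sequence $s_n\to+\infty$ and argue by compactness. The time-integrability above forces the existence of a subsequence (still denoted $s_n$) along which $\|\partial_s w(s_n)\|_{L^2_\rho}\to 0$. By Proposition \ref{Th}, $w(s_n)$ is bounded in $\mathcal H_0$, hence up to a further subsequence it converges weakly in $\mathcal H_0$ and strongly in $L^{p+1}_\rho(-1,1)$ (via the compact embedding $H^1_{loc}(-1,1)\hookrightarrow L^{p+1}_{loc}(-1,1)$ together with the weight $\rho$ vanishing at $y=\pm 1$, which tames the contribution near the boundary). Passing to the limit in the weak formulation of (\ref{equa}) on a compact time window $[s_n,s_n+1]$, and using $\partial_s w\to 0$ in an averaged sense, shows that the limit $w_\infty$ is a stationary solution of (\ref{equa}) in $\mathcal H_0$. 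By Proposition \ref{p1}(i), $w_\infty\equiv 0$ or $w_\infty(y)=e^{i\theta}\kappa(\bb,y)$ for some $\theta\in\mathbb R$ and $\bb\in(-1,1)$; the lower bound $\epsilon_0$ in Proposition \ref{Th}(i) excludes the trivial case provided the convergence is strong enough in $H^1$, which follows by combining the weak convergence with convergence of the energy-type norm (itself a consequence of strong $L^{p+1}_\rho$ convergence of $w(s_n)$ and $L^2$ convergence of $\partial_s w(s_n)$ to $0$).

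To deduce (A.i), I would argue by contradiction: if the infimum did not tend to zero, one could find $\epsilon>0$ and a sequence $s_n\to\infty$ for which
\begin{equation*}
\inf_{\theta,\bb}\|w(s_n)-e^{i\theta}\kappa(\bb,\cdot)\|_{H^1(-1,1)}+\|\partial_s w(s_n)\|_{L^2(-1,1)}\ge\epsilon,
\end{equation*}
which directly contradicts the subsequence conclusion above. This also requires upgrading $\|\partial_s w(s_n)\|_{L^2}\to 0$ along the \emph{full} sequence, not merely a subsequence; one does this in the standard way, using (\ref{equa}) to bound $\partial_s^2 w$ in a negative-index norm and converting the $L^2_{s,y}$-integrability of $\partial_s w$ into pointwise-in-$s$ decay by an equicontinuity argument. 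Finally, (A.ii) is immediate: along the subsequence extracted above, $E(w(s_n),\partial_s w(s_n))\to E(e^{i\theta}\kappa(\bb,\cdot),0)=E(\kappa_0,0)$ by Proposition \ref{p1}(ii), and since $E(w(s),\partial_s w(s))$ is monotone in $s$ its full limit equals $E(\kappa_0,0)$.

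The main obstacle is the strong-convergence issue: weak convergence in $\mathcal H_0$ alone is not enough either to pass the nonlinearity $|w|^{p-1}w$ to the limit in (\ref{equa}) or to transfer the lower bound $\epsilon_0$ to $w_\infty$, so one must carefully exploit the dissipation and the boundary weight $\rho(y)=(1-y^2)^{2/(p-1)}$ to upgrade to strong $H^1_{\rho}$ convergence. Note that the modulation technique announced in the introduction is \emph{not} needed here: Proposition \ref{2} only claims approach to the full manifold $\{e^{i\theta}\kappa(\bb,\cdot)\}$, not convergence to a single member of it, and the latter finer statement, where the extra neutral $\theta$-direction becomes a true obstacle, is the subject of the subsequent trapping analysis.
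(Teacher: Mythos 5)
Your Lyapunov--compactness--classification argument is exactly the approach the paper takes in its sketch in Appendix A (and in the Merle--Zaag proof that both defer to): dissipation of $E$, time-integrability of $\partial_s w$, approach to the classified stationary set of Proposition \ref{p1}, and exclusion of the zero limit via the lower bound of Proposition \ref{Th}. One caveat worth flagging: the upgrade to strong $H^1\times L^2$ convergence, which you correctly identify as the main obstacle, is obtained in Merle--Zaag not by the ``convergence of $E$ plus strong $L^{p+1}_\rho$'' route you sketch (which risks circularity, since it presupposes the value of $E_\infty$ that (A.ii) is supposed to establish) but rather via the Duhamel formulation of equation (\ref{equa}) in self-similar variables together with finite speed of propagation and a covering argument in $x$, which is what actually produces the required compactness in the unweighted $H^1\times L^2$ norm.
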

\begin{proof}
The idea of the proof is the same as in \cite{MR2362418}. For the sake of completeness, we give in Appendix \ref{__} a sketch of the proof.
\end{proof}
From this result, we wonder whether $\theta$ and $\bb$ have limits as $s\rightarrow \infty$, in this words, whether $w_{x_0}(\cdot,s)$ converges to some $e^{i\theta_\infty (x_0)}\kappa(\bb _\infty (x_0))$ for some $\theta_\infty (x_0)\in \mathbb{R}$ and $ |\bb_\infty (x_0)|<1$. The answer is in fact positive, as one sees in Theorem \ref{theo4} below that the following trapping result of solutions of equation (\ref{equa}) near non-zero stationary solutions, is a major tool towards this result.

 In the following, we consider $w \in C([s^*,\infty),\mathcal{ H})$ and show that if $w(s^*)$ is close enough to some non-zero stationary solution and satisfies an energy barrier, then $w(s)$ converges to a neighboring stationary solution as $s \rightarrow \infty$.
\begin{theorr}\label{theo3}{\bf (Trapping near the set of non-zero stationary solutions of (\ref{equa}))} There exist positive $\epsilon_0$, $\mu_0$ and $C_0$ such that if $w\in C([s^*,\infty),\mathcal{H})$ for some $s^*\in \mathbb{R}$ is a solution of equation (\ref{equa}) such that
 \begin{eqnarray}\label{17}
  \forall s \ge s^*, E(w(s),\partial_s w(s)) \ge E(\kappa_0,0),
 \end{eqnarray}
and
 \begin{eqnarray}\label{18}
\Big|\Big|\begin{pmatrix} w(s^*)\\\partial_s w(s^*) \end{pmatrix} -e^{i \theta^*}\begin{pmatrix} \kappa(\bb ^*,\cdot)\\0\end{pmatrix} \Big|\Big|_{\mathcal{ H}}\le \epsilon^*
 \end{eqnarray}
for some $\bb^* \in (-1,1), \theta^* \in \mathbb{R}$ and $\epsilon^*\in (0,\epsilon_0]$, 
then there exists $\bb_{\infty} \in (-1,1)$ and $\theta_\infty\in \mathbb{R}$ such that
$$|\atang \,\bb_{\infty}-\atang \,                                                                                                                                                                                                                                                                                                                                                  \bb^*|+|\theta_\infty-\theta^*|\le C_0 \epsilon^* $$
and for all $s\ge s^*$:
 \begin{eqnarray}\label{19}
\Big|\Big|\begin{pmatrix} w(s)\\\partial_s w(s) \end{pmatrix} -e^{i \theta_{\infty}}\begin{pmatrix} \kappa(\bb _\infty,\cdot)\\0\end{pmatrix} \Big|\Big|_{\mathcal{ H}}\le C_0\epsilon^* e^{-\mu_0(s-s^*)}.
 \end{eqnarray}
\end{theorr}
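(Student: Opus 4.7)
My plan is to adapt the linearization-and-modulation strategy of Merle--Zaag \cite{MR2362418} for the real case, the essential new feature being that the stationary set is now a two-dimensional family parametrized by $(\theta,\bb)$, which forces a two-parameter modulation in order to quotient out both neutral directions of the linearization. Writing (\ref{equa}) as a first-order system $\partial_s\mathbf{w}=F(\mathbf{w})$ on $\mathcal{H}$, the stationary set $\{e^{i\theta}(\kappa(\bb,\cdot),0)\}$ is a smooth 2-dimensional submanifold whose tangent space at each point is spanned by $T_\theta=(i\kappa(\bb,\cdot),0)$ and $T_\bb=(\partial_\bb\kappa(\bb,\cdot),0)$. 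For $\epsilon_0$ small enough, an implicit-function argument produces unique $C^1$ modulation parameters $\theta(s),\bb(s)$ such that, in the ansatz
\[
\mathbf{w}(s)=e^{i\theta(s)}\Bigl(\begin{pmatrix}\kappa(\bb(s),\cdot)\\0\end{pmatrix}+\mathbf{q}(s)\Bigr),
\]
the remainder $\mathbf{q}\in\mathcal{H}$ is orthogonal, in the inner product adapted to the linearized flow, to both $T_\theta$ and $T_\bb$ at the current value of $\bb(s)$. The orthogonality against $T_\theta$ is the new condition arising from the $U(1)$ phase invariance.

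Next I would carry out the spectral analysis of the linearization of $F$ at $(\kappa(\bb,\cdot),0)$. Because $|w|^{p-1}w$ is differentiated at a real profile, it decouples into its action on the real and imaginary parts of the perturbation: on the real component one recovers the operator analyzed in \cite{MR2362418}, with spectrum consisting of a simple unstable eigenvalue $\lambda_+>0$, a zero eigenvalue with eigenfunction $\partial_\bb\kappa$, and the rest in $\{\mathrm{Re}\,\lambda\le-2\mu_0\}$; on the imaginary component one obtains the analogous operator with $\kappa^{p-1}$ in place of $p\kappa^{p-1}$, for which $\kappa(\bb,\cdot)$ itself is null (the phase direction). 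I would verify by a Sturm--Liouville argument --- in the same spirit as Proposition \ref{p1} --- that no additional non-negative eigenvalue appears on the imaginary side. Once this holds, the two orthogonality conditions imposed by the modulation kill exactly the two-dimensional kernel of the linearization, so the dynamics of $\mathbf{q}$ is governed by an operator having a single unstable direction and a strictly contracting complement.

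Projecting the equation for $\mathbf{q}$ onto the null directions and using the orthogonality constraints yields modulation ODEs of the form
\[
|\theta'(s)|+\frac{|\bb'(s)|}{1-\bb(s)^2}\le C\,\|\mathbf{q}(s)\|_{\mathcal{H}}^2,
\]
while the unstable-mode amplitude $\alpha_+$ satisfies $\alpha_+'(s)=\lambda_+\alpha_+(s)+O(\|\mathbf{q}\|_{\mathcal{H}}^2)$. Expanding the Lyapunov functional $E$ about the modulated stationary solution gives, modulo cubic terms, $E(\mathbf{w}(s))-E(\kappa_0,0)=-c_+\alpha_+(s)^2+\tilde Q(\mathbf{q}^\perp(s))$, with $\tilde Q$ coercive on the complement of the unstable and neutral modes. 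The energy barrier (\ref{17}) then forces $\alpha_+(s)^2\lesssim\|\mathbf{q}^\perp(s)\|_{\mathcal{H}}^2$, which, combined with the Antonini--Merle dissipation of $E$ and a Duhamel/Gronwall argument for the equation on $\mathbf{q}$, yields through a bootstrap initialized from (\ref{18}) the exponential decay $\|\mathbf{q}(s)\|_{\mathcal{H}}\le C\epsilon^*e^{-\mu_0(s-s^*)}$ for all $s\ge s^*$. Integrating the modulation ODEs --- in which the hyperbolic arclength $\atang\,\bb$ naturally absorbs the factor $(1-\bb^2)^{-1}$ --- finally gives convergent limits $\theta_\infty,\bb_\infty$ satisfying $|\atang\,\bb_\infty-\atang\,\bb^*|+|\theta_\infty-\theta^*|\le C_0\epsilon^*$, which together with the decay of $\mathbf{q}$ is exactly (\ref{19}).

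The main obstacle, specific to the complex case, is the spectral step: proving that the imaginary-part linearization with coefficient $\kappa^{p-1}$ contributes no unstable and no extra zero eigenvalue beyond $\kappa(\bb,\cdot)$, and that the quadratic form $\tilde Q$ is coercive uniformly as $|\bb(s)|\to 1$. This is precisely where the complex structure forces genuinely new work beyond \cite{MR2362418}, and it also motivates the use of the hyperbolic variable $\atang\,\bb$ in the statement.
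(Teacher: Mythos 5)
Your plan is essentially the paper's strategy: two-parameter modulation in $(\theta,\bb)$ to quotient out the two neutral directions $(i\kappa,0)$ and $(\partial_\bb\kappa,0)$, spectral decomposition of the linearization splitting into a real block (identical to the real-case operator $\check L_\bb$, with unstable eigenvalue $\lambda=1$ and neutral $\lambda=0$) and an imaginary block $\tilde L_\bb$ with potential $\kappa^{p-1}$ whose only nonnegative direction is $(\kappa,0)$, followed by control of the modulation parameters quadratically in $\|q\|_\mathcal{H}$, the energy-barrier bound on the unstable mode, and exponential decay feeding into the convergence of $\theta(s),\bb(s)$.

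The one place where you diverge technically is the imaginary-block spectral step and the subsequent decay mechanism, and your choice, while plausible, is less well adapted than the paper's. You propose a Sturm--Liouville argument on the $\xi$-line Schr\"odinger operator $\partial_\xi^2-c_0+\check k^{p-1}$ to rule out non-negative spectrum beyond the phase mode, and then a Duhamel/semigroup-contraction argument on the stable complement. The paper instead never enumerates the spectrum of $\tilde L_\bb$ (which is a non-self-adjoint $2\times 2$ system, so ``strictly contracting complement'' is not immediate); it directly proves coercivity of a $\bb$-dependent quadratic form $\tilde\varphi_\bb$ on the codimension-one subspace killed by the modulation (Proposition~\ref{4.7}), using the Lorentz transform $\mathcal{T}_\bb$ to reduce the operator $\mathcal{L}+\tilde\psi(\bb,\cdot)$ uniformly in $\bb$ to the flat Hermite operator $\mathcal{L}$, whose spectral gap is known (Lemma~\ref{too}). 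The decay then comes not from a linear semigroup estimate but from a Lyapunov-type differential inequality (estimate~(\ref{186})) that incorporates a cubic correction $R_-$ capturing the real--imaginary coupling through $\mathcal{F}_\bb(q_1)$; this is precisely what absorbs the non-self-adjointness and makes the argument closable. Your approach would have to supply a uniform-in-$\bb$ coercivity statement and a nonlinear estimate on the coupling $\int\check q_2\check f_\bb+\int\tilde q_2\tilde f_\bb$ to replace this, which is achievable but would in effect reconstruct the same machinery; the quadratic-form route buys uniformity in $\bb$ almost for free and sidesteps exponential dichotomy for the non-self-adjoint linearized group.
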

\noindent{\bf Remark:} Condition (\ref{17}) is crucial for the conclusion. Indeed, if (\ref{18}) is satisfied but not (\ref{17}), we may have a different conclusion, as with the explicit solution $w(y,s)=\kappa_0 \frac{(1-\bb^2)^\frac{1}{p-1}}{(1+\mu e^s+\bb y)^\frac{2}{p-1}}$ which may be made arbitrarily close to $\kappa(\bb ,y)$ and satisfies convergences to $0$ as $s\rightarrow +\infty$.\\
As we said earlier, the third step towards the derivation of the blow-up profile simply uses Proposition \ref{2} and Theorem \ref{theo3} to get get the following:
\begin{theorr}\label{theo4}{\bf (Blow-up profile near a non-characteristic point)}
 If $u$ is a solution of (\ref{waveq}) with blow-up curve $\Gamma :\{ x \rightarrow T(x)\}$ and $x_0 \in \mathbb R$ is non-characteristic (in the sense (\ref{4})), then there exist $\bb_\infty (x_0) \in (-1,1),\, \theta_\infty (x_0) \in \mathbb{R}$ and $s^*(x_0) \ge -\log T(x_0)$ such that for all $s\ge s^*(x_0),$ (\ref{19}) holds with $\epsilon^*=\epsilon_0$, where $C_0$ and $\epsilon_0$ are given in Theorem \ref{theo3}. Moreover,
 \begin{equation*}
||w_{x_0}(s)-e^{i\theta_\infty(x_0)} \kappa(\bb _\infty (x_0))||_{H^1(-1,1)}+||\partial_s w_{x_0}(s)||_{L^2(-1,1)} \rightarrow  0 \mbox{ as } s  \rightarrow \infty.
  \end{equation*}
\end{theorr}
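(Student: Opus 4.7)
The plan is to combine Proposition~\ref{2} with Theorem~\ref{theo3}: Proposition~\ref{2} places $(w_{x_0}(s),\partial_s w_{x_0}(s))$ arbitrarily close to the family of non-zero stationary solutions (with energy decaying down to $E(\kappa_0,0)$), while Theorem~\ref{theo3} turns such a closeness property into genuine exponential convergence to one particular limit. All that needs checking is that both hypotheses of Theorem~\ref{theo3} hold at a suitably large time $s^*(x_0)$.

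For the energy barrier~(\ref{17}): by Antonini--Merle~\cite{MR1861514}, whose argument extends to the complex case as noted just after (\ref{15}), $E$ is a Lyapunov functional for equation~(\ref{equa}), so the map $s\mapsto E(w_{x_0}(s),\partial_s w_{x_0}(s))$ is non-increasing; by Proposition~\ref{2}(A.ii) it converges to $E(\kappa_0,0)$. A non-increasing function converging to $E(\kappa_0,0)$ must remain above it, so (\ref{17}) is automatic on the whole interval $[-\log T(x_0),\infty)$. For the closeness condition~(\ref{18}): since the weight $\rho$ is bounded on $(-1,1)$, the $\mathcal{H}$-norm is dominated by the $H^1\times L^2$-norm, hence Proposition~\ref{2}(A.i) yields also $\mathcal{H}$-closeness to the stationary family, and we may pick $s^*(x_0)$ large enough, along with parameters $\theta^*\in\mathbb{R}$ and $\bb^*\in(-1,1)$, such that (\ref{18}) holds with $\epsilon^*=\epsilon_0$. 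Applying Theorem~\ref{theo3} at $s^*(x_0)$ then delivers $\theta_\infty(x_0)\in\mathbb{R}$ and $\bb_\infty(x_0)\in(-1,1)$ for which (\ref{19}) holds for every $s\ge s^*(x_0)$.

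It remains to upgrade the $\mathcal{H}$-convergence from (\ref{19}) to the unweighted $H^1(-1,1)\times L^2(-1,1)$ convergence stated in the last display of the theorem. Because $\rho(y)=(1-y^2)^{2/(p-1)}$ degenerates at $y=\pm 1$, this does not follow formally from (\ref{19}). I would combine it with the uniform bound in Proposition~\ref{Th}(i): on any compact subinterval $[-1+\eta,1-\eta]$ the weight $\rho$ is bounded below, so the exponential $\mathcal{H}$-decay directly implies $H^1\times L^2$-decay there, and a boundary-layer argument exploiting the uniform $H^1$ bound together with the explicit decay of $\kappa(\bb_\infty,\cdot)$ at $y=\pm 1$ from (\ref{defk}) handles the remaining tails by letting $\eta\to 0$. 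I expect this boundary-layer step to be the only minor technical point requiring care; it parallels the real case treated in~\cite{MR2362418}, and is softer than the rest of the argument since we only need qualitative convergence (not the exponential rate) in the unweighted norm.
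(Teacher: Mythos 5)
Your argument for the main conclusion exactly mirrors the paper's proof of Theorem~\ref{theo4}: check the energy barrier~(\ref{17}) via the monotonicity of $E$ (Proposition~\ref{2.1}) together with Proposition~\ref{2}(A.ii), check the closeness hypothesis~(\ref{18}) via Proposition~\ref{2}(A.i) and the pointwise bound $0\le\rho\le 1$, then apply Theorem~\ref{theo3}. This part is correct and identical in spirit to the paper.

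Your treatment of the final \emph{moreover} statement, however, has a concrete flaw. You invoke ``the explicit decay of $\kappa(\bb_\infty,\cdot)$ at $y=\pm1$ from~(\ref{defk})'', but for fixed $\bb_\infty\in(-1,1)$ the profile $\kappa(\bb_\infty,y)=\kappa_0(1-\bb_\infty^2)^{1/(p-1)}(1+\bb_\infty y)^{-2/(p-1)}$ tends to \emph{nonzero} values as $y\to\pm1$; there is no boundary decay of $\kappa$ to exploit. Moreover, a uniform $H^1(-1,1)$ bound on $w_{x_0}(s)$ (Proposition~\ref{Th}(i)) does not control the $H^1$ mass carried near $y=\pm1$ — uniform boundedness gives no tightness, so the boundary layer does not vanish ``by letting $\eta\to0$'' from that information alone. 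As written, this step would not close. A cleaner route, which is what the paper implicitly relies on, is to use Proposition~\ref{2}(A.i) again: it already provides $\theta(s),\bb(s)$ with $\|w_{x_0}(s)-e^{i\theta(s)}\kappa(\bb(s),\cdot)\|_{H^1}+\|\partial_s w_{x_0}\|_{L^2}\to 0$, and combined with~(\ref{19}) (and the lower/upper bounds on $\|\kappa(\bb,\cdot)\|_{\mathcal{H}_0}$ in~(\ref{majoration1})), one deduces that $\theta(s)\to\theta_\infty \pmod{2\pi}$ and $\bb(s)\to\bb_\infty$, whence $\|e^{i\theta(s)}\kappa(\bb(s),\cdot)-e^{i\theta_\infty}\kappa(\bb_\infty,\cdot)\|_{H^1}\to0$ and the \emph{moreover} conclusion follows by the triangle inequality — no boundary layer estimate on $w$ itself is needed.
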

\noindent{\bf Remark:} From the Sobolev embedding, we know that the convergence takes place also in $L^\infty$, in the sense that
$$||w_{x_0}(s)-e^{i\theta_\infty(x_0)} \kappa(\bb _\infty (x_0))||_{L^\infty(-1,1)} \rightarrow  0 \mbox{ as } s  \rightarrow \infty.$$
\noindent{\bf Remark:} As we mentioned above one of our difficulties comes from the invariance of the solution under complex rotation, which induces an additional zero-mode in the linearization of equation (\ref{equa}) around $\kappa(\bb ,y)$. In order to overcome that difficulty, we use a modulation technique (see Proposition \ref{5.1} below). Let us mention that the extension from the real to complex case has been successfully performed by Filippas and Merle \cite{MR1317705} in the case of the semilinear heat equation with Sobolev subcritical nonlinearity:
\begin{equation} \label{chaleur}
\partial_{t} u = \Delta u+|u|^{p-1}u, \mbox{ with }p>1\mbox{ and }(N-2)p<N+2.
\end{equation} 
We mention that that the modulation technique was already crucial in \cite{MR1317705} to control the additional zero-mode coming from the invariance of equation (\ref{chaleur}) under complex rotation. Note however that the adaptation from the real to the complex case for the wave equation is far more difficult, since we have additional problems, coming from the fact that we have to handle non self-adjoint operators.

\bigskip
This paper is organized as follows:\\
 In Section 2, we
 characterize the set of stationary solutions, proving Proposition \ref{p1}.\\
In Section 3, we study the properties of the linearized operator of equation (\ref{equa}) around a non-zero stationary solution.\\
Finally, Section 4 is devoted to the proof of Proposition \ref{2}, Theorem \ref{theo3} and Theorem \ref{theo4}.\\

\section{Characterization of the stationary solutions in self-similar variables}
In this section, we prove Proposition \ref{p1} which characterizes all $\mathcal{H}_0$ solutions of
\begin{equation}
 \frac{1}{\rho} (\rho (1-y^2) w')'-\frac{2(p+1)}{(p-1)^2}w+|w|^{p-1}w=0,\label{47}
\end{equation}
the stationary version of (\ref{equa}). As our solution is a complex-valued one, we will use in addition to the techniques of Section 2.3 in \cite{MR2362418}, other techniques as the determination of the phase and some projections. Note that since $0$ and $\kappa_0 e^{i\theta}$ are trivial solutions to equation (\ref{equa}) for any $\theta \in \mathbb R$, we see from Lemma \ref{2.6}
 that $\mathcal{T}_\bb  (e^{i\theta}\kappa_0)=e^{i\theta} \kappa(\bb ,y)$ is also a stationary solution to (\ref{equa}). Let us introduce the set
\begin{equation}\label{48}
 S\equiv \{0, e^{i\theta} \kappa (\bb ,\cdot), |\bb|<1, \theta \in \mathbb{R}\}
\end{equation}
and prove that there are no more solutions of (\ref{47}) in $\mathcal{H}_0$ outside the set $S$.
\begin{proof} We first prove $(ii)$, since its proof is short.\\
(ii) Since we clearly have from the definition (\ref{15}) that $E(0,0)=0$, we will compute $E(e^{i\theta}\kappa(\bb ,\cdot),0)$. 
From (\ref{15}) and the proof of the real case page 59 in \cite{MR2362418}, we see that
$$ E(e^{i\theta}\kappa(\bb ,\cdot),0)= E(\kappa(\bb ,\cdot),0)=E(\kappa_0,0)>0.$$
Thus, (\ref{14}) follows.\\
(i) Consider $w \in \mathcal{H}_0$ a non-zero solution of (\ref{47}). Let us prove that there are some $\bb \in (-1,1)$ and $\theta \in \mathbb{R}$ such that $w=e^{i\theta}\kappa(\bb ,\cdot).$ For this purpose, define
\begin{equation}\label{50}
 \xi=\frac{1}{2} \log\left(\frac{1+y}{1-y}\right) (\mbox{that is }  y=\tanh \xi)\mbox{ and } v( \xi)=w(y) (1-y^2)^\frac{1}{p-1}. 
\end{equation}
As in the real case, we see from straightforward calculations that $v\not\equiv 0$ is a $H^1(\mathbb{R})$ solution to
\begin{equation}\label{51}
 \partial_\xi^2 v +|v|^{p-1}v-\frac{4}{(p-1)^2}v=0,\,\forall \xi \in \mathbb{R}.
\end{equation}
\noindent Our aim is to prove the existence of $\theta_0 \in \mathbb{R}$ and $\xi_0 \in \mathbb{R}$ such that $v(\xi)=e^{i\theta_0}\check {k} (\xi+\xi_0)$ where 
$$\check {k}(\xi)=\frac{\kappa_0}{\cosh^\frac{2}{p-1}(\xi)}.$$
Since $v \in H^1(\mathbb R)  \subset C^\frac{1}{2}(\mathbb R),$ we see that $v$ is a strong $C^2$ solution of equation (\ref{51}). Since $v\not\equiv  0$, there exists $\xi_0 \in \mathbb{R}$ such that $v(\xi_0)\neq 0$. By invariance of (\ref{51}) under translation, we may suppose that $\xi_0=0$.
Let
 \begin{equation}\label{*}
  \g^*=\left\{ \xi \in \mathbb{R} \,|\, \check w(\xi)\neq 0 \right\},
 \end{equation}
 a nonempty open set by continuity. Note that $\g^*$ contains some non empty interval $I$ containing 0. We also introduce $\rr(\xi)=|v(\xi)|$, $\theta$ and $ \check \theta$ two determinations of the phase given by
 \begin{equation}\label{la}
\theta(\xi)=\arctan \left(\frac{ \operatorname{Im} v(\xi)}{ \operatorname{Re} v(\xi)} \right) \mbox{ and } \check\theta(\xi)=\mbox{arccotan}\left(\frac{ \operatorname{Re} v(\xi)}{ \operatorname{Im} v(\xi)}\right),
 \end{equation}
and $h:\g^* \to \mathbb{R}$ given by 
\begin{equation}
\forall \; \xi \in \g^*, \,h(\xi)=
\left\{
\begin{array}{l}
\theta'(\xi),\, \mbox{if }  \operatorname{Re} v(\xi)\neq 0,\\
\check \theta'(\xi),\, \mbox{if }  \operatorname{Im} v(\xi)\neq 0.
\end{array}
\right . \label{h}
\end{equation}
\noindent We claim that $h$ is well defined and that $h\in C^1(\g^*)$. Indeed, let $\xi_0 \in \mathbb{R}$ such that $v(\xi_0)\neq 0$. Necessarily, either its real or its imaginary part is nonzero. If for instance $\operatorname{Re} v(\xi_0 )\neq 0$, by continuity
$$\exists \delta_0>0,\, \forall  |\xi-\xi_0|<\delta_0, \,\operatorname{Re} \check w(\xi)\neq 0,$$
so $\theta$ is well defined in $(\xi_0 - \delta_0,\xi_0+\delta_0), $ and  $h$ is well defined and $C^1$ in $(\xi_0 - \delta_0,\xi_0+\delta_0)$. Now, if $\operatorname{Im} v(\xi_0 )\neq 0 $, by the same way, we prove that $h$ given by $h(\xi)=\check \theta'(\xi)$ is well defined and $C^1$ in a small interval $(\xi_0 - \delta_0,\xi_0+\delta_0) $.\\
\noindent This definition is nonambiguous. Indeed, if ever both $\theta$ and $\check \theta$ are defined on the same interval $(a,b)$ with $a<b$, then there exists $k \in \mathbb {Z}$ such that
$$\forall \xi \in (a,b),\, \theta (\xi)=\check \theta (\xi)+2k\pi.$$
Differentiating this, we get
$$\forall \xi \in (a,b),\, \theta' (\xi)=\check \theta' (\xi).$$
Thus, (\ref{h}) defines $h(\xi)$ with no ambiguity.\\
Take $\xi \in \g^*$. Using one of the angle determination in (\ref{la}) and
projecting equation (\ref{51}), 
 we see that
 \begin{equation}\label{mer}
 \forall \xi \in \g^*,\,\left\{
\begin{array}{l}
 \rr''(\xi)-\rr (\xi)h^2(\xi)-c_0\rr(\xi)+\rr^p(\xi)=0,\; c_0= \frac{4}{(p-1)^2}\\
2\rr'(\xi)h(\xi)+\rr(\xi) h'(\xi)=0.
\end{array}
 \right . 
\end{equation}
Integrating the second equation on the interval $I\subset \g^*$, we see that for all $\xi \in I$, $h(\xi)=\frac{h(0)\rr^2(0)}{\rr^2(\xi)}$. Plugging this in the first equation, we get
 \begin{equation}\label{stka}
 \forall \xi \in I,\,\rr''(\xi)-\frac{\mu}{\rr^3(\xi)}-c_0\rr(\xi)+\rr^p(\xi)=0 \mbox{ where }\mu=h^2(0)\rr^4(0).
 \end{equation}
\noindent Now let
 \begin{eqnarray}\label{waw}
 \g= \left\{ \xi \in \g^*, \forall \xi'\in I_\xi,\, h(\xi')=\frac{h(0)\rr^2(0)}{\rr^2(\xi')} \right\},
 \end{eqnarray}
where $I_\xi=[0,\xi)$ if $\xi\ge 0$ or $I_\xi=(\xi,0]$ if $\xi\le 0$. Note that $I \subset G$. Now, we give the following:
\begin{lem} \label{tunisie}
 There exists 
$\epsilon_0 >0$ such that 
$$\forall \xi \in G, \,\forall \xi'\in  I_\xi,\, 0< \epsilon_0 \le |v(\xi')|\le \frac{1}{\epsilon_0}.$$

\end{lem}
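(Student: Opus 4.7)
The plan is to prove the two bounds separately, using Sobolev embedding for the upper bound and a first integral of the ODE (\ref{stka}) satisfied by $\rr$ on $G$ for the lower bound.

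For the upper bound, the one-dimensional Sobolev embedding $H^1(\mathbb R)\hookrightarrow L^\infty(\mathbb R)$ immediately gives $|v(\xi')|\le M:=\|v\|_{L^\infty(\mathbb R)}<\infty$ for every $\xi'\in\mathbb R$, and this controls the second inequality uniformly once $\epsilon_0\le 1/M$.

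For the lower bound, I would multiply (\ref{stka}) by $\rr'$ and integrate from $0$ to $\xi'$ to derive the conserved quantity
\begin{equation*}
E_\ast := \tfrac{1}{2}\rr'(\xi')^2+\frac{\mu}{2\rr(\xi')^2}-\tfrac{c_0}{2}\rr(\xi')^2+\frac{\rr(\xi')^{p+1}}{p+1},
\end{equation*}
which is constant along any $I_\xi\subset G$ and thus equals its value at $0$, a number depending only on $(v(0),v'(0))$. Dropping the non-negative term $\tfrac{1}{2}(\rr')^2$ and inserting $\rr(\xi')\le M$ yields
\begin{equation*}
\frac{\mu}{2\rr(\xi')^2}\le E_\ast+\tfrac{c_0}{2}M^2+\frac{M^{p+1}}{p+1}=:C,
\end{equation*}
so that $\rr(\xi')^2\ge \mu/(2C)>0$ whenever $\mu>0$. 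In that generic case, the choice $\epsilon_0:=\min\bigl(1/M,\sqrt{\mu/(2C)}\bigr)$ closes the argument.

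The main obstacle is the degenerate case $\mu=0$, where the singular term $\mu/\rr^2$ disappears and the first integral no longer prevents $\rr$ from vanishing. There, however, $h(0)=0$ combined with the formula $h=h(0)\rr^2(0)/\rr^2$ valid throughout $G$ forces $h\equiv 0$, so the phase of $v$ is constant on the connected component of $\g^*$ containing $0$, and $v=e^{i\theta_0}\rr$ with $\rr$ a real-valued $H^1$ solution of $\rr''=c_0\rr-\rr^p$. The desired lower bound is then inherited from the analysis of the real case in \cite{MR2362418}, and this reduction is in fact what ultimately identifies $v$ as a translate of the soliton $\check k$ in the subsequent steps.
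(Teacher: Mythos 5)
Your proof follows essentially the same route as the paper: multiply (\ref{stka}) by $\rr'$, integrate to obtain the first integral $\mathcal{E}$, and read off the two-sided bound on $\rr$ from the sign constraint $F(\rr)=\tfrac12\rr'^2\ge 0$. The only cosmetic difference is the upper bound — you invoke $H^1(\mathbb{R})\hookrightarrow L^\infty(\mathbb{R})$ directly, whereas the paper derives it from the same first integral via $F(r)\to-\infty$ as $r\to\infty$; both are perfectly fine, and your rearrangement (drop $\tfrac12\rr'^2$, bound $\rr\le M$) gives the same lower bound $\rr\ge\sqrt{\mu/(2C)}$ when $\mu>0$.

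The one place your write-up and the paper genuinely part ways is the case $\mu=0$, and here you are right to be uneasy but your proposed fix does not close the gap. The paper's own argument, ``$F(r)\to-\infty$ as $r\to 0$,'' tacitly requires $\mu>0$; for $\mu=0$ the singular term vanishes, $F(0)=\mathcal{E}(0)$, and no lower bound on $\rr$ follows from $F(\rr)\ge 0$ alone. Your suggestion that ``the desired lower bound is then inherited from the analysis of the real case'' cannot be correct as stated: in the real case the relevant $H^1$ solution is precisely $\check k$, which tends to $0$ at $\pm\infty$, so a uniform positive lower bound over all of $G$ is \emph{false} when $\mu=0$. In other words, the lemma with a uniform $\epsilon_0$ is only genuinely provable — and indeed only true — under the implicit restriction $\mu\neq 0$, which is exactly how the paper uses it later (to derive $v\notin L^2$, hence a contradiction, when $h(0)\neq 0$). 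So you have correctly identified the same implicit hypothesis the paper relies on, but the last paragraph of your proposal should be replaced by the honest observation that the statement and its proof are being read under the standing assumption $\mu>0$, rather than by an appeal to the real case.
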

\begin{proof}
Take $\xi \in \g$. By definition (\ref{waw}) of $\g$, we see that equation (\ref{stka}) is satisfied for all $\xi'\in I_\xi$. Multiplying $\rr''(\xi)-\frac{\mu}{\rr^3(\xi)}-c_0\rr(\xi)+\rr^p(\xi)=0  $ by $\rr'$ and integrating between $0$ and $\xi$, we get:
 $$\forall \xi  \in  I_\xi,\, \mathcal{E}(\xi')= \mathcal{E}(0), \mbox{ where } \mathcal{E}(\xi')=\frac{1}{2}\rr'^2(\xi')+\frac{\mu}{2 \rr^2(\xi')}-\frac{c_0}{2}\rr^2(\xi')+\frac{\rr^{p+1}(\xi')}{p+1},$$
\noindent or equivalently, 
$$\forall \xi' \in I_\xi,\, F(\rr(\xi'))=\frac{1}{2} \rr'(\xi')^2\ge 0 \mbox{ where }F(r)=-\frac{\mu}{2r^2}+\frac{c_0}{2} r^2-\frac{r^{p+1}}{p+1}+ \mathcal{E}(0).$$
Since $F(r)\rightarrow -\infty$ as $r \rightarrow 0$ or $r \rightarrow \infty$, there exists $\epsilon_0=\epsilon_0(\mu, E(0)) >0$ such that $\epsilon_0\le \rr(\xi') \le \frac{1}{\epsilon_0}$, which yields to the conclusion of the Claim \ref{tunisie}.
\end{proof}
We claim the following:
\begin{lem}\label{os} It holds that
$\g=\mathbb{R}$.
\end{lem}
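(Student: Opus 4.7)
The plan is to show that $\g$ is a non-empty, open, and closed subset of $\mathbb{R}$, which by connectedness forces $\g=\mathbb{R}$.

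First, non-emptiness is immediate: $0\in I\subset \g$ by construction. Moreover, the very definition of $\g$ via intervals $I_\xi$ makes it clear that $\g$ is an interval containing $0$: if $\xi\in \g$ and $\xi'\in I_\xi$, then $I_{\xi'}\subset I_\xi$, so the defining identity for $h$ still holds on $I_{\xi'}$, giving $\xi'\in \g$.

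For openness, I would take $\xi_0\in \g$ and apply Lemma~\ref{tunisie} to obtain $\epsilon_0>0$ with $|v(\xi')|\ge\epsilon_0$ for all $\xi'\in I_{\xi_0}$; by continuity of $v$ this lower bound persists on a slightly larger open interval $J\supset I_{\xi_0}\cup\{\xi_0\}$, so $J\subset \g^*$ and $h$ is $C^1$ on $J$. The second equation of \eqref{mer} can be rewritten as $(\rr^2 h)'=0$, so $\rr^2 h$ is constant on $J$. Evaluating at $\xi_0$ and using $\xi_0\in \g$ gives $\rr^2(\xi)h(\xi)=\rr^2(\xi_0)h(\xi_0)=\rr^2(0)h(0)$ for every $\xi\in J$. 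Hence every $\xi$ in a neighborhood of $\xi_0$ lies in $\g$.

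For closedness, I would take a sequence $\xi_n\in \g$ with $\xi_n\to \xi_\infty$, assuming without loss of generality that $\xi_n\ge 0$ (the other case is symmetric). Lemma~\ref{tunisie} yields the uniform bound $\epsilon_0\le |v(\xi')|\le 1/\epsilon_0$ for all $\xi'\in\bigcup_n I_{\xi_n}$; by continuity this bound extends to $\xi_\infty$, so $\xi_\infty\in \g^*$. The formula $\rr^2(\xi')h(\xi')=\rr^2(0)h(0)$ holds on $\bigcup_n I_{\xi_n}$, and since $h$ is continuous on $I_{\xi_\infty}\subset \g^*$, it extends to all of $I_{\xi_\infty}$, giving $\xi_\infty\in \g$.

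The only delicate point is the openness step, where one must use the reformulation $(\rr^2 h)'=0$ of the second equation in \eqref{mer} together with the uniform non-vanishing of $v$ from Lemma~\ref{tunisie} to guarantee that $h$ is actually defined and $C^1$ on a neighborhood that is strictly larger than $I_{\xi_0}$. Everything else is continuity and ODE uniqueness, so no further obstacle is expected.
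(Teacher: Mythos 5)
Your proof is correct and is essentially the same argument as the paper's: the paper also uses Lemma~\ref{tunisie} to show non-vanishing of $v$ near the boundary of $\g$, concludes $a=\sup\g\in\g^*$ (your closedness step), and then locally integrates the second equation of~\eqref{mer} to extend the identity past $a$ (your openness step), reaching a contradiction with $\sup\g=a<\infty$. You have merely recast the contradiction-with-sup argument in the cleaner ``nonempty, open, closed, connected'' packaging; the technical content — the a priori bound of Lemma~\ref{tunisie}, the continuity of $v$ and $h$ on $\g^*$, and the reformulation $(\rr^2 h)'=0$ — is identical.
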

\begin{proof}
Note first that by construction, $\g$ is a nonempty interval (note that $0\in I \subset\g$ where $I$ is defined right after (\ref{*})). We have only to prove that $\sup \g=+\infty$, since the fact that $\inf \g=-\infty$ can be deduced by replacing $v(\xi)$ by $v(-\xi)$.\\By contradiction, suppose that $\sup \g=a<+\infty$. 
By continuity, we have
\begin{equation}\label{gg}
\forall \xi \in [0, a], h(\xi)=\frac{h(0)\rr(0)^2}{\rr(\xi)^2},
\end{equation}
on the one hand. On the other hand, by Lemma \ref{tunisie}, for all $ \xi' \in [0,a),  0< \epsilon_0 \le |v(\xi')|\le \frac{1}{\epsilon_0}$. Using (\ref{gg}) $v(a)\neq 0$, and $a\in \g^*$.
Using (\ref{gg}), we see that $a\in \g$.
By continuity, we can write for all $\xi \in (a-\delta,a+\delta),$ where $\delta >0 $ is small enough,
 \begin{equation*} \left\{
\begin{array}{l}
 \rr''(\xi)-\rr (\xi)\theta'(\xi)^2-c_0\rr(\xi)+\rr(\xi)^p=0,\; c_0= \frac{4}{(p-1)^2}\\
2\rr'(\xi)h(\xi)+\rr(\xi) h'(\xi)=0.
\end{array}
 \right . 
\end{equation*}
From the second equation and (\ref{gg}) applied with $\xi=a$, we see that $h(\xi)=\frac{h(a)(\rr(a))^2}{(\rr(\xi))^2}=\frac{h(0)(\rr(0))^2}{(\rr(\xi))^2}$. Therefore, it follows that $(a, a+\delta )\in E$, which contradicts the fact that $a=\sup\g$.
\end{proof}
Note from Lemma \ref{os} that (\ref{mer}) and (\ref{stka}) hold for all $\xi \in \mathbb{R}$.
 We claim that $h(0)=0$. Indeed, if not, then by (\ref{stka}), we have $\mu \neq 0$, and since $\g=\mathbb{R}$, we see from Lemma \ref{tunisie} that for all $\xi\in\mathbb{R}$, $|v(\xi)|\ge \epsilon_0$, therefore $v \notin L^2(\mathbb{R})$, which contradicts the fact that $v\in H^1(\mathbb{R})$. Thus, $h(0)=0$, and $\mu=0$.
 By uniqueness of solutions to the second equation of (\ref{mer}), we see that $h(\xi)=0$ for all $\xi \in \mathbb{R}$, so $\theta(\xi)=\theta(0)$ and $v'(0)=\lambda e^{i\theta (0)}\, (\lambda \in \mathbb{R})$. Thus
 \begin{equation*} \left\{
\begin{array}{l}
v(0)=R(0)e^{i\theta (0)}\\
v'(0)=\lambda e^{i\theta (0)}.
\end{array}
 \right . 
\end{equation*}
Let $W\in H^1(\mathbb{R})$ be a real solution of
 \begin{equation}\label{po} \left\{
\begin{array}{l}
W''-c_0 W+|W|^{p-1}W=0\\
 W(0)=R(0)\\
 W'(0)=\lambda.
\end{array}
 \right . 
\end{equation}
By uniqueness of the Cauchy problem of equation (\ref{51}), we have for all $\xi \in \mathbb{R}, v(\xi)=W(\xi) e^{i\theta(0)}$, and as $v\in H^1(\mathbb{R})$, $W$ is also in $H^1(\mathbb{R})$. Since it is well known that the real solutions of (\ref{po}) in $H^1(\mathbb{R})$ are
\begin{equation}\label{essou}
 \mbox{either } W \equiv 0 \;\mbox{or }W(\xi)=\pm\check {k }(\xi+\xi_0)\,\mbox{ for some }\xi_0 \in \mathbb{R},
\end{equation}
for the reader's convenience, we recall the proof in Appendix \ref{appendix1}, it follows that $v(\xi)=e^{i\theta_0}\check {k }(\xi+\xi_0)$ for some $\xi_0 \in \mathbb{R}$, because $W \not\equiv 0$ and $W>0$. Thus, for $d=\tanh \xi_0\,\in\, (-1,1)$ and $y=\tanh \xi$, we get
\begin{align*}
 &v(\xi)=e^{i\theta_0}\kappa_0 \left[1-\tanh (\xi+\xi_0)^2\right]^\frac{1}{p-1}=e^{i\theta_0}\kappa_0 \left[1-\left(\frac{\tanh \xi+\tanh \xi_0}{1+\tanh \xi \tanh \xi_0}\right)^2\right]^\frac{1}{p-1}\notag\\&=e^{i\theta_0}\kappa_0 \left[1-\left(\frac{y+\bb }{1+\bb y}\right)^2\right]^\frac{1}{p-1}=e^{i\theta_0}\kappa_0 \left[\frac{(1-\bb^2)(1-y^2)}{(1+\bb y)}^2\right]^\frac{1}{p-1}=e^{i\theta_0}\kappa(\bb ,y) (1-y^2)^\frac{1}{p-1}.
\end{align*}
By (\ref{50}), we see that $w(y)=e^{i\theta_0}\kappa(\bb ,y)$. This concludes the proof of Proposition \ref{p1}.
\end{proof}
\section{The linearized operator around a non zero stationary solution}
In this section, we study the properties of the linearized operator of equation (\ref{equa}) around the stationary solution $ \kappa (\bb ,y)$ (\ref{defk}). We recall that in \cite{MR2362418}, the authors have treated the real case, by introducing $q=(q_1,q_2)\in \mathbb{R}\times \mathbb{R}$ and linearizing around $ \kappa (\bb ,y)$. It turns out that the real part of our linearized operator for complex-valued solution is identical to the real part of the linearized operator in the real case treated in \cite{MR2362418}. As a matter of fact, we will rely on \cite{MR2362418} for the real part and have to invent new methods for the imaginary part.

\noindent For any complex number $z$, we use in the following the notation
\begin{align*}
 \Check z=\operatorname{Re}(z)\mbox{ and }\tilde z=\operatorname{Im}(z).
\end{align*}
If we introduce $q=(q_1,q_2)=\begin{pmatrix}{q_1}\\{q_2}\end{pmatrix}\in \mathbb{C}\times \mathbb{C}$
for all $ s\in[s_0, \infty)$, for a given $s_0 \in \mathbb{R}$, by
\begin{eqnarray*}
\begin{pmatrix} w(y,s)\\\partial_s w(y,s) \end{pmatrix} =\begin{pmatrix} \kappa(\bb ,y)\\0\end{pmatrix} +\begin{pmatrix} q_1(y,s)\\q_2(y,s)\end{pmatrix},
\end{eqnarray*}
then, we see from equation (\ref{equa}) that $q$ satisfies the following equation for all $s\ge s_0$:
\begin{eqnarray} \label{linearis�-complexe}
\frac{\partial }{\partial s} \begin{pmatrix} q_1\\q_2 \end{pmatrix} = L_\bb \begin{pmatrix}q_1\\q_2\end{pmatrix}+\begin{pmatrix}0\\ f_\bb (q_1)\end{pmatrix},
\end{eqnarray}
where
$$ L_\bb \begin{pmatrix}q_1\\q_2\end{pmatrix}=\begin{pmatrix}q_2 \\\mathcal{L}q_1+\check\psi (\bb , y)\check q_1+i \tilde\psi (\bb , y)\tilde{q_1}-\frac{p+3}{p-1} q_2- 2 y \partial_y q_2 \end{pmatrix}, $$
$$\check\psi (\bb , y)=p \kappa(\bb , y)^{p-1}-\frac{2(p+1)}{(p-1)^2},$$
$$\tilde\psi (\bb , y)= \kappa(\bb , y)^{p-1}-\frac{2(p+1)}{(p-1)^2},$$
$$f_\bb (q_1)=\check{f_\bb }(\check q_1,\tilde q_1)+i\tilde{f_\bb }(\check q_1,\tilde q_1),$$
$$ \mbox{where } \check{f_\bb }(\check q_1,\tilde q_1)=|\kappa(\bb , y)+q_1|^{p-1}(\kappa(\bb , y)+\check q_1)-\kappa(\bb , y)^{p}-p\kappa^{p-1}(\bb , y)\check q_1,$$
\begin{eqnarray}\label{barL_bb }\tilde{f_\bb }(\check q_1,\tilde q_1)=|\kappa(\bb , y)+q_1|^{p-1}\tilde q_1-\kappa^{p-1}(\bb , y)\tilde q_1.\end{eqnarray}
From (\ref{linearis�-complexe}), dissociating the real and imaginary parts, we get for all $s\ge s_0$:
\begin{eqnarray}\frac{\partial }{\partial s}\begin{pmatrix} \check q_1\\\check q_2
\end{pmatrix}=\check L_\bb  \begin{pmatrix} \check q_1 \\\check q_2 \end{pmatrix}+\begin{pmatrix}0\\ \check{f_\bb }\end{pmatrix},
\end{eqnarray}
where
\begin{eqnarray}\label{barL_bb }
\check L_\bb  \begin{pmatrix} \check q_1\\\check q_2 \end{pmatrix}=
\begin{pmatrix}\check q_2 \\\mathcal{L} \check q_1+\check\psi (\bb , y)\check q_1-\frac{p+3}{p-1} \check q_2- 2 y \partial_y \check q_2
\end{pmatrix},
\end{eqnarray}
and
\begin{eqnarray}\frac{\partial }{\partial s} \begin{pmatrix} \tilde{q_1}\\\tilde q_2
\end{pmatrix}=\tilde L_\bb \begin{pmatrix} \tilde q_1\\\tilde q_2 \end{pmatrix}+\begin{pmatrix} 0\\\tilde{f_\bb } \end{pmatrix}, 
\end{eqnarray}
where
\begin{eqnarray}\label{tildeL_bb }
 \tilde L_\bb \begin{pmatrix} \tilde q_1\\\tilde q_2 \end{pmatrix}=\begin{pmatrix} \tilde q_2\\\mathcal{L}\tilde q_1+\tilde\psi (\bb , y)\tilde q_1-\frac{p+3}{p-1} \tilde q_2- 2 y \partial_y \tilde q_2 \end{pmatrix}.
\end{eqnarray}
\noindent{\bf Remark:} From (\ref{linearis�-complexe}) we see that for $q=\begin{pmatrix}\check {q_1}\\\check {q_2}\end{pmatrix}+i\begin{pmatrix}\tilde{q_1}\\\tilde{q_2}\end{pmatrix}, $ we have
 \begin{equation*}\label{+}
  L_\bb  \begin{pmatrix}q_1\\q_2\end{pmatrix}= \check L_\bb  \begin{pmatrix}\check {q_1}\\ \check {q_2}\end{pmatrix}+i \tilde L_\bb  \begin{pmatrix}\tilde{q_1}\\ \tilde{q_2}\end{pmatrix}.
 \end{equation*}
Note that the operator $\check L_\bb $ (\ref{barL_bb }) already appears in the real case studied in \cite{MR2362418}. For that reason, we recall from that paper the properties of $\check L_\bb $, and focus here on the properties of $\tilde L_\bb $, which is one of the novelties of our work.
\noindent Note from (\ref{9}) that we have
$$||q||_\mathcal{H}=[\phi (q, q)]^\frac{1}{2}<+\infty,$$
where the hermitian inner product $\phi$ is defined by
\begin{equation*}
 \phi (q, r)=\phi \left(\begin{pmatrix} q_1\\q_2 \end{pmatrix}, \begin{pmatrix} r_1\\r_2 \end{pmatrix}\right)=\int_{-1}^{1}(q_1 \bar r_1+q'_1 \bar r'_1(1-y^2)+q_2 \bar r_2)\rho(y)\;dy.
\end{equation*}
Using integration by parts and the definition of $\mathcal{L}$ (\ref{8}), we have the following:
\begin{equation}\label{phi}
 \phi (q, r)=\int_{-1}^{1}(q_1(-\mathcal{L} \bar r_1+ \bar r_1)+q_2  \bar r_2)\rho(y) \,dy.
\end{equation}
We note that $q\in \mathcal{ H}$ if and only if $\check q\in \mathcal{ H}$ and $\tilde q\in \mathcal{ H}$, and
$$||q||_{\mathcal{ H}}^2=||\check q||_{\mathcal{ H}}^2+||\tilde q||_{\mathcal{ H}}^2.$$
This section is organized as follows:

\medskip
-We first recall some spectral properties of $\check L_\bb $ which was proved by Merle and Zaag in \cite{MR2362418}.

\medskip
-Then, we focus on the study of $\tilde L_\bb $, precisely, we compute $\tilde L_\bb ^*$ the conjugate operator of $\tilde L_\bb $ and we give a zero direction for it.

\medskip
-Using the projection on the eigenspace of $\tilde L_\bb $, we introduce a function which will capture the dispersive character of equation (\ref{linearis�-complexe}), and give some dispersive estimates in order to prove Theorem \ref
{theo3}.
\subsection{Spectral properties of $\check L_\bb $}
From Section 4 in \cite{MR2362418}, we know that $\check L_\bb $ has two nonnegative eigenvalues $\lambda=1$ and $\lambda=0$ with eigenfunctions
\begin{equation}
\check F_1 (\bb ,y)= (1-\bb^2)^{\frac{p}{p-1}}\begin{pmatrix}(1+\bb y)^{-\frac{p+1}{p-1}}\\(1+\bb y)^{-\frac{p+1}{p-1}}\end{pmatrix}
\mbox{and }\;
\check F_0(\bb ,y)= (1-\bb^2)^{\frac{1}{p-1}}\begin{pmatrix}\frac{y+\bb }{(1+\bb y)^\frac{p+1}{p-1}}\\0\end{pmatrix}. \label{110}
\end{equation}
Note that for some $C_0>0$ and any $\lambda\in \{0,1\}$, we have
\begin{equation}\label{majoration}
\forall |\bb|<1,\;\; \frac{1}{C_0}\leq ||\check F_\lambda^\bb ||_{\mathcal{H}} \leq C_0 \;\mbox{  and  }\; ||\partial_\bb  \check F_\lambda^\bb ||_{\mathcal{H}} \leq \frac{C_0}{1-\bb^2}.
\end{equation}
We know also that $\check L_\bb ^*$ the conjugate operator of $\check L_\bb $ with respect to $\phi$ is given by
\begin{equation*}
\check L_\bb ^*
\begin{pmatrix}
r_1\\
r_2\end{pmatrix}= \begin{pmatrix}
\check R_\bb (r_2)\\
-\mathcal{L} r_1+r_1+\frac{p+3}{p-1}r_2+2y r_2'-\frac{8}{(p-1)}\frac{r_2}{(1-y^2)}\end{pmatrix}
\end{equation*}
for any $(r_1, r_2)\in (\mathcal{D}(\mathcal{L}))^2$, where $r=\check R_\bb  (r_2)$ is the unique solution of
\begin{equation*}
-\mathcal{L} r+r=\mathcal{L} r_2+\check{\psi} (\bb , y)r_2. 
\end{equation*}
Here, the domain $\mathcal{D}(\mathcal{L})$ of $\mathcal{L}$ defined in (\ref{8}) is the set of all $r \in L_{\rho}^2$ such that $\mathcal{L} r \in L_{\rho}^2.$
\medskip

\noindent Furthermore, $\check L_\bb ^*$ has two nonnegative eigenvalues $\lambda=0$ and $\lambda=1$ with eigenfunctions $\check W_{\lambda}$ such that 
\begin{equation*}
\check W_{1, 2}(\bb  ,y)= \check c_1 \frac{(1-y^2)(1-\bb )^\frac{1}{p-1}}{(1+\bb y)^\frac{p+1}{p-1}},\,\check W_{0, 2}(\bb  ,y)= \check c_0 \frac{(y+\bb )(1-\bb )^\frac{1}{p-1}}{(1+\bb y)^\frac{p+1}{p-1}},
\end{equation*}
with\footnote{ In section 4 of \cite{MR2362418}, we had non explicit normalizing constants $\check c_\lambda=\check c_\lambda (\bb )$. In Lemma 2.4 in \cite{MZ13}, the authors compute the explicit dependence of $\check c_\lambda (\bb )$.}
\begin{equation*}
\frac{1}{\check c_\lambda}=2(\frac{2}{p-1}+\lambda)\int_{-1}^1 (\frac{y^2}{1-y^2} )^{1-\lambda}\rho(y) \,dy,
\end{equation*}
and $\check W_{\lambda, 1}(\bb,\cdot)$ is the unique solution of the equation 
\begin{equation*}
-\mathcal{L} r+ r =\left(\lambda-\frac{p+3}{p-1}\right) r_2- 2 y r'_2 + \frac{8}{p-1} \frac{r_2}{1-y^2}
\end{equation*}
with $r_2= \check W_{\lambda, 2}(\bb,\cdot)$. Note that we have the following relations for $\lambda=0$ or $\lambda=1$
\begin{equation*}
\phi (\check W_\lambda(\bb,\cdot) ,\check F_\lambda(\bb,\cdot) )=1\mbox{ and }\phi (\check W_\lambda (\bb,\cdot),\check F_{1-\lambda}(\bb,\cdot) )=0.
\end{equation*}
Let us introduce for $\lambda \in \{0, 1\}$ the projectors $\check \pi^\bb _\lambda(r)$, and $\check \pi_-^\bb (r)$ for any $r\in\mathcal{H}$ by
\begin{equation} \label{barpi}
\check \pi_\lambda^\bb  (r)=\phi (\check W_\lambda (\bb,\cdot) , r), 
\end{equation}

\begin{equation} \label{ma}
r=\check \pi_0^\bb  (r) \check F_0(\bb  ,\cdot)+\check \pi_1^\bb  (r) \check F_1(\bb  ,\cdot)+ \check \pi_{-}^\bb  (r),
\end{equation}
and the space
\begin{equation*}
\check{\mathcal{H}}_{-}^\bb  \equiv \{r \in \mathcal{H} \,|\, \check \pi_1^\bb  (r)=\check \pi_0^\bb (r)=0\}.
\end{equation*}
Introducing the bilinear form
\begin{eqnarray}\label{134'}
 \check \varphi_{\bb } (q,r)&=&\int_{-1}^{1} (-\check \psi(\bb ,\cdot) q_1r_1+q_1' r_1'(1-y^2)+q_2 r_2 ) \rho dy,
\end{eqnarray}
where $\check \psi(\bb ,y)$ is defined in (\ref{barL_bb }), we recall from Proposition 4.7 page 90 in  \cite{MR2362418} that there exists $C_0>0$ such that for all $|\bb|<1$, for all $r\in \tilde{\mathcal{H}}^\bb _{-},$
\begin{align}\frac{1}{C_0} ||r||_{\mathcal{H} }^2\le \check \varphi_\bb  (r,r)\le C_0 ||r||_\mathcal{H} ^2.
\label{36,5}
\end{align}

\noindent In the following sections, we follow the method of \cite{MR2362418} to study the spectral properties of $\tilde L_\bb $.

\subsection{A zero direction of $\tilde L_\bb $}
Let us show that $\lambda=0$ is an eigenvalue for $\tilde L_\bb $. We claim the following:
\begin{lem}{\bf (Zero direction of $\tilde L_\bb $)}\label{fonctions_propres}
\item{(i)} For all $|\bb|< 1$, $\lambda= 0$ is an eigenvalue of the linear operator $\tilde L_\bb $ and its corresponding eigenfunction is 
\begin{equation}\label{F}
\tilde F_0(\bb  ,y)= \begin{pmatrix}\kappa (\bb ,y)\\0\end{pmatrix}.
\end{equation}
\item{(ii)} Moreover, it holds for some $C_0>0$ that
\begin{equation}\label{majoration1}
\forall |\bb|<1,\;\; \frac{1}{C_0}\leq ||\tilde F_0(\bb,\cdot)||_{\mathcal{H}} \leq C_0 \;\mbox{  and  }\; ||\partial_\bb  \tilde F_0(\bb,\cdot)||_{\mathcal{H}} \leq \frac{C_0}{1-\bb^2}.
\end{equation}
\end{lem}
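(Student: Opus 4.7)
For part (i), my plan is to verify the eigenvalue relation by direct substitution. Plugging $\tilde F_0(\bb,\cdot) = (\kappa(\bb,\cdot),0)^T$ into the formula (\ref{tildeL_bb }) for $\tilde L_\bb$, the top entry collapses to $0$ and the bottom entry becomes $\mathcal{L}\kappa(\bb,\cdot) + \tilde\psi(\bb,\cdot)\kappa(\bb,\cdot)$. Since $\tilde\psi(\bb,y) = \kappa(\bb,y)^{p-1} - \frac{2(p+1)}{(p-1)^2}$ by definition, this simplifies to $\mathcal{L}\kappa - \frac{2(p+1)}{(p-1)^2}\kappa + \kappa^p$, which vanishes identically because $\kappa(\bb,\cdot)$ is a real positive stationary solution of (\ref{47}). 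Conceptually, this zero mode is the infinitesimal generator of the $U(1)$-phase invariance of (\ref{equa}): the family $\theta \mapsto e^{i\theta}\kappa(\bb,\cdot)$ consists entirely of stationary solutions, and differentiating at $\theta=0$ produces $i\kappa(\bb,\cdot)$, whose imaginary tangent component $\kappa$ must lie in the kernel of $\tilde L_\bb$.

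For the uniform two-sided bound on $\|\tilde F_0(\bb,\cdot)\|_{\mathcal H}$, rather than evaluate the two integrals directly by a Möbius change of variables, I would bypass brute-force integration and exploit Proposition \ref{p1}(ii), which pins the energy $E(\kappa(\bb,\cdot),0)$ at the $\bb$-independent value $E(\kappa_0,0)>0$. Multiplying the stationary equation (\ref{47}) by $\kappa$ and integrating by parts against $\rho$ (the boundary terms vanish because $\rho(1-y^2) = (1-y^2)^{(p+1)/(p-1)}$ kills the endpoints) gives
$$\int_{-1}^{1}\kappa^{p+1}\rho\,dy = \int_{-1}^{1}(\kappa')^2(1-y^2)\rho\,dy + \frac{2(p+1)}{(p-1)^2}\int_{-1}^{1}\kappa^2\rho\,dy.$$
Substituting this identity into the definition (\ref{15}) of $E$ and simplifying yields
$$E(\kappa(\bb,\cdot),0) = \frac{p-1}{2(p+1)}\int_{-1}^{1}(\kappa')^2(1-y^2)\rho\,dy + \frac{1}{p-1}\int_{-1}^{1}\kappa^2\rho\,dy = E(\kappa_0,0).$$
Since both coefficients on the right are strictly positive and depend only on $p$, the left-hand side is a positive linear combination of exactly the two pieces that make up $\|\tilde F_0\|_{\mathcal H}^2$, so the desired upper and lower bounds follow at once with $C_0$ depending only on $p$ and $E(\kappa_0,0)$.

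For the bound on $\partial_\bb \tilde F_0$, the plan is to reduce to the already-known estimate (\ref{majoration}) by identifying $\partial_\bb \kappa$ algebraically with the eigenfunction $\check F_0$. A logarithmic differentiation of (\ref{defk}) yields
$$\frac{\partial_\bb \kappa(\bb,y)}{\kappa(\bb,y)} = -\frac{2}{p-1}\left(\frac{\bb}{1-\bb^2} + \frac{y}{1+\bb y}\right) = -\frac{2}{p-1}\cdot\frac{y+\bb}{(1-\bb^2)(1+\bb y)},$$
whence
$$\partial_\bb \kappa(\bb,y) = -\frac{2\kappa_0}{(p-1)(1-\bb^2)}\cdot\frac{(1-\bb^2)^{1/(p-1)}(y+\bb)}{(1+\bb y)^{(p+1)/(p-1)}}.$$
Comparing the last fraction with the definition (\ref{110}), it is exactly the first (and only nonzero) component of $\check F_0(\bb,\cdot)$, so we arrive at the tidy identity
$$\partial_\bb \tilde F_0(\bb,\cdot) = -\frac{2\kappa_0}{(p-1)(1-\bb^2)}\,\check F_0(\bb,\cdot),$$
and the claimed estimate $\|\partial_\bb \tilde F_0(\bb,\cdot)\|_{\mathcal H} \le C_0/(1-\bb^2)$ follows immediately from the bound $\|\check F_0(\bb,\cdot)\|_{\mathcal H} \le C_0$ in (\ref{majoration}). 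The only real (and mild) obstacle in the whole lemma is spotting this exact algebraic matching with $\check F_0$; once it is recognized, everything reduces to routine bookkeeping using the previously derived real-case spectral data.
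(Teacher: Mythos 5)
Your proof is correct, and for part (ii) it takes a genuinely different (and arguably cleaner) route than the paper. Part (i) is identical in substance: plug $\tilde F_0$ into the formula for $\tilde L_\bb$, note that the bottom entry is the stationary equation (\ref{47}) for $\kappa$, and conclude. Your $U(1)$-invariance gloss is exactly the geometric picture the paper gives in a remark following the lemma.

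For the two-sided bound on $\|\tilde F_0\|_{\mathcal H}$, the paper writes $\kappa(\bb,\cdot) = \mathcal T_\bb(\kappa_0)$ and invokes the continuity of the Lorentz transform $\mathcal T_\bb$ in $\mathcal H_0$ (Lemma \ref{ff}), which instantly transfers the fixed constant $\|\kappa_0\|_{\mathcal H_0}$ to every $\bb$. You instead multiply (\ref{47}) by $\kappa\rho$, integrate by parts, and feed the resulting virial-type identity back into the definition (\ref{15}) of $E$, arriving at $E(\kappa(\bb,\cdot),0) = \frac{p-1}{2(p+1)}\int \kappa'^2(1-y^2)\rho + \frac{1}{p-1}\int\kappa^2\rho$, a positive linear combination of the two pieces of $\|\tilde F_0\|_{\mathcal H}^2$; combined with the $\bb$-independence $E(\kappa(\bb,\cdot),0)=E(\kappa_0,0)>0$ from Proposition \ref{p1}(ii), both the upper and lower bounds follow with constants depending only on $p$. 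Both routes are sound; yours avoids invoking $\mathcal T_\bb$ and makes the positivity of the constant transparent, at the price of depending on the already-established energy characterization. For the derivative bound, the paper computes pointwise estimates on $\partial_\bb\tilde F_0$ and $\partial^2_{\bb,y}\tilde F_0$ and integrates using the explicit rules of Claim \ref{claim}, whereas you recognize the exact algebraic identity $\partial_\bb\tilde F_0(\bb,\cdot) = -\frac{2\kappa_0}{(p-1)(1-\bb^2)}\,\check F_0(\bb,\cdot)$ and conclude immediately from the uniform bound on $\|\check F_0\|_{\mathcal H}$ in (\ref{majoration}); this is slicker and eliminates the integral bookkeeping. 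Note that the paper's own remark after the lemma asserts this proportionality (with a small typo referring to the wrong label), so your observation is in fact the cleaner version of an idea the authors already acknowledge but do not use in the proof.
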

\noindent {\bf Remark}: There is a more geometrical way to see that $\lambda=0$ is an eigenvalue for $\tilde L_\bb$ and $\check L_\bb$ (in other worlds, a double eigenvalue for $L_\bb$ given in (\ref{barL_bb })): simply note that equation (\ref{equa}) has a 2-parameter family of stationary solutions
$$K(\bb,\theta,y)=\left(e^{i\theta}\kappa(\bb,y),0\right)$$ 
hence, $\partial_\bb K(\bb,0,y)=\left(\partial_\bb \kappa(\bb,y),0\right)$ and $\partial_\theta K(\bb,0,y)=\left(i \kappa(\bb,y),0\right)$ are eigenfunctions of the linearized operator of equation (\ref{equa}) around $K(\bb,0,y)=\left( \kappa(\bb,y),0\right)$, which is precisely the operator $L_\bb$. Splitting $L_\bb$ into real and imaginary parts shows that $(\partial_\bb \kappa(\bb,y),0)$ and $ (\kappa(\bb,y),0)$ are eigenfunctions of $\check L_\bb$ and $\tilde L_\bb$, respectively. A simple calculation shows indeed that $(\partial_\bb \kappa(\bb,y),0)$ is proportional to $\tilde F_0(\bb  ,y)$ given in.(\ref{110}).

\noindent The fact that $\lambda=1$ is an eigenvalue of $L_\bb$ follows from similar ideas: noting that
\begin{eqnarray*}
 \bar K(\bb,\mu,y,s)=\left( \frac{\kappa_0(1-\bb^2)^\frac{1}{p-1}}{(1+\mu e^s+\bb y)^\frac{2}{p-1}},-\frac{2\mu e^s}{p-1}\frac{\kappa_0 (1-\bb^2)^\frac{1}{p-1}}{(1+\mu e^s+\bb y)^\frac{p+1}{p-1}}\right)
\end{eqnarray*}
is an explicit solution of equation (\ref{equa}) with $ \bar K(\bb,0,y,s)=(\kappa(\bb,y),0)$, when $\mu=0$, differentiating with respect to the new parameter $\mu$, we obtain an eigenfunction for $L_\bb$ with $\lambda=1$.

\begin{proof}
 $(i)$ As $\kappa (\bb ,y)$ is a stationary solution of (\ref{equa}), it satisfies (\ref{47}), hence
$$\mathcal{L}\kappa(\bb ,y)- \frac{2(p+1)}{(p-1)^2}\kappa(\bb ,y)+{\kappa(\bb ,y)}^p=0. $$
By definition (\ref{tildeL_bb }) of $\tilde L_\bb $, we see that $\tilde L_\bb \begin{pmatrix}\kappa (\bb ,y)\\0\end{pmatrix}=0.$\\
$(ii)$ Noting that $\kappa(\bb ,y)=\mathcal{T}_\bb  (\kappa_0) $ where the transformation $\mathcal{T}_\bb  $ is defined in $(\ref{transformation})$, applying Lemma \ref{ff} and using $(\ref{defk})$, we get the first bound. In order to prove the second one, we recall the following integral calculation rules from \cite{MR2362418}:
\begin{cl} \label{claim}
Consider for some $\alpha >-1$ and $\beta \in \mathbb{R}$ the following integral: 
$$I(\bb )=\int_{-1}^{1} \frac{(1-y^2)^{ \alpha}}{(1+\bb y)^{\beta}}\; dy.$$
\item{(i)} if $\alpha+1-\beta >0$, then $\frac{1}{K}\leq I(\bb ) \leq K$;
\item{(ii)} if $\alpha+1-\beta =0$, then $\frac{1}{K}\leq\frac{ I(\bb )}{|\log(1-\bb^2)|} \leq K$;
\item{(iii)} if $\alpha+1-\beta< 0$, then $\frac{1}{K}\leq I(\bb )(1-\bb^2)^{-(\alpha+1)+\beta} \leq K$.
\end{cl}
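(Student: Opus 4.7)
The plan is to reduce $I(\bb)$ to a one-parameter asymptotic analysis near $\bb = \pm 1$. First I would note that the substitution $y\mapsto -y$ shows $I(\bb) = I(-\bb)$, so one may assume $\bb \in [0, 1)$. On any compact subset of $[0, 1)$ the integrand is a positive continuous function of $(\bb, y)$, so $I(\bb)$ is trapped between two positive constants there; since $1-\bb^2$ and $|\log(1-\bb^2)|$ also stay bounded on such a set, each of the three estimates is automatic on compact subsets. The real task is the regime $\bb \to 1^{-}$.

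In that regime I would split $I(\bb) = \int_{-1}^{0} + \int_{0}^{1}$. On $[0,1]$ the factor $(1+\bb y)^{\beta}$ is trapped between $1$ and $2^{|\beta|}$, so this piece contributes $O(1)$ with a matching positive lower bound. The delicate piece is $\int_{-1}^{0}$, where $(1+y)^\alpha$ and $(1+\bb y)^{-\beta}$ both degenerate as $y\to -1$ and $\bb\to 1$. Setting $t = 1+y$ and noting $(1-y)^\alpha \asymp 1$ on this interval, this piece is comparable to $\int_{0}^{1} t^\alpha (1-\bb + \bb t)^{-\beta}\, dt$; after the rescaling $u = \bb t / (1 - \bb)$ it becomes
\[
\frac{(1-\bb)^{\alpha+1-\beta}}{\bb^{\alpha+1}} \int_0^{A(\bb)} \frac{u^\alpha}{(1+u)^\beta}\, du, \qquad A(\bb) = \frac{\bb}{1-\bb} \to +\infty.
\]

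The three cases then correspond to the three possible behaviors of this last integral, dictated by the exponent $\alpha - \beta$ at infinity. When $\alpha+1-\beta < 0$, the integral converges to a positive constant as $A\to\infty$, so the whole expression is of order $(1-\bb)^{\alpha+1-\beta} \asymp (1-\bb^2)^{\alpha+1-\beta}$, which blows up and dominates the $O(1)$ piece, giving case $(iii)$. When $\alpha+1-\beta > 0$, the integral grows like $A^{\alpha+1-\beta}$, the prefactor cancels exactly, and one recovers case $(i)$. In the borderline case $\alpha+1-\beta = 0$, the integrand decays like $u^{-1}$ at infinity, so the integral grows like $\log A \asymp |\log(1-\bb^2)|$, giving case $(ii)$. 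Matching lower bounds come for free from positivity of the integrand on a fixed subinterval of $[-1, 1]$.

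There is no serious obstacle: the argument is a standard local asymptotic analysis of a Beta-type integral. The only point requiring some care is uniformity of the constant $K$ when gluing the bounded-$\bb$ and $\bb \to 1^-$ regimes, which I would handle by observing that each of the three right-hand sides is a continuous positive function of $\bb \in [0, 1)$ whose behavior at $1^-$ matches the asymptotics just derived; a compactness argument then yields a single constant $K$ valid throughout.
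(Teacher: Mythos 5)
Your proof is correct and, since the paper itself only cites page 84 of Merle--Zaag (J. Funct. Anal. 253, 2007) for this claim, your argument actually supplies details that the paper omits. The reduction by symmetry to $\bb\in[0,1)$, the split of $I(\bb)$ at $y=0$, the substitution $t=1+y$ followed by the rescaling $u=\bb t/(1-\bb)$, and the trichotomy based on the decay rate $u^{\alpha-\beta}$ of the integrand at infinity all check out; the prefactor $(1-\bb)^{\alpha+1-\beta}$ cancels, produces the logarithm, or survives exactly as you say, and $1-\bb\asymp 1-\bb^2$ as $\bb\to 1^-$ converts your bounds into the stated ones. The uniformity-of-$K$ argument via continuity on compacts together with the matching $\bb\to 1^-$ asymptotics is also sound. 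This is essentially the standard direct asymptotic analysis one expects for such a Beta-type integral, and there is no gap.
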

\begin{proof}
See page 84 of \cite{MR2362418}.
\end{proof}
Using the definition of $\tilde{F_0}$ (\ref{F}), the fact that
\begin{equation}\label{naro}
 \forall (\bb ,y)\in (-1,1)^2, |y+\bb |+|1-\bb^2|+(1-y^2) \leq  C (1+\bb y),
\end{equation}
and straightforward computations we see that
\begin{center}
\begin{tabular}{ll} 
 $|\partial_\bb  \tilde F_0(\bb  ,y)| \leq C\frac{(1-\bb^2)^{\frac{2-p}{p-1}}}{(1+\bb y)^\frac{2}{p-1}}, $&$ |\partial^2_{\bb ,y} \tilde F_0 (\bb  ,y)| \leq C\frac{(1-\bb^2)^{\frac{2-p}{p-1}}}{(1+\bb y)^{\frac{p+1}{p-1}}} .$\\
\end{tabular} 
\end{center}
Using this and Claim \ref{claim}, we see that (\ref{majoration1}) holds for $\tilde{F_0}$.
\end{proof}
\subsection{The conjugate operator $\tilde L_\bb ^*$}
In this step of the work of \cite{MR2362418}, the authors have computed  $\check L_\bb ^*$ by simple calculations using the definition of the conjugate, namely that $\phi(\check L_\bb(q),r)=\phi(q,\check  L_\bb ^*(r))$ and the fact that $\mathcal{L}$ is self-adjoint. By the same way, we introduce, in the following, the conjugate operator of $\tilde L_\bb $ with respect to $\phi$:
\begin{lem}{\bf (The conjugate operator of $\tilde L_\bb $ with respect to $\phi$)} For all $|\bb|< 1$, the operator $\tilde L_\bb ^*$ conjugate of $\tilde L_\bb $ with respect to $\phi$ is given by
\begin{equation}\label{L*_bb }
\tilde L_\bb ^*
\begin{pmatrix}
r_1\\
r_2\end{pmatrix}= \begin{pmatrix}
\tilde R_\bb (r_2)\\
-\mathcal{L} r_1+r_1+\frac{p+3}{p-1}r_2+2y r_2'-\frac{8}{(p-1)}\frac{r_2}{(1-y^2)}\end{pmatrix}
\end{equation}
for any $(r_1, r_2)\in (\mathcal{D}(\mathcal{L}))^2$, where $g=\tilde R_\bb  (r_2)$ is the unique solution of
\begin{equation}
-\mathcal{L} g+g=\mathcal{L} r_2+\tilde{\psi} (\bb , y)r_2. \label{sol}
\end{equation}
\end{lem}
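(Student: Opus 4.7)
The plan is to compute $\phi(\tilde L_\bb q, r)$ directly and rearrange it into the form $\phi(q, s)$, from which one reads off $s = \tilde L_\bb^* r$. Since $\tilde L_\bb$ differs from $\check L_\bb$ only by the replacement of the potential $\check\psi(\bb,y)$ by $\tilde\psi(\bb,y)$ in the second component, the derivation is formally identical to the one performed for $\check L_\bb^*$ in Section 4 of \cite{MR2362418}; I would follow that scheme and recover the same structure of formulas, modulo this change of potential.

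Starting from the second form of $\phi$ in (\ref{phi}) together with the definition (\ref{tildeL_bb }) of $\tilde L_\bb$, I would write
\[
\phi(\tilde L_\bb q, r) = \int_{-1}^1 \Bigl[ q_2(-\mathcal{L}\bar r_1 + \bar r_1) + \bigl(\mathcal{L} q_1 + \tilde\psi(\bb,y) q_1 - \tfrac{p+3}{p-1} q_2 - 2y \partial_y q_2\bigr)\bar r_2\Bigr]\rho(y)\, dy,
\]
and then perform two integrations by parts. The first exploits the self-adjointness of $\mathcal{L}$ in $L^2_\rho$ (an immediate consequence of its Sturm--Liouville form (\ref{8}) and the vanishing of $\rho$ at $y=\pm 1$), yielding $\int \mathcal{L} q_1\,\bar r_2\,\rho = \int q_1\, \mathcal{L}\bar r_2\,\rho$. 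The second moves the derivative off $q_2$ in the transport term, via
\[
-\int_{-1}^1 2y(\partial_y q_2)\,\bar r_2\,\rho\,dy = \int_{-1}^1 q_2\,\partial_y(2y\,\bar r_2\,\rho)\,dy,
\]
where the boundary terms again vanish through $\rho(\pm 1) = 0$, and where the identity $\rho'/\rho = -\frac{4y}{(p-1)(1-y^2)}$ turns the right-hand side into $\int q_2\bigl[2\bar r_2 + 2y\,\partial_y \bar r_2 - \frac{8y^2}{(p-1)(1-y^2)}\bar r_2\bigr]\rho\,dy$.

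Next I would group the resulting expression by $q_1$- and $q_2$-coefficients and match against $\phi(q, s) = \int q_1(-\mathcal{L}\bar s_1 + \bar s_1)\rho\,dy + \int q_2\bar s_2\,\rho\,dy$. The $q_1$-coefficient forces $s_1$ to solve $-\mathcal{L} s_1 + s_1 = \mathcal{L} r_2 + \tilde\psi(\bb,y)\,r_2$, which is exactly (\ref{sol}) defining $s_1 = \tilde R_\bb(r_2)$. The $q_2$-coefficient is obtained from the elementary identities $2 - \frac{p+3}{p-1} = \frac{p-5}{p-1}$ and $-\frac{8y^2}{(p-1)(1-y^2)} = \frac{8}{p-1} - \frac{8}{(p-1)(1-y^2)}$, whose combination $\frac{p-5}{p-1} + \frac{8}{p-1} = \frac{p+3}{p-1}$ yields precisely the expression for $s_2$ announced in (\ref{L*_bb }).

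The only step requiring some extra care is the well-posedness of (\ref{sol}), i.e.\ of the equation defining $\tilde R_\bb(r_2)$. I would treat it exactly as for $\check R_\bb$ in \cite{MR2362418}: the bilinear form associated with $-\mathcal{L} + \mathrm{Id}$ is precisely the $\mathcal{H}_0$-inner product (cf.\ (\ref{10})), which is trivially continuous and coercive, while the right-hand side $\mathcal{L} r_2 + \tilde\psi(\bb,\cdot) r_2$ lies in $L^2_\rho$ for any $r_2 \in \mathcal{D}(\mathcal{L})$ because $\tilde\psi(\bb,\cdot)$ is bounded on $(-1,1)$ when $|\bb|<1$. Lax--Milgram therefore provides a unique $g = \tilde R_\bb(r_2) \in \mathcal{H}_0$, completing the identification of $\tilde L_\bb^*$.
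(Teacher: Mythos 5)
Your proof is correct and follows exactly the scheme the paper invokes by citation: the paper's own proof is a one-line reference to Lemma 4.1 of \cite{MR2362418}, and what you wrote is precisely that computation (self-adjointness of $\mathcal{L}$ in $L^2_\rho$, integration by parts on the transport term with $\rho'/\rho=-\frac{4y}{(p-1)(1-y^2)}$, algebraic regrouping, and Lax--Milgram for the equation defining $\tilde R_\bb$), carried out with $\tilde\psi$ in place of $\check\psi$. Your arithmetic $-\frac{p+3}{p-1}+2-\frac{8y^2}{(p-1)(1-y^2)} = \frac{p+3}{p-1}-\frac{8}{(p-1)(1-y^2)}$ is right, and the well-posedness argument is a legitimate (and self-contained) substitute for the paper's appeal to Claim 4.5 of the reference.
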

\begin{proof} The proof is the same as the proof of Lemma 4.1 page 81 in \cite{MR2362418}.
\end{proof}
In the following, we give an eigenfunction of $\tilde L_\bb ^*$ associated to the eigenvalue $\lambda=0$.
\begin{lem}{\bf(Eigenfunction of $\tilde L_\bb ^*$ for the eigenvalue $\lambda=0$)} \label{3}
\item{(i)} (Existence) For all $|\bb|< 1$, there exists $\tilde W_0 \in \mathcal{H}$ continuous in terms of d such that $\tilde L_\bb ^* (\tilde W_0)=0$ where 
\begin{equation}\label{W}
\tilde W_{0, 2}(\bb  ,y)= \tilde c_0 \kappa(\bb , y) \mbox{ and }\frac{1}{\tilde c_0}=\frac{4\kappa_0^2}{p-1} \int_{-1}^1 \frac{\rho(y)}{1-y^2}dy
\end{equation}
and $\tilde W_{0, 1}$ is the unique solution of the equation
\begin{equation}\label{tildeW_1}
-\mathcal{L}g+ g =-\frac{p+3}{p-1} r_2- 2 y r'_2 + \frac{8}{p-1} \frac{r_2}{1-y^2}
\end{equation}
with $r_2= \tilde W_{0, 2}$. Moreover, we have
\begin{equation}\label{1}
\phi (\tilde W_0,\tilde{F_0})=1.
\end{equation}
\item{(ii)} {(Normalization)}
There exists $C_0 >0 $ such that for $|\bb|< 1$,
\begin{equation}\label{normalization}
||\tilde W_0(\bb,\cdot)||_{\mathcal{H}} \leq C_0 \;\mbox{  and  }\; ||\partial_\bb  \tilde W_0(\bb,\cdot)||_{\mathcal{H}} \leq \frac{C_0}{1-\bb^2}.
\end{equation}
\end{lem}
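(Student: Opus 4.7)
The plan is to exploit the lower-triangular structure of $\tilde L_\bb^*$ displayed in (\ref{L*_bb }): the first coordinate $\tilde R_\bb(r_2)$ depends only on $r_2$, so I would first pick $\tilde W_{0,2}$ in the kernel of $\tilde R_\bb$, and then solve an elliptic equation for $\tilde W_{0,1}$. For the first step, by (\ref{sol}), $\tilde R_\bb(r_2)=0$ is equivalent to $\mathcal L r_2 + \tilde\psi(\bb,\cdot)r_2 = 0$; substituting $\tilde\psi = \kappa^{p-1} - \frac{2(p+1)}{(p-1)^2}$ reduces this to the stationary equation (\ref{47}), so one may take $\tilde W_{0,2} = \tilde c_0\,\kappa(\bb,\cdot)$ for any constant $\tilde c_0$, to be fixed later. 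For the second step, (\ref{tildeW_1}) is of the form $(-\mathcal L + \mathrm{Id})g = h$; since $-\mathcal L + \mathrm{Id}$ is a positive self-adjoint bijection between $\mathcal H_0$ and $L^2_\rho$, and since $h$ lies in $L^2_\rho$ by the explicit form of $\kappa$ combined with Claim \ref{claim} (the latter controls the $r_2/(1-y^2)$ term thanks to the exponent of $\rho$), this yields a unique $\tilde W_{0,1}\in\mathcal H_0$ depending continuously on $\bb$.

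The core computation is the normalization $\phi(\tilde W_0, \tilde F_0)=1$. Using (\ref{phi}) with $\tilde F_0=(\kappa,0)$ gives
\[
\phi(\tilde W_0, \tilde F_0) = \int_{-1}^1 \tilde W_{0,1}\,(-\mathcal L\kappa + \kappa)\,\rho\,dy,
\]
and by self-adjointness of $-\mathcal L + \mathrm{Id}$ together with (\ref{tildeW_1}) this equals
\[
\tilde c_0 \int_{-1}^1 \kappa\left(-\frac{p+3}{p-1}\kappa - 2y\kappa' + \frac{8}{p-1}\frac{\kappa}{1-y^2}\right)\rho\,dy.
\]
I would then integrate $-2y\kappa'\kappa = -y(\kappa^2)'$ by parts, using $\rho'(y) = -\frac{4y}{p-1}\rho(y)/(1-y^2)$ and the vanishing of $\rho$ at $\pm 1$; the algebraic identities $1 - \frac{p+3}{p-1} = -\frac{4}{p-1}$ and $(2-y^2)/(1-y^2) = 1 + 1/(1-y^2)$ collapse the expression to $\tilde c_0 \cdot \tfrac{4}{p-1}\int_{-1}^1 \kappa(\bb,y)^2\rho(y)/(1-y^2)\,dy$. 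The crucial observation is that this surviving integral is conformally invariant: under $\tilde y = (y+\bb)/(1+\bb y)$, the identity $1-\tilde y^2 = (1-y^2)(1-\bb^2)/(1+\bb y)^2$ combined with (\ref{defk}) reduces it to $\kappa_0^2\int_{-1}^1 \rho(\tilde y)/(1-\tilde y^2)\,d\tilde y$, independent of $\bb$. Defining $\tilde c_0$ as in (\ref{W}) then yields exactly $\phi(\tilde W_0, \tilde F_0) = 1$.

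For part (ii), the first bound in (\ref{normalization}) on $\tilde W_{0,2} = \tilde c_0 \kappa$ follows from Claim \ref{claim} applied to the explicit integrals defining $\|\kappa\|_{L^2_\rho}$, paralleling the bound on $\tilde F_0$ in Lemma \ref{fonctions_propres}(ii). The bound on $\tilde W_{0,1}$ follows by inverting $-\mathcal L + \mathrm{Id}$ in (\ref{tildeW_1}): the elliptic estimate $\|g\|_{\mathcal H_0}\le C\|(-\mathcal L + \mathrm{Id})g\|_{L^2_\rho}$ reduces the claim to bounding the right-hand side of (\ref{tildeW_1}) in $L^2_\rho$, which is again a matter of Claim \ref{claim}. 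The $\bb$-derivative bounds come from formally differentiating (\ref{tildeW_1}) and (\ref{W}) in $\bb$ and applying the same resolvent estimate; the rate $1/(1-\bb^2)$ is inherited from the factor $(1-\bb^2)^{(2-p)/(p-1)}$ that appears when differentiating $\kappa$ in $\bb$, exactly as in the proof of Lemma \ref{fonctions_propres}(ii).

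The main obstacle I anticipate is the algebraic calculation yielding $\tilde c_0$: one must execute the integration by parts correctly, spot the two simplifications that collapse the bulk of the integrand, and then recognize the hidden conformal invariance of the surviving integral. Without the last point, the normalizing constant would a priori depend on $\bb$, complicating both the statement of the lemma and its subsequent use in the modulation and projection arguments.
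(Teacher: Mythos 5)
Your overall route matches the paper's: take $\tilde W_{0,2}=\tilde c_0\kappa$, observing that the kernel condition $\tilde R_\bb(r_2)=0$ reduces to $\mathcal{L}r_2+\tilde\psi(\bb,\cdot)r_2=0$ which $\kappa$ satisfies because it solves the stationary equation; solve (\ref{tildeW_1}) for $\tilde W_{0,1}$; then fix $\tilde c_0$ by computing $\phi(\tilde W_0,\tilde F_0)$ via self-adjointness, substitution of (\ref{tildeW_1}), integration by parts, and the Lorentz change of variable that makes the surviving integral $\bb$-independent. The algebraic steps you sketch are correct and reproduce the paper's computation.

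There is, however, a genuine gap in how you invoke elliptic theory. You treat $-\mathcal{L}+\mathrm{Id}$ as a bijection from $\mathcal{H}_0$ to $L^2_\rho$ and claim that the right-hand side of (\ref{tildeW_1}) (with $r_2=\tilde c_0\kappa$) lies in $L^2_\rho$ ``by Claim~\ref{claim}.'' This fails once $p\ge3$: the singular term $\kappa/(1-y^2)$ has $\int\kappa^2\rho/(1-y^2)^2\,dy$ governed by the weight $(1-y^2)^{2/(p-1)-2}$, whose exponent drops below $-1$ at $p=3$, so the integral diverges and Claim~\ref{claim} does not even apply ($\alpha>-1$ is required). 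Since the profile theorems hold for every $p>1$ in one space dimension, this matters. The paper sidesteps it through Claim~\ref{mimo}, whose content is precisely that $-\mathcal{L}+\mathrm{Id}$ is an isomorphism from $\mathcal{H}_0$ onto its dual $\mathcal{H}_0'$ and that the specific right-hand side of (\ref{tildeW_1}) belongs to $\mathcal{H}_0'$ whenever $r_2\in\mathcal{H}_0$: the pairing $\int\frac{r_2\,\varphi}{1-y^2}\rho\,dy$ is controlled by the Hardy--Sobolev inequality (Lemma~\ref{2.2}), even though $r_2/(1-y^2)$ need not be in $L^2_\rho$. Consequently, the resolvent estimate you use in part (ii), $\|g\|_{\mathcal{H}_0}\le C\|(-\mathcal{L}+\mathrm{Id})g\|_{L^2_\rho}$, should be replaced by the bound furnished by Claim~\ref{mimo}, namely $\|g\|_{\mathcal{H}_0}\le C\|r_2\|_{\mathcal{H}_0}$ (and likewise for $\partial_\bb g$ with $r_2$ replaced by $\partial_\bb r_2$), which is what the paper actually uses to prove (\ref{normalization}).
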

Before proving this Lemma, let us recall the result from \cite{MR2362418}.
 \begin{cl}\label{mimo}
  For any $r_2\in {\mathcal{H}_0}$, the equation (\ref{tildeW_1}) has a unique solution $g\in {\mathcal{H}_0}$ (\ref{10}) such that
$$||g||_{\mathcal{H}_0}\le C ||r_2||_{\mathcal{H}_0}.$$
 \end{cl}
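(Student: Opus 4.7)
The plan is to establish existence, uniqueness, and the quantitative bound by treating (\ref{tildeW_1}) as a weak elliptic problem on the Hilbert space $\mathcal{H}_0$ and invoking the Lax-Milgram theorem. Writing $F = -\frac{p+3}{p-1}r_2 - 2yr_2' + \frac{8}{p-1}\frac{r_2}{1-y^2}$ for the right-hand side, I would multiply $-\mathcal{L}g+g=F$ by $\bar\phi\rho$ for a test function $\phi\in\mathcal{H}_0$ and integrate by parts, using the self-adjointness of $\mathcal{L}$ with respect to $\rho\,dy$ visible in (\ref{8}) and (\ref{phi}). This produces the weak formulation
\[
a(g,\phi):=\int_{-1}^{1}\bigl(g'\bar\phi'(1-y^2)+g\bar\phi\bigr)\rho\,dy \;=\; \ell_{r_2}(\phi):=\int_{-1}^{1}F\bar\phi\,\rho\,dy.
\]
Since $a(g,g)=\|g\|_{\mathcal{H}_0}^2$ by definition (\ref{10}), the form $a$ is automatically Hermitian, continuous, and coercive on $\mathcal{H}_0$. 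The whole problem thus reduces to proving that $\ell_{r_2}$ is a continuous antilinear functional on $\mathcal{H}_0$ with norm controlled by $\|r_2\|_{\mathcal{H}_0}$.

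To estimate $\ell_{r_2}(\phi)$ I would decompose it into the three pieces of $F$. The zeroth-order term yields $\int r_2\bar\phi\,\rho$, bounded directly by $\|r_2\|_{L^2_\rho}\|\phi\|_{L^2_\rho}\le\|r_2\|_{\mathcal{H}_0}\|\phi\|_{\mathcal{H}_0}$. For the term $-2\int yr_2'\bar\phi\,\rho$ I would integrate by parts to transfer the derivative onto $\bar\phi\rho$; using $\rho'/\rho=-\tfrac{4y}{(p-1)(1-y^2)}$ this produces one benign contribution involving $r_2\bar\phi'$ (controlled by Cauchy--Schwarz against the $\mathcal{H}_0$-norm, since $|y|\le 1$ and the weight $(1-y^2)$ is already built into the norm) together with a residual contribution of the form $\int\frac{y^2 r_2\bar\phi}{1-y^2}\rho\,dy$. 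This residual combines with the explicit $\frac{r_2}{1-y^2}$ term of $F$, so the essential task is to bound $\int\frac{|r_2|\,|\phi|}{1-y^2}\rho\,dy$. Here I would invoke a weighted Hardy-type inequality for the measure $\rho\,dy$: since $\rho/(1-y^2)=(1-y^2)^{\frac{2}{p-1}-1}$ still gives sufficient decay at $y=\pm 1$, one obtains $\int \frac{|r|^2}{1-y^2}\rho\,dy\le C\|r\|_{\mathcal{H}_0}^2$ for every $r\in\mathcal{H}_0$, and Cauchy--Schwarz finishes the estimate.

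The main obstacle is precisely this singular $(1-y^2)^{-1}$ factor, which is not dominated by the $\mathcal{H}_0$-norm in a trivial way and demands the weighted Hardy inequality described above; this is the only place where the structure of $\rho$ near the boundary is genuinely used. Once continuity of $\ell_{r_2}$ is established, Lax-Milgram produces a unique $g\in\mathcal{H}_0$ satisfying the weak formulation, with $\|g\|_{\mathcal{H}_0}\le C\|r_2\|_{\mathcal{H}_0}$, and standard Sturm-Liouville interior regularity for $-\mathcal{L}+I$ on $(-1,1)$ then promotes $g$ to a classical solution of (\ref{tildeW_1}), closing the proof.
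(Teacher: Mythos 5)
Your Lax--Milgram strategy is sound, and since the paper's own ``proof'' of Claim~\ref{mimo} is simply a citation to Claim~4.5 of \cite{MR2362418}, your write-up essentially supplies the standard weak-formulation argument that the cited claim uses. The coercivity identity $a(g,g)=\|g\|_{\mathcal{H}_0}^2$ is correct, Lemma~\ref{2.2} is exactly the weighted Hardy--Sobolev inequality you need, and the whole problem does reduce to showing that $F$ defines a bounded functional on $\mathcal{H}_0$.

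However, you have allocated the Hardy inequality to the wrong term, and your justification for the piece you call ``benign'' is actually incorrect as stated. After integrating $-2\int y r_2'\bar\phi\,\rho$ by parts, the residual $-\frac{8}{p-1}\int\frac{y^2 r_2\bar\phi}{1-y^2}\rho\,dy$ cancels \emph{exactly} against the explicit $\frac{8}{p-1}\int\frac{r_2\bar\phi}{1-y^2}\rho\,dy$ of $F$ (since $\frac{1-y^2}{1-y^2}=1$); tracking all coefficients one finds $\int F\bar\phi\rho = \frac{p+3}{p-1}\int r_2\bar\phi\rho + 2\int y r_2\bar\phi'\rho$, so the singular factor has vanished. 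The remaining $\int y r_2\bar\phi'\rho$ is the one that genuinely requires Lemma~\ref{2.2}: it is \emph{not} controlled by na\"ive Cauchy--Schwarz because $\int|\phi'|^2\rho$ is generically infinite (the $\mathcal{H}_0$-norm only controls $\int|\phi'|^2(1-y^2)\rho$, a strictly weaker quantity near $\pm 1$). You must split the weight as $\bigl(\int\frac{|r_2|^2}{1-y^2}\rho\bigr)^{1/2}\bigl(\int|\phi'|^2(1-y^2)\rho\bigr)^{1/2}$ and apply Lemma~\ref{2.2} to $r_2$. Cleaner still, and avoiding both the sign-tracking and the boundary terms of the integration by parts: estimate $\bigl|\int y r_2'\bar\phi\rho\bigr|$ directly by $\bigl(\int|r_2'|^2(1-y^2)\rho\bigr)^{1/2}\bigl(\int\frac{|\phi|^2}{1-y^2}\rho\bigr)^{1/2}\le C\|r_2\|_{\mathcal{H}_0}\|\phi\|_{\mathcal{H}_0}$, and bound $\bigl|\int\frac{r_2\bar\phi}{1-y^2}\rho\bigr|$ by sharing one factor of $(1-y^2)^{-1/2}$ with each of $r_2$ and $\phi$ and invoking the same Hardy--Sobolev bound. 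Since the inequality is available in both places, the argument closes; you just need to relocate where it is invoked.
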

\begin{proof} See Claim 4.5 page 86 in \cite{MR2362418}.
\end{proof}
\begin{proof}[Proof of Lemma \ref{3}]
\item {(i)} From the definition of $\tilde L_\bb ^*$ (\ref{L*_bb }), $\tilde W_{0}(\bb,\cdot)=(\tilde W_{0, 1}(\bb,\cdot),\tilde W_{0, 2}(\bb,\cdot))$ is an eigenfunction for the eigenvalue $\lambda=0$ if and only if
\begin{numcases}
 \,\tilde{R}_\bb (\tilde W_{0, 2}(\bb,\cdot))=0,  \label{R} \\
-\mathcal{L} \tilde W_{0, 1}(\bb,\cdot)+\tilde W_{0, 1}(\bb,\cdot)+ \frac{p+3}{p-1}\tilde W_{0, 2}(\bb,\cdot)+2y \partial_y \tilde W_{0, 2}(\bb,\cdot)-\frac{8}{(p-1)}\frac{\tilde W_{0, 2}(\bb,\cdot)}{(1-y^2)}=0.\label{eqtildewd}
\end{numcases}
Note that $\tilde R_\bb (r_2)$ is the unique solution of (\ref{sol}). Therefore, if $\tilde W_0$ is a solution of $(\ref{R})$-$(\ref{eqtildewd})$, then we have
$$\mathcal{L}\tilde W_{0, 2}(\bb,\cdot)+\tilde \psi(\bb,\cdot) \tilde  W_{0, 2}(\bb,\cdot)=0. $$
Note also that, since $\kappa(\bb , y)$ is a stationary solution of equation (\ref{equa}), it follows that
\begin{equation}
\mathcal{L}\kappa(\bb , y)+\tilde \psi \kappa(\bb , y)=0.
 \end{equation}
This suggests that we take $\tilde W_{0, 2}(\bb , y)=\tilde c_0(\bb )\kappa(\bb , y)$ with $\tilde c_0(\bb )  \ne0$ and $\tilde W_{0, 1}$ the unique solution of (\ref{eqtildewd}) (note that $\kappa(\bb ,\cdot) \in \mathcal{H}_0$ by definition (\ref{defk}) and use Claim \ref{mimo} for the existence and uniqueness of $\tilde W_{0, 1}(\bb,\cdot)$).
In this step, we will try to normalize $\tilde W_{0}$. From the definition of $\phi$ (\ref{phi}), Lemma $\ref{fonctions_propres}$ and (\ref{tildeW_1}), we write
\begin{eqnarray*}
\phi (\tilde W_0(\bb ,\cdot), \tilde F_0(\bb ,\cdot))&=&\int_{-1}^{1}((-\mathcal{L} \tilde W_{0, 1}(\bb,y)+\tilde W_{0, 1}(\bb,y)) \kappa(\bb ,y)+\tilde W_{0, 1}(\bb,y) \kappa(\bb ,y) )  \rho (y)dy \\
&=&\int_{-1}^{1}(-\frac{p+3}{p-1} \tilde W_{0, 2}(\bb,y)-2 y \tilde W_{0, 2}^{'}(\bb,y)+\frac{8}{p-1} \frac{\tilde W_{0, 2}(\bb,y)}{(1-y^2)}) \kappa(\bb ,y) \rho(y) dy.
\end{eqnarray*}
Note in particular that $\tilde W_{0, 2}(\bb ,y)= \tilde c_0(\bb )\kappa(\bb ,y) $, so
\begin{align*}
\phi &(\tilde W_0(\bb ,\cdot), \tilde F_0(\bb ,\cdot))\\&= \tilde c_0(\bb ) \left[\int_{-1}^{1}(-\frac{p+3}{p-1}+\frac{8}{(p-1)(1-y^2)} ) \kappa^2(\bb ,y) \rho(y) dy+\int_{-1}^{1}\kappa^2(\bb ,y)(y\rho(y))' dy \right] \\
&=\tilde c_0(\bb )\int_{-1}^{1}\left(-\frac{p+3}{p-1}+\frac{8}{(p-1)(1-y^2)}+1-\frac{4y^2}{(p-1)(1-y^2)}\right)\kappa^2(\bb ,y) \rho(y) dy\\
&= \tilde c_0(\bb )\frac{4}{p-1} \kappa_0^2(1-\bb^2)^\frac{2}{p-1} \int_{-1}^{1} \frac{1}{(1+\bb y)^{\frac{4}{p-1}}} \frac{\rho(y)}{1-y^2} dy.
\end{align*}
Performing the change of variable $Y=\frac{y+\bb }{1+\bb y}$, we get
\begin{eqnarray*}
\phi (\tilde W_0(\bb ,\cdot), \tilde F_0(\bb ,\cdot))=\tilde c_0(\bb )\frac{4}{p-1} \kappa_0^2 \int_{-1}^{1} (1-Y^2)^{\frac{3-p}{p-1}}\, dY.
\end{eqnarray*} Therefore, in order to get $\phi (\tilde W_0, \tilde F_0)=1,$ it is enough to fix $\tilde c_0(\bb )$ as a positive constant independent from $\bb$ as stated in (\ref{W}).
\item {(ii)\it (Normalization)} Since $\tilde W_{0,1}$ and $\partial_\bb  \tilde W_{0,1}$ are solutions to equation. (\ref{tildeW_1}) respectively with $r_2=\tilde W_{0,2}$ and $r_2=\partial_\bb  \tilde W_{0,2}$, we see from Claim \ref{mimo} that for all $|\bb|<1,$
\begin{equation}\label{128}
||\tilde W_0||_{\mathcal{H}} \leq C_0||\tilde W_{0,2}||_{\mathcal{H}_0} \;\mbox{  and  }\; ||\partial_\bb  \tilde W_0||_{\mathcal{H}} \leq C_0 ||\partial_\bb  \tilde W_{0,2}||_{\mathcal{H}_0}.
\end{equation}
Using (\ref{naro}) together with the definition of $\tilde W_{0,2}$ and straightforward computations, we see that for all $|\bb|<1$ and $|y|<1$,
\begin{center}
\begin{tabular}{ll}
  $|\tilde W_{0,2} (\bb  ,y)| \leq C\frac{(1-\bb^2)^\frac{1}{p-1}}{(1+\bb y)^\frac{2}{p-1}}, $&$ |\partial_y \tilde W_{0,2} (\bb  ,y)| \leq C\frac{(1-\bb^2)^\frac{1}{p-1}}{(1+\bb y)^{\frac{p+1}{p-1}}} ,$ 
\end{tabular}
\begin{tabular}{ll} 
 $|\partial_\bb  \tilde W_{0,2}(\bb  ,y)| \leq C\frac{(1-\bb^2)^{\frac{2-p}{p-1}}}{(1+\bb y)^\frac{2}{p-1}}, $&$ |\partial^2_{\bb ,y} \tilde W_{0,2}(\bb  ,y)| \leq C\frac{(1-\bb^2)^{\frac{2-p}{p-1}}}{(1+\bb y)^{\frac{p+1}{p-1}}} .$\\
\end{tabular}
\end{center}
Since we have by this, by Claim \ref{claim} and by the definition of the norm in ${\mathcal{H}_0},$ $||\tilde W_{0, 2}||_{\mathcal{H}_0}+(1-\bb^2)||\partial_\bb  \tilde W_{0, 2}||_{\mathcal{H}_0}\leq C_0,$ we see that (\ref{normalization}) follows by (\ref{128}). This concludes the proof of Lemma \ref{3}.
\end{proof}
\subsection{Expansion of $q$ with respect to the eigenspaces of $\tilde L_\bb $}
In the following, we expand any $r \in \mathcal{H}$ with respect to the eigenspaces of $\tilde L_\bb $ partially computed in Lemma \ref{fonctions_propres}. We claim the following:
\begin{definition}\label{4.6}
{\bf(Expansion of $r$ with respect to the eigenspaces of $\tilde L_\bb $)}. Consider $r \in \mathcal{H}$ and introduce 
\begin{equation} \label{pi}
\tilde \pi_0^\bb  (r)=\phi (\tilde W_0(\bb,\cdot), r), 
\end{equation}
where $\tilde W_0(\bb,\cdot)$ is the eigenfunction of $\tilde L_\bb ^*$ computed in Lemma \ref{3}, and $\tilde \pi_{-}^\bb (r) $ is defined by 
\begin{equation} \label{s}
r=\tilde \pi_0^\bb  (r) \tilde F_0(\bb  ,\cdot)+ \tilde \pi_{-}^\bb  (r).
\end{equation}
\end{definition}
Applying the operator $\tilde \pi_{0}^\bb $ to (\ref{s}), we write
$$\tilde \pi_{0}^\bb  (r)= \tilde \pi_{0}^\bb  (r)\tilde \pi_{0}^\bb  (\tilde F_0(\bb,\cdot))+\tilde \pi_{0}^\bb  (\tilde \pi_{-}^\bb  (r)).$$ 
By (\ref{1}), 
$$\tilde \pi_{0}^\bb  (\tilde F_0(\bb,\cdot))=\phi (\tilde W_0(\bb,\cdot),\tilde F_0(\bb,\cdot))=1,$$
therefore
\begin{equation*}
\phi (\tilde W_0(\bb,\cdot), \tilde \pi_{-}^\bb  (r))=\tilde \pi_{0}^\bb  (\tilde \pi_{-}^\bb  (r))=0.
\end{equation*}
Thus, we have
\begin{equation}\label{ss}
\tilde \pi_{-}^\bb  (r) \in \tilde{\mathcal{H}}_{-}^\bb  \equiv \{r \in \mathcal{H} \,|\,  \tilde \pi_{0}^\bb  (r)=0\}.
\end{equation}
{\bf Remark.} Note that if $r \in \tilde{\mathcal{H}}_{-}^\bb  ,$ then $\tilde \pi_{-}^\bb  (r)= r$ (just use (\ref{s}) and (\ref{ss})) and $\tilde L_\bb  r \in\tilde{ \mathcal{H}}_{-}^\bb  .$ Indeed, using the definition of $\pi_0^\bb $ (\ref{pi}), the definition of $\tilde L_\bb ^*$ and Lemma \ref{3}, we write $\tilde \pi_{0}^\bb  (\tilde L_\bb  r)=\phi (\tilde W_0(\bb,\cdot),\tilde L_\bb  r)=\phi (\tilde L^*_\bb \tilde W_0(\bb,\cdot), r)=0.$ Moreover $\tilde \pi_{-}^\bb  (\tilde F_0 (\bb,\cdot))=0$ (just use (\ref{s}) with $r=\tilde F_0 (\bb,\cdot)$ and (\ref{1})).

\noindent{\bf Remark.} Note that $\tilde \pi_{0}^\bb  (r)$ is the projection of $r$ on the eigenfunction of $\tilde L_\bb $ associated to $\lambda=0$, and $\tilde \pi_{-}^\bb  (r)$ is the negative part of $r$.
\subsection{Equivalent norms on $\mathcal{H}$ and $\tilde{\mathcal{H}}^\bb _{-}$ adapted to the dispersive structure}
We introduce 
\begin{eqnarray}\label{134}
 \tilde \varphi_{\bb } (q,r)&=&\int_{-1}^{1} (-\tilde \psi(\bb ,\cdot) q_1r_1+q_1' r_1'(1-y^2)+q_2 r_2 ) \rho dy,\notag\\
&=&\int_{-1}^{1} (- q_1(\mathcal{L}r_1+\tilde \psi(\bb ,\cdot)r_1)+q_2 r_2 ) \rho dy.\label{135}
\end{eqnarray}

\begin{prop}\label{4.7} {\bf(Equivalence in $\tilde{\mathcal{H}}^\bb _{-}$ of the $\mathcal{H}$ norm and the $\tilde \varphi_\bb $ norm)} There exists $C_0 >0$ such that for all $|\bb|<1,$ the following holds: \item{(i)} {\bf(Equivalence of norms in $\tilde{\mathcal{H}}^\bb _{-}$)} For all $r\in \tilde{\mathcal{H}}^\bb _{-},$
$$\frac{1}{C_0} ||r||_{\mathcal{H} }^2\le \tilde \varphi_\bb  (r,r)\le C_0 ||r||_\mathcal{H} ^2.$$
\item{(ii)} {\bf(Equivalence of norms in $\mathcal{H}$)} For all $r\in \mathcal{H},$
$$\frac{1}{C_0} ||r||_{\mathcal{H} }\le\left(|\tilde \pi_{0}^\bb  (r)| +\sqrt{ \tilde \varphi_\bb  (r_{-},r_{-})}\right)\le C_0 ||r||_\mathcal{H}\mbox{ where }r_{-}=\tilde \pi_{-}^\bb  (r) .$$ 
\end{prop}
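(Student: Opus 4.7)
The plan is to establish (i) first, then deduce (ii) routinely from (i) combined with the bounds on $\tilde F_0$ and $\tilde W_0$ from Lemmas \ref{fonctions_propres} and \ref{3}. For the reduction of (ii) to (i): given $r \in \mathcal H$, decompose $r = \tilde\pi_0^\bb(r)\tilde F_0(\bb,\cdot) + r_-$ with $r_- = \tilde\pi_-^\bb(r) \in \tilde{\mathcal H}_-^\bb$ as in (\ref{s}). Cauchy--Schwarz in $\phi$ together with (\ref{normalization}) gives $|\tilde\pi_0^\bb(r)| \le \|\tilde W_0\|_\mathcal H \|r\|_\mathcal H \le C\|r\|_\mathcal H$, while (\ref{majoration1}) and the triangle inequality give $\|r_-\|_\mathcal H \le \|r\|_\mathcal H + |\tilde\pi_0^\bb(r)| \|\tilde F_0\|_\mathcal H \le C\|r\|_\mathcal H$. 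Combining these with (i) applied to $r_-$, and the triangle inequality $\|r\|_\mathcal H \le |\tilde\pi_0^\bb(r)|\|\tilde F_0\|_\mathcal H + \|r_-\|_\mathcal H$, yields both inequalities of (ii).

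For the upper bound in (i), I would expand (\ref{134}) as $\tilde\varphi_\bb(r,r) = \int_{-1}^1(-\tilde\psi(\bb,y)|r_1|^2 + |r_1'|^2(1-y^2) + |r_2|^2)\rho\,dy$. The last two integrands are directly controlled by the $\mathcal H$-density, giving a bound by $\|r\|_\mathcal H^2$. For the first, the identity $\tilde\psi(\bb,y) = \kappa(\bb,y)^{p-1} - \frac{2(p+1)}{(p-1)^2}$ and the explicit formula $\kappa(\bb,y)^{p-1} = \kappa_0^{p-1}(1-\bb^2)(1+\bb y)^{-2}$ split the contribution: the constant part yields $C\|r_1\|^2_{L^2_\rho}$, and the $\kappa^{p-1}$ part will be controlled, using Claim \ref{claim} exactly as in \cite{MR2362418}, by $C\|r_1\|^2_{\mathcal H_0}$ with $C$ independent of $\bb$.

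The lower bound of (i) is the main obstacle and carries the real content. I would mimic the argument used in \cite{MR2362418} for the analogous coercivity (\ref{36,5}) of $\check\varphi_\bb$. The key spectral input is that $(\mathcal L + \tilde\psi(\bb,\cdot))\kappa(\bb,\cdot) = 0$ together with the positivity $\kappa(\bb,\cdot) > 0$: by Sturm--Liouville theory, $0$ is the simple ground-state eigenvalue of the self-adjoint operator $-(\mathcal L + \tilde\psi(\bb,\cdot))$ on $L^2_\rho$, with eigenfunction $\kappa(\bb,\cdot)$, and there is a spectral gap $\lambda_1 > 0$ to the next eigenvalue. After integration by parts, as in (\ref{135}), one has $\tilde\varphi_\bb(r,r) = -\int(\mathcal L + \tilde\psi)r_1 \cdot \overline{r_1}\rho + \|r_2\|^2_{L^2_\rho}$, so whenever $r_1$ is $L^2_\rho$-orthogonal to $\kappa(\bb,\cdot)$ the spectral gap produces coercivity of the $r_1$-part, and $\|r_2\|^2_{L^2_\rho}$ handles the rest.

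The constraint defining $\tilde{\mathcal H}_-^\bb$ is not exactly $\int r_1\kappa\rho = 0$ but the single linear relation $\phi(\tilde W_0(\bb,\cdot),r) = 0$, which via (\ref{tildeW_1}) and $\tilde W_{0,2} = \tilde c_0 \kappa$ couples $r_1$ and $r_2$. A compactness/perturbation argument of the kind used in the real case then transfers coercivity from the $L^2_\rho$-spectral complement of $\kappa(\bb,\cdot)$ to the space $\tilde{\mathcal H}_-^\bb$. The hardest technical point will be to obtain a constant $C_0$ uniform in $|\bb|<1$, since near $|\bb|=1$ both the potential $\tilde\psi$ and the measure $\rho\,dy$ concentrate. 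I would handle this by first treating $\bb = 0$ (where $\tilde\psi(0,\cdot) \equiv 0$ because $\kappa_0^{p-1} = 2(p+1)/(p-1)^2$, reducing the problem to a weighted Poincar\'e inequality on the complement of $\kappa(0,\cdot)$) and then transferring to arbitrary $\bb$ via the Lorentz transformation $\mathcal T_\bb$, which intertwines $\tilde L_0$ with $\tilde L_\bb$ and maps $\kappa(0,\cdot)$ to $\kappa(\bb,\cdot)$, all constants being controlled uniformly via Claim \ref{claim} and the estimates (\ref{majoration1})--(\ref{normalization}).
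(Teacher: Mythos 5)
Your reduction of (ii) to (i) is correct and is essentially what the paper does, and your upper bound in (i) is also fine. The issue is with the lower bound in (i), where your plan diverges substantially from the paper's and contains a gap that is not easy to close.

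You propose to (a) get coercivity under the constraint $\int r_1\kappa(\bb,\cdot)\rho=0$ via the spectral gap of $\mathcal L+\tilde\psi$, (b) transfer it to the actual constraint $\phi(\tilde W_0(\bb,\cdot),r)=0$ defining $\tilde{\mathcal H}^\bb_-$ by ``a compactness/perturbation argument,'' and (c) get uniformity in $\bb$ by treating $\bb=0$ first and then pushing forward with $\mathcal T_\bb$. Step (b) is where the real work is and it cannot be waved away: coercivity of a quadratic form on one codimension-one subspace gives no automatic information about its sign on a different codimension-one subspace, and the two constraints here are genuinely different (the one defining $\tilde{\mathcal H}^\bb_-$ couples $r_1$ \emph{and} $r_2$ through (\ref{tildeW_1})). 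The real case in \cite{MR2362418} does not use compactness either; its Proposition~4.7 (which the paper imitates for $\tilde\varphi_\bb$) achieves the transfer by an explicit finite-dimensional argument, not by a soft perturbation. Step (c) has a more basic obstruction: $\mathcal T_\bb$ of Lemma~\ref{2.6} maps solutions $w(y,s)$ to solutions $W(Y,S)$ with $S$ depending on $Y$, so it does \emph{not} induce a clean map on $\mathcal H=\mathcal H_0\times L^2_\rho$ (a fixed time slice is sent to a slanted slice, mixing $w$ and $\partial_sw$). The intertwining you want (``$\mathcal T_\bb$ intertwines $\tilde L_0$ with $\tilde L_\bb$'') holds only for the scalar first-component operator $\mathcal L+\tilde\psi$; it does not carry $\tilde{\mathcal H}^0_-$ to $\tilde{\mathcal H}^\bb_-$ with uniform constants.

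The paper's actual route is Lemma~\ref{4.8}: introduce the $\epsilon$-perturbed form $\tilde\varphi_{\bb,\epsilon}$ of (\ref{137}), prove (Lemma~\ref{4.9}) that it is nonnegative on the explicit hyperplane $E_1=\{\int\mathcal T_{-\bb}(q_1)\rho=0\}$ (only the \emph{first}-component bilinear form is pushed through $\mathcal T_\bb$ here, which avoids the slanting issue), construct (Lemma~\ref{4.10}) a vector $\tilde V_0(\bb,\epsilon,\cdot)$ ``$\tilde\varphi_{\bb,\epsilon}$-orthogonal'' to $\tilde{\mathcal H}^\bb_-$ on which $\tilde\varphi_{\bb,\epsilon}$ is negative, and conclude by a two-dimensional intersection/contradiction argument (Part~3). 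The $\epsilon$-perturbation is precisely what converts the soft ``nonnegative'' information on $E_1$ into the uniform-in-$\bb$ quantitative coercivity of $\tilde\varphi_\bb$ on $\tilde{\mathcal H}^\bb_-$. If you want to pursue your route you would need, at minimum, to make step (b) rigorous without $\mathcal T_\bb$ on $\mathcal H$, and to extract uniformity in $\bb$ by some other mechanism; as it stands the proposal has a genuine gap at exactly the point the paper's $\tilde\varphi_{\bb,\epsilon}$-construction was designed to address.
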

We introduce for all $\epsilon >0$ 
\begin{eqnarray}\label{137}
  \tilde \varphi_{\bb ,\epsilon} (q,r)&=&\int_{-1}^{1} q_1 \left( -(1-\epsilon) \mathcal{L} r_1+\left( -(1-\epsilon)\tilde \psi(\bb ,y)-\epsilon  \frac{2 p (p+1)}{(p-1)^2}\frac{(1-\bb^2)}{(1+\bb y)^2} \right) r_1 \right) \rho dy \notag\\&+&(1-\epsilon)\int_{-1}^{1} q_2 r_2 \rho dy.
\end{eqnarray}
To prove this proposition, we use the following:
\begin{lem}\label{4.8}{\bf (Reduction of the proof of Proposition \ref{4.7})} There exists $\epsilon_0 \in (0,1)$ such that for all $|\bb|<1$ and $r\in \tilde{\mathcal{H}}^\bb _{-},$ $\varphi_{\bb ,\epsilon_0}(r,r)\ge0.$
\end{lem}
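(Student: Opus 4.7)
The plan is to follow the strategy that Merle and Zaag used for the analogous form $\check\varphi_\bb$ (their Proposition 4.7 in \cite{MR2362418}), reducing to the base case $\bb = 0$ and then exploiting a spectral gap for the operator $-\mathcal{L}$.

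First I would exploit the fact that the family $\{\kappa(\bb,\cdot)\}_{|\bb|<1}$ is obtained from $\kappa_0$ via the Lorentz-type transformation $\mathcal{T}_\bb$, and that the weight $\frac{1-\bb^2}{(1+\bb y)^2}$ appearing in (\ref{137}) is precisely the Jacobian of the change of variable $Y = (y+\bb)/(1+\bb y)$. Combined with the conformal covariance of $\mathcal{L}$ with respect to $\rho$ and the identity $\kappa(\bb,y)^{p-1}=\kappa_0^{p-1}(1-\bb^2)(1+\bb y)^{-2}$, this strongly suggests that the form $\tilde\varphi_{\bb,\epsilon}$ transforms into $\tilde\varphi_{0,\epsilon}$ (up to multiplicative normalization) and that the subspace $\tilde{\mathcal H}_-^\bb$ is mapped onto $\tilde{\mathcal H}_-^0$. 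After verifying this covariance, it is enough to prove the nonnegativity in the case $\bb = 0$.

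Second, at $\bb = 0$ we have $\kappa(0,\cdot) \equiv \kappa_0$ and $\tilde\psi(0,\cdot) \equiv 0$ (since $\kappa_0^{p-1} = 2(p+1)/(p-1)^2$), so after an integration by parts the form reduces to
\[
\tilde\varphi_{0,\epsilon}(r,r) = (1-\epsilon)\!\int_{-1}^{1}\!\bigl[|r_1'|^2(1-y^2)+|r_2|^2\bigr]\rho\,dy - \epsilon\,\frac{2p(p+1)}{(p-1)^2}\!\int_{-1}^{1}\!|r_1|^2\rho\,dy.
\]
The constraint $r\in\tilde{\mathcal H}_-^0$, namely $\phi(\tilde W_0(0,\cdot),r)=0$, combined with $\tilde W_{0,2}(0,\cdot) = \tilde c_0\kappa_0$ and the defining equation (\ref{tildeW_1}) for $\tilde W_{0,1}(0,\cdot)$, translates (after integration by parts against $-\mathcal{L}r_1 + r_1$) into a single linear condition that kills the projection of $r_1$ onto the kernel of $\mathcal{L}$, i.e.\ onto the constants. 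The core analytic ingredient is then the Poincar\'e-type spectral gap
\[
\int_{-1}^{1}|r_1'|^2(1-y^2)\rho\,dy \;\geq\; \mu_1\int_{-1}^{1}|r_1|^2\rho\,dy
\]
valid on this codimension-one subspace, where $\mu_1>0$ is the second eigenvalue of the self-adjoint Jacobi-type operator $-\mathcal{L}$ on $L^2_\rho(-1,1)$; the lowest eigenvalue is $0$ with eigenfunction the constants, and standard Sturm--Liouville theory produces a strict gap. Choosing $\epsilon_0>0$ so small that $\epsilon_0 \cdot \tfrac{2p(p+1)}{(p-1)^2} \leq (1-\epsilon_0)\mu_1$ then yields $\tilde\varphi_{0,\epsilon_0}(r,r) \geq 0$.

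The main obstacle I expect is the first step: rigorously establishing the covariance of $\tilde\varphi_{\bb,\epsilon}$ under $\mathcal{T}_\bb$, in particular tracking how the inner product $\phi$ and the defining condition of $\tilde{\mathcal H}_-^\bb$ behave under the change of variable, so that the orthogonality condition at level $\bb$ really does transport to the clean condition at level $0$. If that exact invariance fails, a fallback is to carry out the computation directly at general $\bb$, using the orthogonality $\phi(\tilde W_0(\bb,\cdot),r)=0$ to control the $\bb$-dependent potential $\tilde\psi(\bb,y)$, combined with a continuity-in-$\bb$ argument (using the bounds (\ref{normalization})) and a separate analysis near the degenerate limits $\bb\to\pm 1$.
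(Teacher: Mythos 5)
Your proposal captures Part 1 of the paper's argument (the Lorentz covariance plus the spectral gap of $\mathcal{L}$), but it has a genuine gap at the point where you assert that the orthogonality condition $\phi(\tilde W_0(\bb,\cdot),r)=0$ "translates \ldots into a single linear condition that kills the projection of $r_1$ onto the constants." That is not true, even at $\bb=0$. From (\ref{phi}) and (\ref{tildeW_1}) one finds
$$\phi(\tilde W_0(0,\cdot),r)=\tilde c_0\kappa_0\int_{-1}^1\Bigl[\Bigl(-\tfrac{p+3}{p-1}+\tfrac{8}{(p-1)(1-y^2)}\Bigr)\bar r_1+\bar r_2\Bigr]\rho\,dy,$$
which is a constraint coupling $r_1$ (with a non-constant weight) \emph{and} $r_2$, and is not equivalent to $\int_{-1}^1 r_1\rho\,dy=0$. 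So the spectral gap for $-\mathcal{L}$ on the mean-zero subspace does not apply directly on $\tilde{\mathcal H}_-^0$ (let alone on $\tilde{\mathcal H}_-^\bb$). This is not a technicality about transporting the constraint across the Lorentz transform; it is a structural mismatch between the two codimension-one subspaces.

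The paper's proof handles precisely this mismatch via a three-part argument: (Part 1) it shows $\tilde\varphi_{\bb,\epsilon}\ge 0$ on the hyperplane $E_1=\{q:\int \mathcal{T}_{-\bb}(q_1)\rho=0\}$ using the covariance and the gap (this is the step you have); (Part 2) it constructs, by solving the auxiliary elliptic problem (\ref{147})--(\ref{149}), a vector $\tilde V_0(\bb,\epsilon,\cdot)$ which is $\tilde\varphi_{\bb,\epsilon}$-orthogonal to $\tilde{\mathcal H}_-^\bb$ and on which $\tilde\varphi_{\bb,\epsilon}$ is strictly negative for small $\epsilon$; (Part 3) a dimension count shows that any would-be negative direction in $\tilde{\mathcal H}_-^\bb$ together with $\tilde V_0$ would span a two-plane meeting the codimension-one hyperplane $E_1$ nontrivially, contradicting Part 1. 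Your write-up is missing Parts 2 and 3 entirely, and the fallback you suggest (continuity in $\bb$ and behavior near $\bb\to\pm 1$) does not address the real difficulty, which persists uniformly in $\bb$ including $\bb=0$.
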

\begin{proof}[ Lemma \ref{4.8} implies Proposition \ref{4.7}]
As we have $|\tilde \psi (\bb ,y)| \le \frac{C}{1-y^2}$, we proceed exactly like in \cite{MR2362418} page 91.
\end{proof}
\begin{proof}[\it Proof of Lemma \ref{4.8}:] We proceed in 3 parts:\\
- In Part 1, we find an hyperplane of $\mathcal{H} $ where $\tilde \varphi_{\bb , \epsilon}$ is nonnegative.\\
- In Part 2, we find a straight line in $\mathcal{H} $, where $\tilde \varphi_{\bb , \epsilon}$ is negative and which is ``orthogonal'' to $\tilde{\mathcal{H}}^\bb _{-}$ with respect to $\tilde \varphi_{\bb , \epsilon}$.\\
- In Part 3, we proceed by contradiction and prove that $\tilde \varphi_{\bb , \epsilon}$ is nonnegative on $\tilde{\mathcal{H}}^\bb _{-}$.
\medskip\\
{\bf Part 1 : $\tilde \varphi_{\bb , \epsilon}$ is nonnegative on a hyperplane}\\
We claim the following:
\begin{lem}\label{4.9}
There exists $\epsilon_1 > 0$ such that for all $|\bb|<1$ and $\epsilon \in (0,\epsilon_1 ], $ $\tilde \varphi_{\bb ,\epsilon}$ is nonnegative on the hyperplane 
\begin{equation}\label{141}
 E_1=\left\{ q \in \mathcal{H} \big| \int_{-1}^{1} \mathcal{T}_{-\bb} (q_1)\rho (y) dy =0\right\},
\end{equation}
where $\mathcal{T}_{-\bb} $ is defined in (\ref{transformation}).
\end{lem}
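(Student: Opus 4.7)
The plan is to reduce the problem to the case $\bb=0$ using the Lorentz-type transformation $\mathcal{T}_{-\bb}$, and then to exploit the spectral gap of $-\mathcal{L}$ acting on the $L^2_\rho$-orthogonal complement of the constants.

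Since the $q_2$-contribution to $\tilde\varphi_{\bb,\epsilon}(q,q)$ is exactly $(1-\epsilon)\int q_2^2\,\rho\,dy\ge 0$, it suffices to prove the nonnegativity of the $q_1$-part
$$I_{\bb,\epsilon}(q_1) := \int_{-1}^1\Bigl[(1-\epsilon)(q_1')^2(1-y^2)+V_{\bb,\epsilon}(y)\,q_1^2\Bigr]\rho\,dy$$
on the hyperplane $E_1$, where, using $\kappa_0^{p-1}=\frac{2(p+1)}{(p-1)^2}$ and $\tilde\psi(\bb,y)=\kappa_0^{p-1}\bigl[\frac{1-\bb^2}{(1+\bb y)^2}-1\bigr]$, a direct computation gives
$$V_{\bb,\epsilon}(y)=\kappa_0^{p-1}\Bigl[(1-\epsilon)-(1+\epsilon(p-1))\,\frac{1-\bb^2}{(1+\bb y)^2}\Bigr].$$

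I would then perform the change of unknown $\tilde q_1=\mathcal{T}_{-\bb}(q_1)$, so that the membership condition for $E_1$ becomes the orthogonality to constants $\int_{-1}^1\tilde q_1\,\rho\,dy=0$. The key step is the covariance identity
$$I_{\bb,\epsilon}(q_1)=I_{0,\epsilon}(\tilde q_1),$$
which I would obtain via the substitution $Y=(y+\bb)/(1+\bb y)$ combined with the conformal weight $(1-\bb^2)^{1/(p-1)}(1+\bb y)^{-2/(p-1)}$ built into $\mathcal{T}_\bb$: the identity $1-Y^2=(1-\bb^2)(1-y^2)/(1+\bb y)^2$ converts the kinetic term correctly, and the precise linear combination of $\kappa(\bb,\cdot)^{p-1}$ and $\kappa_0^{p-1}$ appearing in $V_{\bb,\epsilon}$ is designed exactly so that the potential piece transforms into its $\bb=0$ counterpart. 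This is the structural analogue of the computation carried out for $\check\varphi_\bb$ in Proposition~4.7 of \cite{MR2362418}, and should transfer with only notational changes since $\tilde\psi$ differs from $\check\psi$ only by the coefficient of $\kappa(\bb,y)^{p-1}$.

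At $\bb=0$ the potential becomes the constant $V_{0,\epsilon}\equiv-\epsilon p\kappa_0^{p-1}$, giving
$$I_{0,\epsilon}(\tilde q_1)=(1-\epsilon)\!\int_{-1}^1\!(\tilde q_1')^2(1-y^2)\rho\,dy-\epsilon p\kappa_0^{p-1}\!\int_{-1}^1\!\tilde q_1^2\,\rho\,dy.$$
The self-adjoint operator $-\mathcal{L}$ on $L^2_\rho(-1,1)$ has $0$ as its smallest eigenvalue with the constants forming the eigenspace, and admits a strictly positive spectral gap $\lambda_1$. Because $\tilde q_1$ is $L^2_\rho$-orthogonal to the constants, the corresponding Poincar\'e inequality $\int \tilde q_1^2\rho\,dy\le\lambda_1^{-1}\int(\tilde q_1')^2(1-y^2)\rho\,dy$ yields
$$I_{0,\epsilon}(\tilde q_1)\ge\Bigl[(1-\epsilon)-\tfrac{\epsilon p\kappa_0^{p-1}}{\lambda_1}\Bigr]\int_{-1}^1(\tilde q_1')^2(1-y^2)\rho\,dy,$$
which is nonnegative for every $\epsilon\in(0,\epsilon_1]$ with $\epsilon_1$ chosen so that $(1-\epsilon_1)\lambda_1\ge\epsilon_1 p\kappa_0^{p-1}$. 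The main obstacle in this plan is the covariance identity of the second paragraph: the algebra is delicate, but the somewhat unmotivated coefficient $\epsilon\cdot\frac{2p(p+1)}{(p-1)^2}\frac{1-\bb^2}{(1+\bb y)^2}$ in the definition of $\tilde\varphi_{\bb,\epsilon}$ is precisely what makes it hold, and the remaining Poincar\'e step plus the choice of $\epsilon_1$ are then straightforward.
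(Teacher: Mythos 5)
Your proof is correct and follows essentially the same route as the paper's. Both arguments hinge on the same two ingredients: (a) the conformal covariance of the quadratic form under $\mathcal{T}_{-\bb}$, which reduces the problem to $\bb=0$ and turns the $E_1$ constraint into $L^2_\rho$-orthogonality to constants (the paper writes this directly as $\mathcal{L}u_1+\tilde\psi(\bb,y)u_1=\frac{(1-\bb^2)^{p/(p-1)}}{(1+\bb y)^{2p/(p-1)}}\mathcal{L}U_1(z)$ plus the Jacobian identity for $\rho\,dy$), and (b) the spectral gap of $\mathcal{L}$ on the orthogonal complement of the constants, which the paper invokes as Lemma \ref{too} ($\int u\mathcal{L}u\,\rho\le\gamma_1\int u^2\rho$ with $\gamma_1=-2\frac{p+1}{p-1}$) and you phrase as the equivalent Poincar\'e inequality with constant $\lambda_1=|\gamma_1|$; your threshold $(1-\epsilon_1)\lambda_1\ge\epsilon_1p\kappa_0^{p-1}$ is exactly the paper's choice $\epsilon_1=\min\bigl(1,\frac{\gamma_1}{\gamma_1-\frac{2p(p+1)}{(p-1)^2}}\bigr)$.
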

\begin{proof}
Define from (\ref{26}) $\epsilon_1 = \min (1, \frac{\gamma_1}{\gamma_1-\frac{2 p (p+1)}{(p-1)^2}})>0$ and fix $\epsilon \in (0, \epsilon_1]$. We consider $u=(u_1, u_2)\in E_1,$ and write from (\ref{137})
\begin{eqnarray}\label{142}
\tilde \varphi_{\bb , \epsilon} (u,u)&= &\int_{-1}^{1} u_1 \left( -(1-\epsilon) \mathcal{L}u_1 +\left[ -(1-\epsilon) \tilde \psi (\bb , y)- \epsilon \frac{2 p (p+1)}{(p-1)^2} \frac{(1-\bb^2)}{(1+\bb y)^2}\right] u_1  \right) \rho (y) dy \notag\\
&+&(1-\epsilon) \int_{-1}^{1} u_2^2 \rho (y) dy.
\end{eqnarray}
If $U_1=\mathcal{T}_{-\bb}u_1,$ then $u_1=\mathcal{T}_{\bb }U_1$ and we have from (\ref{transformation})
\begin{eqnarray*}
u_1(y)=\frac{(1-\bb^2)^\frac{1}{p-1}}{(1+\bb y)^\frac{2}{p-1}} U_1 (z)\,\mbox{ with } z&=&\frac{y+\bb }{1+\bb y}, \notag\\
\mathcal{L}u_1(y) + \tilde \psi (\bb , y)u_1(y)&=& \frac{(1-\bb^2)^{\frac{p}{p-1}}}{(1+\bb y)^{\frac{2p}{p-1}}} \mathcal{L} U_1 (z), \notag\\
\rho (y) dy&=&\frac{(1+\bb y)^{\frac{2(p+1)}{p-1}}}{(1-\bb^2)^\frac{p+1}{p-1}} \rho (z) dz, \notag\\
0&=&\int U_1 (z)\rho (z) dz.
\end{eqnarray*}
Therefore, we see from (\ref{142}) and Lemma \ref{too} that
\begin{eqnarray*}
\tilde \varphi_{\bb , \epsilon} (u,u)&= &\int_{-1}^{1}  U_1 (z) \left( -(1-\epsilon ) \mathcal{L}U_1 (z)-\epsilon \frac{2 p (p+1)}{(p-1)^2}U_1 (z) \right) \rho (z) dz\notag\\
&+& (1-\epsilon )\int_{-1}^{1} u_2^2  \rho (y) dy.\\
&\ge& \left( -(1-\epsilon ) \gamma_1 -\epsilon \frac{2 p (p+1)}{(p-1)^2} \right) \int_{-1}^{1}  U_1^2 (z) \rho (z) dz+(1-\epsilon )\int_{-1}^{1} u_2^2  \rho (y) dy\ge 0
\end{eqnarray*}
since $\epsilon \le \epsilon_1$ hence $( -(1-\epsilon ) \gamma_1 -\epsilon \frac{2 p (p+1)}{(p-1)^2})\ge 0$ and $1-\epsilon \ge 0$. This concludes the proof of Lemma \ref{4.9}.
\end{proof}
{\bf Part 2 : $\tilde \varphi_{\bb , \epsilon}$ is negative on a straight line orthogonal to  $\tilde{\mathcal{H}}^\bb _{-}$.}\\
We need to find $\tilde V_0(\bb ,\epsilon,\cdot)$ in $\mathcal{H}$ such that $\tilde \varphi_{\bb , \epsilon}(\tilde V_0(\bb ,\epsilon,\cdot),r)=0$ for all $r \in \tilde{\mathcal{H}}^\bb _{-}.$ Since we know from the definition of $\tilde{\mathcal{H}}^\bb _{-}$ (\ref{ss}) that
$$\forall r \in \tilde{\mathcal{H}}^\bb _{-},\,\phi (\tilde W_0, r)=\tilde \pi_{0}^\bb  (r)=0,$$
we proceed as in page 93 in \cite{MR2362418} and search $\tilde V_0(\bb ,\epsilon,\cdot)$ such that
\begin{eqnarray}\label{144}
 \forall r \in \mathcal{H},\,\phi (\tilde W_0(\bb,\cdot), r)=\tilde \varphi_{\bb , \epsilon}(\tilde V_0(\bb ,\epsilon,\cdot),r).
\end{eqnarray}
Then, we will show that $\tilde \varphi_{\bb , \epsilon}$ is negative on the straight line spanned by $\tilde V_0(\bb ,\epsilon,\cdot)$. Consider $\epsilon >0$ small enough and take $|\bb|<1$. We claim the following:
\begin{lem}\label{4.10}
 There exists $\epsilon_2 >0$ such that for all $\epsilon \in(0, \epsilon_2)$ and $|\bb|<1$:\\
(i) There exists $\tilde V_{0}(\bb ,\epsilon,\cdot)\in \mathcal{H}_0$ such that (\ref{144}) holds.\\
(ii) Moreover there exists $ c>0$ such that
\begin{equation}\label{145}
 \sup_{|\bb|<1}||\epsilon \tilde V_{0}(\bb ,\epsilon,\cdot)+c \tilde F_{0}(\bb,\cdot) ||_{\mathcal{H} }\rightarrow 0\mbox{ as }\epsilon \rightarrow 0^+.
\end{equation}
(iii) The bilinear form $\tilde \varphi_{\bb , \epsilon}$ is negative on a line of $\mathcal{H}$ spanned by $\tilde V_0(\bb ,\epsilon,\cdot)$.
\end{lem}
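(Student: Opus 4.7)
The strategy is to extract from (\ref{144}) the system satisfied by $\tilde V_0=(V_{0,1},V_{0,2})$, then solve it via a Lyapunov--Schmidt reduction that isolates the $\kappa(\bb,\cdot)$ direction responsible for the singularity at $\epsilon=0$. This parallels the treatment of the real case in \cite{MR2362418}.

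\emph{Step 1 (derivation of the system).} Integrating by parts in (\ref{137}) to move all derivatives onto $V_{0,i}$ and comparing with the integrated-by-parts form of $\phi(\tilde W_0,r)$ given in (\ref{phi}), the requirement (\ref{144}) is equivalent to the two equations
\begin{equation*}
V_{0,2}=\frac{1}{1-\epsilon}\tilde W_{0,2},\qquad -(1-\epsilon)\bigl(\mathcal{L}+\tilde\psi(\bb,\cdot)\bigr)V_{0,1}-\epsilon B_\bb V_{0,1}= -\mathcal{L}\tilde W_{0,1}+\tilde W_{0,1}=:h_\bb,
\end{equation*}
where $B_\bb(y)=\tfrac{2p(p+1)}{(p-1)^2}\tfrac{1-\bb^2}{(1+\bb y)^2}$. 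The second component is solved trivially; the work lies in the scalar equation for $V_{0,1}$, whose leading operator degenerates as $\epsilon\to 0^+$ because $A_\bb:=\mathcal{L}+\tilde\psi(\bb,\cdot)$ is self-adjoint on $L^2_\rho$ with one-dimensional kernel $\mathbb{R}\,\kappa(\bb,\cdot)$ (cf.\ Lemma \ref{fonctions_propres}).

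\emph{Step 2 (Lyapunov--Schmidt).} I decompose $V_{0,1}=\alpha\,\kappa(\bb,\cdot)+W$ with $\int W\kappa(\bb,\cdot)\rho\,dy=0$. Pairing the $V_{0,1}$-equation with $\kappa(\bb,\cdot)$ in $L^2_\rho$, the $A_\bb$-term vanishes by self-adjointness and $A_\bb\kappa=0$, and the right-hand side equals $\phi(\tilde W_0,\tilde F_0)=1$ by (\ref{1}); this gives
\[
-\epsilon\alpha\,\Lambda(\bb)-\epsilon\int_{-1}^{1}\!B_\bb W\,\kappa\,\rho\,dy=1,\qquad \Lambda(\bb):=\int_{-1}^{1}\!B_\bb(y)\,\kappa^2(\bb,y)\,\rho(y)\,dy>0.
\]
An explicit change of variables $z=(y+\bb)/(1+\bb y)$ exploiting the transformation properties of $\kappa(\bb,\cdot)$ and $\rho$ (as already used in Section 2) shows that $\Lambda(\bb)\equiv\Lambda$ is independent of $\bb$; I set $c:=1/\Lambda>0$. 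Projecting the same equation on $\kappa(\bb,\cdot)^\perp$, and denoting by $G_\bb$ the bounded right inverse of $A_\bb$ restricted to that complement (whose uniform boundedness in $\bb$ follows by conjugation through $\mathcal{T}_\bb$, exactly as in \cite{MR2362418}), yields the fixed-point form
\[
\Bigl(I+\tfrac{\epsilon}{1-\epsilon}G_\bb Q_\bb B_\bb\Bigr)W=-\tfrac{1}{1-\epsilon}G_\bb Q_\bb h_\bb-\tfrac{\epsilon\alpha}{1-\epsilon}G_\bb Q_\bb B_\bb\kappa,
\]
where $Q_\bb$ is the $L^2_\rho$-projection onto $\kappa(\bb,\cdot)^\perp$. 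Since $\epsilon\alpha=-c+O(\epsilon)$ is bounded, a Neumann-series inversion for $\epsilon$ small produces a unique $W\in\mathcal{H}_0$ with $\|W\|_{\mathcal{H}_0}$ bounded uniformly in $|\bb|<1$ and $\epsilon\in(0,\epsilon_2)$. Feeding $W$ back into the scalar equation for $\alpha$ yields $\epsilon\alpha+c=-\epsilon(B_\bb W,\kappa)_\rho/\Lambda=O(\epsilon)$ uniformly in $\bb$. This proves (i).

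\emph{Step 3 (asymptotic (ii) and negativity (iii)).} Combining the bounds of Step 2,
\[
\epsilon\tilde V_0(\bb,\epsilon,\cdot)+c\tilde F_0(\bb,\cdot)=\Bigl((\epsilon\alpha+c)\kappa(\bb,\cdot)+\epsilon W,\ \tfrac{\epsilon}{1-\epsilon}\tilde W_{0,2}\Bigr)
\]
has $\mathcal{H}$-norm $O(\epsilon)$ uniformly in $\bb$, which is (\ref{145}). For (iii), plugging $r=\tilde V_0$ into (\ref{144}) gives
\[
\tilde\varphi_{\bb,\epsilon}(\tilde V_0,\tilde V_0)=\phi(\tilde W_0,\tilde V_0)=\alpha\,\phi(\tilde W_0,\tilde F_0)+\phi\bigl(\tilde W_0,(W,V_{0,2})\bigr)=\alpha+O(1),
\]
the $O(1)$ being uniform in $\bb$ thanks to the $\mathcal{H}$-bounds on $W,V_{0,2}$ and (\ref{normalization}). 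Since $\alpha\sim-c/\epsilon\to-\infty$ as $\epsilon\to 0^+$, this quantity is strictly negative after possibly shrinking $\epsilon_2$, proving (iii). The main obstacle throughout is the \emph{uniform}-in-$\bb$ invertibility of $A_\bb$ on $\kappa(\bb,\cdot)^\perp$, with continuous dependence $L^2_\rho\to\mathcal{H}_0$; it is the analog for $\tilde L_\bb$ of the coercivity used in (\ref{36,5}) and is handled by the same $\mathcal{T}_\bb$-conjugation that reduces everything to $\bb=0$.
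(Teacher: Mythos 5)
Your plan correctly identifies the source of the $1/\epsilon$ singularity (the zero mode of $A_\bb=\mathcal{L}+\tilde\psi(\bb,\cdot)$), and Steps~1 and~3 match what the paper does in spirit. But the route is genuinely different in Step~2: you run a Lyapunov--Schmidt reduction in the $y$-variable, decomposing $V_{0,1}=\alpha\kappa(\bb,\cdot)+W$ with $W\perp\kappa(\bb,\cdot)$ in $L^2_\rho$, whereas the paper applies $\mathcal{T}_{-\bb}$ first (Claim~\ref{4.11}) and then diagonalizes the transformed equation explicitly in the eigenbasis $\{h_n\}$ of $\mathcal{L}$ (Claim~\ref{4.12}), never invoking a Fredholm inverse. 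The paper's route makes the singular $n=0$ mode and its coefficient $\tilde f_0=-c_0/\kappa_0$ fully explicit and gives the uniform bound (\ref{158}) for free; yours is shorter conceptually but has to pay for the uniformity by hand.

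That is exactly where there is a genuine gap. You assert that the right inverse $G_\bb$ of $A_\bb$ on $\kappa(\bb,\cdot)^\perp$ is bounded $L^2_\rho\to\mathcal{H}_0$ uniformly in $|\bb|<1$ ``by the same $\mathcal{T}_\bb$-conjugation that reduces everything to $\bb=0$.'' This is not a one-line reduction, because $\mathcal{T}_\bb$ is \emph{not} an $L^2_\rho$-isometry: the change of variables carries a Jacobian weight. Concretely, the $z$-variable constraint $\int U_1\rho(z)\,dz=0$ that makes Lemma~\ref{too} applicable transforms back to $\int q_1\,\kappa(\bb,\cdot)^p\,\rho\,dy=0$ (this is precisely the hyperplane $E_1$ of (\ref{141})), \emph{not} to your constraint $\int q_1\,\kappa(\bb,\cdot)\,\rho\,dy=0$. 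Since $p>1$ these are different hyperplanes, so the spectral gap of $\mathcal{L}$ on $\{\int U_1\rho\,dz=0\}$ does not directly translate into uniform coercivity of $A_\bb$ on your $\kappa^\perp$. One can likely repair this (e.g.\ replace your orthogonality condition by $\int W\,\kappa^p\rho=0$, which leaves the pairing-with-$\kappa$ step unchanged because self-adjointness and $A_\bb\kappa=0$ are all that matter there, and then the $\mathcal{T}_\bb$-reduction does line up with $E_1$), but as written the uniform invertibility claim is unjustified and is the load-bearing step of the whole argument. A minor second point: $\alpha$ and $W$ are coupled, so the Neumann-series step should be preceded by substituting the scalar relation $\epsilon\alpha\Lambda=-1-\epsilon(B_\bb W,\kappa)_\rho$ into the $W$-equation to make it a genuine linear fixed-point problem in $W$ alone; your phrase ``since $\epsilon\alpha=-c+O(\epsilon)$ is bounded'' assumes the conclusion.

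Your computation that $\Lambda(\bb)$ is $\bb$-independent is correct (the change of variables $z=(y+\bb)/(1+\bb y)$ kills the $\bb$-dependence exactly), the identification $\int h_\bb\,\kappa\rho=\phi(\tilde W_0,\tilde F_0)=1$ is correct, and Step~3 is correct modulo the uniform bounds from Step~2.
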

\begin{proof}[ Proof of Lemma \ref{4.10}:] We proceed in 3 steps:\\
-In Step 1, we find a PDE satisfied by $\tilde V_{0,1}(\bb ,\epsilon,\cdot)$ and transform it with the Lorentz transform in similarity variables defined in (\ref{transformation}).\\
-In Step 2, we solve the transformed PDE and find the asymptotic behavior of $\tilde V_{0,1}(\bb ,\epsilon,\cdot)$ as $\epsilon \rightarrow 0^+$, uniformly in $|\bb|<1$, which gives (i) and (ii).\\
-In Step 3, we use that asymptotic behavior to show that $\tilde\varphi_{\bb ,\epsilon}$ is negative on a straight line spanned by $\tilde V_{0}(\bb ,\epsilon,\cdot)$, which gives (iii).

\bigskip
{\bf Step 1: Reduction to the solution of some PDE.}\\
From the definition of $\tilde \varphi_{\bb ,\epsilon}$ (\ref{137}) and $\phi$ (\ref{phi}), we see that in order to satisfy (\ref{144}), it is enough to take
\begin{equation}\label{146}
\tilde V_{0,2}(\bb ,\epsilon,\cdot)= \tilde W_{0,2}(\bb,\cdot)/ (1-\epsilon)
\end{equation}
and to prove the existence of $\tilde V_{0,1}(\bb ,\epsilon,\cdot)$ solution to
\begin{eqnarray}\label{147}
 -(1-\epsilon)\mathcal{L}\tilde V_{0,1}(\bb ,\epsilon,\cdot)&+&\left(  -(1-\epsilon)\tilde \psi (\bb , \cdot)- \epsilon \frac{2 p (p+1)}{(p-1)^2} \frac{(1-\bb^2)}{(1+\bb y)^2}\right)\tilde V_{0,1}(\bb ,\epsilon,\cdot)\notag\\&=&-\mathcal{L} \tilde W_{0,1}(\bb , \cdot)+\tilde W_{0,1}(\bb , \cdot).
\end{eqnarray}
\begin{cl}\label{4.11}{\bf (Reduction to an explicitly solvable PDE)} Consider $\tilde V_{0,1}(\bb ,\epsilon,\cdot)$ and introduce $\tilde v_{0,1}(\bb ,\epsilon,\cdot)$ defined by
\begin{equation}\label{148}
 \tilde v_{0,1}(\bb ,\epsilon,\cdot)=\mathcal{T}_{-\bb} \tilde V_{0,1}(\bb ,\epsilon,\cdot),
\end{equation}
where $\mathcal{T}_{\bb }$ is defined in (\ref{transformation}). Then,\\
(i) $\tilde V_{0,1}(\bb ,\epsilon,\cdot)$ is a solution to (\ref{147}) if and only if $\tilde v_{0,1}(\bb ,\epsilon,\cdot)$ is a solution to the equation
\begin{equation}\label{149}
 (1-\epsilon)\mathcal{L}\tilde v_{0,1}(\bb ,\epsilon,z)+\epsilon \frac{2p(p+1)}{(p-1)^2}\tilde v_{0,1}(\bb ,\epsilon,z)=f_0^\bb (z)\equiv\frac{1-\bb^2}{(1-\bb z)^2}\mathcal{T}_{-\bb} \left( \mathcal{L}  \tilde W_{0,1}(\bb,y)-\tilde W_{0,1}(\bb,y)\right).
\end{equation}
(ii) The linear form $h \mapsto \int_{-1}^{1} f_0^\bb  h \rho$ defined for all $h\in \mathcal{H}_0$ is continuous and for some $C_0 >0$, we have 
$$\forall \bb \in (-1,1),\, ||f_0^\bb ||_{\mathcal{H}_{0}'}\le C_0|| \tilde W_0(\bb,\cdot)||_{\mathcal{H}}\le C_0^2.$$
\end{cl}
\begin{proof}
 \item{(i)} Using (\ref{transformation}) we see that
\begin{eqnarray*}
\tilde V_{0,1}(\bb ,\epsilon,y)=\frac{(1-\bb^2)^\frac{1}{p-1}}{(1+\bb y)^\frac{2}{p-1}} \tilde v_{0,1}(\bb ,\epsilon,z)\,\mbox{ with } z&=&\frac{y+\bb }{1+\bb y}, \notag\\
\mathcal{L}\tilde V_{0,1}(\bb ,\epsilon,y) + \tilde \psi (\bb , y)\tilde V_{0,1}(\bb ,\epsilon,y)&=& \frac{(1-\bb^2)^{\frac{p}{p-1}}}{(1+\bb y)^{\frac{2p}{p-1}}} \mathcal{L} \tilde v_{0,1}(\bb ,\epsilon,z).
\end{eqnarray*}
Since $\frac{(1-\bb z)^2}{1-\bb^2}=\frac{1-\bb^2}{(1+\bb y)^2}$, we see that equation $(\ref{147})$ and $(\ref{149})$ are equivalent.
\item{(ii)}  The proof of (ii) is the same as the proof of Claim 4.11 in page 94 in \cite{MR2362418}. 
\end{proof}
\bigskip
{\bf Step 2: Solution of equation (\ref{149}) and asymptotic behavior as $\epsilon \rightarrow 0^+$.\\ }\\
We prove (i) and (ii) of Lemma \ref{4.10} here. Let us first recall the following result from \cite{MR2362418}.
\begin{cl}\label{4.12}{\bf (Solution of equation (\ref{149}))} Consider
$$f(y)=\sum \limits_{n=0}^\infty \tilde f_n h_n (y) \in \mathcal{H}_{0}'$$
where $h_n $ are the eigenfunctions of $\mathcal{L}$ defined in Proposition \ref{L}. Then, for any $\epsilon \in (0, \frac{1}{2})$, the following equation
\begin{equation*}
 (1-\epsilon)\mathcal{L} v+\epsilon \frac{2p(p+1)}{(p-1)^2}v=f
\end{equation*}
has a unique solution in $\ \mathcal{H}_0$ given by 
\begin{equation*}
 v=\sum \limits_{n=0}^\infty \frac{\tilde f_n}{\gamma_n+\left(\frac{2(p+1)}{(p-1)^2} -\gamma_n\right)\epsilon}  h_n
\end{equation*}
where $\gamma_n \le 0$ are the eigenvalues of $\mathcal{L}$ introduced in Proposition \ref{L}.
\end{cl}
\noindent Now, we use this Claim to prove $(i)$ and $(ii)$.\\
{\it Proof of (i) of Lemma \ref{4.10}:} Using $(ii)$ in Claim \ref{4.11}, we see that $f_0^\bb \in \mathcal{H}_{0}'$. Therefore, Claim \ref{4.12} applies, and we have a unique solution $\tilde v_{0,2}(\bb ,\epsilon,\cdot)\in \mathcal{H}_0$ to equation (\ref{149}). Using $(i)$ of Claim \ref{3} and Lemma \ref{ff} below we get a solution $\tilde V_{0,1}(\bb ,\epsilon,\cdot)\in \mathcal{H}_0$ to equation (\ref{147}). \\
{\it Proof of (ii) of Lemma \ref{4.10}}: Note that the spectral properties of $\mathcal{L}$ are given in Proposition \ref{L} below. Since $h_0=c_0$ by Proposition \ref{L}, we see from Claim \ref{4.12} and (ii) in Claim \ref{4.11} that for $\epsilon$ small enough,
\begin{equation}\label{158}
 ||\tilde v_{0,1}(\bb ,\epsilon,\cdot)-\frac{\tilde f_0}{\frac{2(p+1)}{(p-1)^2}\epsilon}c_0||_{\mathcal{H}_{0}}\le C ||f_0^\bb  ||_{\mathcal{H}_{0}'}\le C,
\end{equation}
where from $(ii)$ in Lemma \ref{ff} and the fact that $c_0=\mathcal{T}_{-\bb} (c_0\frac{\kappa(\bb ,y)}{\kappa_0})$ (see \ref{transformation}), we have
\begin{align*}
\tilde f_0^\bb  =c_0 \int_{-1}^{1} f(z) \rho (z)dz&= \frac{c_0 }{\kappa_0} \int_{-1}^{1} \left( \mathcal{L}  \tilde W_{0,1}(\bb  ,y)-\tilde W_{0,1}(\bb  ,y) \right)\kappa(\bb ,y) \rho (y)dy\\&=-\frac{c_0}{\kappa_0}\phi(\tilde W_0(\bb ,\cdot),\tilde F_0(\bb,\cdot))=-\frac{c_0}{\kappa_0}.
\end{align*}
(use also the expression (\ref{phi}) of $\phi$ together with (\ref{F}) and (\ref{1}).
As $\tilde V_{0,2}(\bb ,\epsilon,\cdot)$ is explicitly given by (\ref{146}) and (\ref{W}), we see that (\ref{145}) follows from (\ref{158}), (\ref{148}), the fact that $\mathcal{T}_\bb  (\kappa_0) =\kappa (\bb , y)$ and the expression of $\tilde F_{0}$ (\ref{F}).

\bigskip
{\bf Step 3: Sign of $\tilde \varphi_{\bb ,\epsilon}$ on the line spanned by $\tilde V_{0}(\bb ,\epsilon,\cdot)$.}\\
{\it Proof of (iii) of Lemma \ref{4.10}:} We will prove now that $ \tilde \varphi_{\bb ,\epsilon} $ is negative on the straight line spanned by $\tilde V_{0}(\bb ,\epsilon,\cdot)$.
From (\ref{144}), (\ref{145}) and (\ref{1}), we see that 
\begin{eqnarray*}
 \tilde \varphi_{\bb ,\epsilon} (\tilde V_{0}(\bb ,\epsilon,\cdot),\tilde V_{0}(\bb ,\epsilon,\cdot))=\phi (\tilde W_{0}(\bb,\cdot),\tilde V_{0}(\bb ,\epsilon,\cdot))\sim -\frac{c}{\epsilon} \phi (\tilde W_{0}(\bb,\cdot),\tilde F_{0}(\bb,\cdot))=-\frac{c}{\epsilon},\mbox{ as }\epsilon \rightarrow 0,
\end{eqnarray*}
uniformly in $|\bb|<1$, So $\tilde \varphi_{\bb ,\epsilon} (\tilde V_{0}(\bb ,\epsilon,\cdot),\tilde V_{0}(\bb ,\epsilon,\cdot)) <0$. This concludes the proof of Lemma \ref{4.10}.

\end{proof}
{\bf Part 3: End of the proof of Lemma \ref{4.8}:}\\
From Lemmas \ref{4.9} and \ref{4.10}, we define $\epsilon_0=\min(\epsilon_1, \epsilon_2)\in (0,1).$ We will now prove by contradiction that $\tilde \varphi_{\bb ,\epsilon_0}$ is positive on $\tilde{\mathcal{H}}^\bb _{-}$ for all $|\bb|<1.$\\
We note that from (\ref{ss}) and (\ref{144}), for all $|\bb|<1$ and $\epsilon \in (0,\epsilon_0]$, the definition of $\tilde{\mathcal{H}}^\bb _{-}$ (\ref{ss}) writes as follows:
\begin{equation}\label{164}
 \tilde{\mathcal{H}}_{-}^\bb  = \{r \in \mathcal{H} \,|\,  \tilde \varphi_{\bb ,\epsilon}(\tilde V_{0}(\bb ,\epsilon,\cdot),r)=0\}.
\end{equation}
Consider $|\bb|<1$. By contradiction, assume that $\tilde \varphi_{\bb ,\epsilon}$ is negative so
\begin{equation}\label{165}
 \mbox{there is a nonzero }r\in \tilde{\mathcal{H}}^\bb _{-} \mbox{ such that }\tilde \varphi_{\bb ,\epsilon}(r,r)<0.
\end{equation}
We mention that $r$ is not collinear $\tilde V_{0}(\bb ,\epsilon,\cdot)$. Indeed, if $r =\alpha \tilde V_{0}(\bb ,\epsilon,\cdot)$ with $\alpha \in \mathbb{R^*}$, then we would have
$$ \tilde \varphi_{\bb ,\epsilon}(\tilde V_{0}(\bb ,\epsilon,\cdot),r)=\alpha  \tilde \varphi_{\bb ,\epsilon}(\tilde V_{0}(\bb ,\epsilon,\cdot),\tilde V_{0}(\bb ,\epsilon,\cdot))\neq0,$$
by $(iii)$ in Lemma \ref{4.10}, which contradicts (\ref{164}). Thus, the vector subspace
$$E_2=\mbox{ span }(\tilde V_{0}(\bb ,\epsilon,\cdot),r)$$
is of dimension 2. Therefore, as the subspace $E_1$ (\ref{141}) is of codimension 1, there exists a non zero $u \in E_1 \cap E_2.$\\
On the one hand, since $u \in E_1$, we have from Lemma \ref{4.9} that
\begin{equation}\label{166}
 \tilde \varphi_{\bb ,\epsilon}(u,u) \ge 0.
\end{equation}
On the other hand, since $ \tilde \varphi_{\bb ,\epsilon}$ is negative on $E_2$ by (iii) of Lemma \ref{4.10}, we must have from (\ref{164}) and (\ref{165}), 
\begin{equation*}
 \tilde \varphi_{\bb ,\epsilon}(u,u) < 0.
\end{equation*}
This contradicts (\ref{166}). So, $\tilde \varphi_{\bb ,\epsilon}$ is nonnegative on $\tilde{\mathcal{H}}^\bb _{-}$. This concludes the proof of Lemma \ref{4.8} and Proposition \ref{4.7}.
\end{proof}
\section{Trapping near the set of stationary solutions}
In this part of the work in the real case in \cite{MR2362418}, the authors have assumed that (\ref{18}) holds for some $s^*\in\mathbb{R}$ and $d^*\in (-1,1)$ and use modulation theory to introduce a parameter $d(s)$ adapted to the linearized equation and derive from the energy barrier the smallness of the unstable direction with respect to the stable, then they use this to show that $(w(s),\partial_s w(s))$ to some $\kappa(\bb_\infty,\cdot)$ as $s \rightarrow \infty$ in the norm of $\mathcal{H}$.

This section is devoted to the proofs of Proposition \ref{2}, Theorem \ref{theo3} and Theorem \ref{theo4}. Let us first give the proof of Proposition \ref{2} then derive Theorem \ref{theo4} from Theorem \ref{theo3}, and afterwards, prove Theorem \ref{theo3}.
\subsection{Convergence to a stationary solution}
We give the proofs of Proposition \ref{2} and Theorem \ref{theo4} here.
\begin{proof}[Proof of Proposition \ref{2}]
From Proposition \ref{2.1} and Proposition \ref{Th}, one can see that the proof given in the real case in Section 3.1 in \cite{MR2362418} holds here with non change. Indeed, all the estimates remain valid in the complex case, in particular, the Sobolev embedding and the Duhamel formulation of the wave equation (\ref{waveq}).
\end{proof}
\begin{proof}[Proof of Theorem \ref{theo4} assuming Theorem \ref{theo3}]
 Consider $w=w_{x_0} $ where $x_0$ is non-characteristic. The conclusion will follow from the application of Theorem \ref{theo3} to $w_{x_0}.$ In order to conclude, we have to check conditions (\ref{17}) and (\ref{18}). From the monotonicity of functional $E$ (See Proposition \ref{2.1} below) and $(ii)$ of Proposition \ref{2}, we see that
\begin{align*}
 \forall s \ge -\log(T(x_0)),\,E(w(s),\partial_s w(s))\ge E(\kappa_0,0)
\end{align*}
and (\ref{17}) follows. Consider $\epsilon^*$ defined in Theorem \ref{theo3}. From (i) of Proposition \ref{2}, we have the existence of $s^*\ge -\log T(x_0)$ such that 
 \begin{eqnarray*}
\inf_{\{|\bb|<1,\, \theta \in \mathbb{R}\}}\Big|\Big|\begin{pmatrix} w(s^*)\\\partial_s w(s^*) \end{pmatrix} -e^{i \theta}\begin{pmatrix} \kappa(\bb ,\cdot)\\0\end{pmatrix} \Big|\Big|_{{ H^1\times L^2}}\le \frac{\epsilon^*}{2}.
 \end{eqnarray*}
Therefore, there exists $|\bb^*|<1$ and $\theta^*\in \mathbb{R}$ such that
 \begin{eqnarray*}
\Big|\Big|\begin{pmatrix} w(s^*)\\\partial_s w(s^*) \end{pmatrix} -e^{i \theta^*}\begin{pmatrix} \kappa(\bb ^*,\cdot)\\0\end{pmatrix} \Big|\Big|_{{ H^1\times L^2}}\le \epsilon^*.
 \end{eqnarray*}
 Since $0\le \rho(y)\le 1$, it follows that
 \begin{eqnarray*}
\inf_{\{|\bb|<1, \theta \in \mathbb{R}\}}\Big|\Big|\begin{pmatrix} w(s^*)\\\partial_s w(s^*) \end{pmatrix} -e^{i \theta}\begin{pmatrix} \kappa(\bb ,\cdot)\\0\end{pmatrix} \Big|\Big|_{\mathcal{H}}\le \epsilon^*
 \end{eqnarray*}
 and (\ref{18}) follows. Applying Theorem \ref{theo3}, we get the conclusion of the Theorem \ref{theo4}.
\end{proof}

\subsection{A Modulation technique}
We introduce two parameters $\bb(s)$ and $\theta(s)$ and we use a modulation technique to claim the following:
\begin{prop}{({\bf Modulation of $w$ with respect to $e^{i\theta}\kappa (\bb , \cdot)$)}}\label{5.1}
There exists $\epsilon_	1 > 0$ and $K_1 >0$ such that if $(w, \partial_s w) \in C([s^*,\infty),\mathcal{H})$ for some $s^* \in \mathbb{R}$ is a solution to equation (\ref{equa}) which satisfies (\ref{18}) for some $|\bb^*|<1, \theta^* \in \mathbb{R}$ and $\epsilon^* <\epsilon_1$, then the following is true:
\item{(i)} {\bf (Choice of the modulation parameter)} There exists $\bb(s)\in C^1([s^*,\infty),(-1,1))$ and $\theta(s)\in C^1([s^*,\infty),\mathbb{R}) $ such that for all $s\in [s^*,\infty)$,
\begin{equation}\label{167}
\check\pi_0^{\bb(s)}(\check q(s))=\tilde\pi_0^{\bb(s)}(\tilde q(s))=0
\end{equation}
where $\check\pi_0^{\bb}$ and $\tilde\pi_0^{\bb}$ are defined in (\ref{barpi}), (\ref{pi}) and $q=(q_1, q_2)$ is defined for all $s\in [s_0,\infty)$ by
\begin{eqnarray}\label{168}
\begin{pmatrix} w(y,s)\\\partial_s w(y,s) \end{pmatrix} =e^{i \theta (s)}\left[\begin{pmatrix} \kappa(\bb (s),y)\\0\end{pmatrix} +\begin{pmatrix} q_1(y,s)\\q_2(y,s)\end{pmatrix}\right].
\end{eqnarray}
Moreover,
\begin{equation*}
 |\theta(s^*)-\theta^*|+\Big| \log\left(\frac{1+\bb (s^*)}{1-\bb(s^*)}\right)-\log\left(\frac{1+\bb^*}{1-\bb^*}\right)\Big|+||q(s^*)||_{\mathcal{H}}\le K_1 \epsilon^*.
\end{equation*}
\item{(ii)} {\bf (Equation on $q$)} For all $s\in [s^*,\infty)$,
\begin{align}\label{170}
\frac{\partial }{\partial s}\begin{pmatrix} \check q_1\\\check q_2
\end{pmatrix}&=\check L_{\bb(s)} \begin{pmatrix} \check q_1\\\check q_2 \end{pmatrix}+\begin{pmatrix}0\\ \check{f}_{\bb(s)}(q_1)\end{pmatrix}-\bb'(s)\begin{pmatrix}\partial_\bb  \kappa(\bb ,y)\\ 0\end{pmatrix}+\theta'(s)\begin{pmatrix} \tilde q_1 \\\tilde q_2 \end{pmatrix},
\\\label{170'}
\frac{\partial }{\partial s} \begin{pmatrix} \tilde q_1\\\tilde q_2
\end{pmatrix}&=\tilde L_{\bb(s)}\begin{pmatrix} \tilde q_1\\\tilde q_2 \end{pmatrix}+\begin{pmatrix} 0\\\tilde{f}_{\bb(s)}(q_1) \end{pmatrix}-\theta'(s)\begin{pmatrix} \kappa(\bb ,y)+\check q_1\\\check q_2
\end{pmatrix}, 
\end{align}
where $\check L_{\bb(s)} ,\tilde L_{\bb(s)}, \check{f}_{\bb(s)}$ and $\tilde{f}_{\bb(s)}$ are defined in (\ref{barL_bb }), (\ref{barL_bb }) and (\ref{tildeL_bb }).
\end{prop}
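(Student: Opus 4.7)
The plan is to construct $\bb(s)$ and $\theta(s)$ by applying the implicit function theorem to the map that records the two orthogonality defects in (\ref{167}), and then to derive the $q$-equations by substituting the ansatz (\ref{168}) into (\ref{equa}).

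For part (i), I introduce
\[
F\bigl((v_1,v_2),\bb,\theta\bigr)=\begin{pmatrix}\check\pi_0^\bb(\check q)\\\tilde\pi_0^\bb(\tilde q)\end{pmatrix},\qquad \begin{pmatrix}q_1\\q_2\end{pmatrix}=e^{-i\theta}\begin{pmatrix}v_1\\v_2\end{pmatrix}-\begin{pmatrix}\kappa(\bb,\cdot)\\0\end{pmatrix}.
\]
At the base point $(v_1,v_2)=e^{i\theta^*}(\kappa(\bb^*,\cdot),0)$, $(\bb,\theta)=(\bb^*,\theta^*)$, one has $q=0$ and a direct calculation gives $\partial_\bb q\big|_0=-(\partial_\bb\kappa(\bb^*,\cdot),0)$ and $\partial_\theta q\big|_0=(-i\kappa(\bb^*,\cdot),0)$. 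By the explicit formulas (\ref{defk}) and (\ref{110}), the first vector is real and proportional to $\check F_0(\bb^*,\cdot)$ with coefficient $-\tfrac{2\kappa_0}{(p-1)(1-(\bb^*)^2)}$ (as noted in the remark following Lemma \ref{fonctions_propres}), while the second is purely imaginary and equals $-\tilde F_0(\bb^*,\cdot)$ by (\ref{F}). Combined with the dual identities $\phi(\check W_0,\check F_0)=\phi(\tilde W_0,\tilde F_0)=1$, this shows that $\partial_{(\bb,\theta)}F$ at the base point is block-diagonal with nonzero entries, hence invertible. The IFT then produces a unique $C^1$ map $(v_1,v_2)\mapsto(\bb,\theta)$ on a neighborhood of the base point in $\mathcal{H}$ satisfying (\ref{167}); the associated Lipschitz estimate, rewritten in the Lorentz-invariant coordinate $\atang\bb$ to absorb the $(1-\bb^2)^{-1}$ factor and make constants uniform in $|\bb^*|<1$, yields the bound $|\theta(s^*)-\theta^*|+|\atang\bb(s^*)-\atang\bb^*|+\|q(s^*)\|_{\mathcal{H}}\le K_1\epsilon^*$. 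Applied pointwise along the continuous curve $s\mapsto(w(s),\partial_s w(s))\in\mathcal{H}$, this defines $\bb(s),\theta(s)$ on a neighborhood of $s^*$; the $C^1$ regularity in $s$ follows from implicit differentiation of (\ref{167}), whose resulting $2\times 2$ linear system in $(\bb'(s),\theta'(s))$ is invertible for the same reason as above. Extension to all of $[s^*,\infty)$ is obtained by continuation as long as $(w,\partial_s w)$ stays in the IFT neighborhood, an a posteriori condition closed by the trapping argument of Theorem \ref{theo3} in which this proposition is used.

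For part (ii), I differentiate (\ref{168}) in $s$. Matching the first component against $\partial_s w=e^{i\theta(s)}q_2$ and dividing by $e^{i\theta(s)}$ yields
\[
\partial_s q_1=q_2-i\theta'(s)\bigl[\kappa(\bb(s),\cdot)+q_1\bigr]-\bb'(s)\,\partial_\bb\kappa(\bb(s),\cdot),
\]
whose real and imaginary parts give exactly the first scalar components of (\ref{170}) and (\ref{170'}). For the second components, I differentiate once more, substitute equation (\ref{equa}) for $w$, and invoke the stationary identity $\mathcal{L}\kappa-\tfrac{2(p+1)}{(p-1)^2}\kappa+\kappa^p=0$ from (\ref{47}) to obtain
\[
\partial_s q_2=\mathcal{L}q_1-\tfrac{2(p+1)}{(p-1)^2}q_1+|\kappa+q_1|^{p-1}(\kappa+q_1)-\kappa^p-\tfrac{p+3}{p-1}q_2-2y\partial_y q_2-i\theta'(s)q_2.
\]
Splitting into real and imaginary parts and using the linearization identity $|\kappa+q_1|^{p-1}(\kappa+q_1)=\kappa^p+p\kappa^{p-1}\check q_1+i\kappa^{p-1}\tilde q_1+\check f_\bb+i\tilde f_\bb$ read off from (\ref{barL_bb }), one recognizes the operators $\check L_{\bb(s)}$ and $\tilde L_{\bb(s)}$ and recovers the second scalar components of (\ref{170}) and (\ref{170'}).

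The main conceptual point is not the IFT itself but the verification that $\partial_{(\bb,\theta)}F$ decouples cleanly into a real $\bb$-block and an imaginary $\theta$-block. This rests on the geometric fact that the $\bb$-derivative of the stationary family $e^{i\theta}(\kappa(\bb,\cdot),0)$ lies in a purely real direction (along $\check F_0$) while the $\theta$-derivative lies in a purely imaginary direction (along $\tilde F_0$); this is precisely what allows the two orthogonality conditions in (\ref{167}) to be enforced simultaneously by the two real parameters $\bb$ and $\theta$. The secondary, more cosmetic, difficulty is ensuring uniformity of $K_1$ in $|\bb^*|<1$, which is why both the hypothesis and the conclusion are naturally stated in the Lorentz-invariant coordinate $\atang\bb$.
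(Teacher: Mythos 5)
Your proof follows essentially the same route as the paper: both construct the modulation parameters by applying the implicit function theorem to the map recording the two orthogonality defects $\check\pi_0^\bb(\check q)$, $\tilde\pi_0^\bb(\tilde q)$, both verify nondegeneracy by computing the $(\bb,\theta)$-Jacobian at the base point (the paper writes this out explicitly as the $2\times 2$ matrix $J$ with estimates (\ref{s1})--(\ref{s4}), showing the diagonal entries are of size $\tfrac{2\kappa_0}{(p-1)(1-\bb^2)}$ and $-1$ while the off-diagonal entries are $O(\epsilon_1)$), and both derive (\ref{170})--(\ref{170'}) by differentiating the ansatz (\ref{168}) and inserting equation (\ref{equa}) together with the stationary identity for $\kappa(\bb,\cdot)$. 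Your "geometric" observation that the $\bb$-derivative of the stationary family lies along $\check F_0$ (real) while the $\theta$-derivative lies along $i\tilde F_0$ (imaginary), making the Jacobian block-diagonal at $q=0$, is exactly the structural reason behind the paper's estimate (\ref{175'}), just phrased more transparently. One modest divergence: for the extension of the modulation to all of $[s^*,\infty)$, the paper runs a direct continuation/contradiction argument showing $\sigma^*=+\infty$ by re-applying the IFT at nearby base points with uniform constants, whereas you defer the global extension to the bootstrap of Theorem \ref{theo3}. This is a legitimate alternative bookkeeping of the same content (the trapping estimates are indeed what keep $(w,\partial_s w)$ in the tubular neighborhood), though it means that, as stated, your proposition only guarantees the modulation locally, with the global claim closed a posteriori; the paper's version is phrased to make Proposition \ref{5.1} self-contained. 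A small notational point: your $\atang\bb$ is the paper's $\tfrac12\log\tfrac{1+\bb}{1-\bb}$ (cf. (\ref{234})), while the proposition's statement uses $\log\tfrac{1+\bb}{1-\bb}$; the constant absorbs the factor of two, so this is harmless, but you should be consistent with one normalization.
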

\begin{proof}
(i) From (\ref{barpi}) (\ref{s}), we see that the condition (\ref{167}) becomes\\ $\Phi((w(s),\partial_s w(s)), \bb(s), \theta (s))=0 $ 
where $ \Phi  \in C(\mathcal{H}\times (-1,1)\times\mathbb{R},\mathbb{R}\times \mathbb{R}) $ is defined by
\begin{equation}
\begin{array}{c}\label{172}
\Phi (v, \bb, \theta)=\begin{pmatrix}\check \Phi (v, \bb, \theta)\\\tilde \Phi (v, \bb, \theta)\end{pmatrix}=\begin{pmatrix}\phi(\mathcal{R}e (e^{-i\theta}v-(\kappa(\bb ,\cdot),0)),\check W_0)\\
\phi(\mathcal{I}m (e^{-i\theta}v),\tilde W_0)\end{pmatrix}
\end{array}
\end{equation}
We recall the following inequality which has been proved in page 102 in \cite{MR2362418}:
\begin{equation}\label{j}
 \forall \bb_1, \bb_2 \in (-1,1), ||\kappa(\bb _1,\cdot)-\kappa(\bb _2,\cdot)||_{\mathcal{H}_0}\le C_0 |\lambda_1-\lambda_2|\mbox{ where }\lambda_i=\log\left(\frac{1+\bb_i}{1-\bb_i}\right).
\end{equation}
We would like to apply the implicit function theorem to $\Phi$ near the point \\$(e^{i\theta^*}(\kappa(\bb ^*,\cdot),0),\bb^*,\theta^*)$. Three facts have to be checked :\\
1-First, note that
\begin{equation*}
\Phi (e^{i\theta^*}(\kappa(\bb ^*,\cdot),0),\bb^*,\theta^*)=0
\end{equation*}
2-Then, we compute from (\ref{172}), for all $u\in \mathcal{H}$,
\begin{equation*}
 D_v \check \Phi (v,\bb,\theta)(u)=\phi(\mathcal{R}e (e^{-i\theta}u),\check W_0),
\end{equation*}
\begin{equation*}
 D_v \tilde \Phi (v,\bb,\theta)(u)=\phi(\mathcal{I}m (e^{-i\theta}u),\tilde W_0) ,
\end{equation*}
so we have
\begin{equation}\label{175}
 ||D_v \check \Phi (v,\bb,\theta)||\le C_0 \mbox{ and } ||D_v \tilde \Phi (v,\bb,\theta)||\le C_0.
\end{equation}
3-Let $J(\check \Phi,\tilde \Phi )$ the jacobian matrix of $\Phi$, and $D$ its determinant so
\begin{equation*}
J= \left( \begin{array}{ll}
     \partial_\bb  \check \Phi&  \partial_\theta \check \Phi\\
     \partial_\bb    \tilde \Phi & \partial_\theta \tilde \Phi\end{array}  \right)
\end{equation*}
where
\begin{eqnarray*}
  \partial_\bb  \check \Phi&=&\phi((\partial_\bb  \kappa(\bb ,\cdot),0),\check W_0)+\phi(\mathcal{R}e (e^{-i\theta}v-(\kappa(\bb ,\cdot),0)), \partial_\bb  \check W_0)\\&=&\frac{2\kappa_0}{(p-1)(1-\bb^2)}+\phi(\mathcal{R}e (e^{-i\theta}v-(\kappa(\bb ,\cdot),0)), \partial_\bb  \check W_0)\\
 \partial_\theta \check \Phi&=&\phi(\mathcal{I}m (e^{-i\theta }v),\check W_0)\\
\partial_\bb  \tilde \Phi&=&\phi(\mathcal{I}m (e^{-i\theta }v),\partial_\bb  \tilde W_0)\\
\partial_\theta \tilde \Phi&=&\phi(-\kappa(\bb ,\cdot),\tilde W_0)+\phi(-\mathcal{R}e (e^{-i\theta}v)+\kappa(\bb ,\cdot),\tilde W_0)\\
&=&-1+\phi(-\mathcal{R}e (e^{-i\theta}v)+\kappa(\bb ,\cdot),\tilde W_0),
\end{eqnarray*}
referring to Lemma 4.4 in \cite{MR2362418} for the first equation and the orthogonality relation (\ref{1}) for the last one. Using The Cauchy-Schwarz inequality, the continuity of $\phi$ in $\mathcal{H}$, the bound (\ref{normalization}), Lemma 4.4 in  \cite{MR2362418},
and (\ref{j}), we see that if
\begin{equation}\label{174}
 |\theta-\theta^*|+\Big| \log\left(\frac{1+\bb }{1-d}\right)-\log\left(\frac{1+\bb^*}{1-\bb^*}\right)\Big|+||v-e^{i\theta^*}(\kappa(\bb ^*,\cdot),0)||_{\mathcal{H}}\le \epsilon_1
\end{equation}
for some $\epsilon_1>0$ small enough independent of $\bb^*$, then we have
\begin{eqnarray}\label{s1}
\big| \partial_\bb  \check \Phi&-&\frac{2\kappa_0}{(p-1)(1-\bb^2)} \big| \le \frac{C}{1-\bb^2} \bigg(
 \left||\kappa(\bb ^*,\cdot)-\kappa(\bb ,\cdot)\right||_{\mathcal{H}_0}\\
&+&||\mathcal{R}e (e^{-i\theta^*}v-(\kappa(\bb ^*,\cdot),0))||_\mathcal{H}+||\mathcal{R}e (v(e^{-i\theta}-e^{-i\theta^*}))||_\mathcal{H} \bigg)\le \frac{C\epsilon_1}{1-\bb^2}\notag,
\end{eqnarray}
\begin{eqnarray}\label{s2}
 |\partial_\bb  \tilde \Phi|\le \frac{C}{1-\bb^2}||\mathcal{I}m (e^{-i\theta }v)||_\mathcal{H}\le \frac{C\epsilon_1}{1-\bb^2},
\end{eqnarray}
\begin{eqnarray}\label{s3}
 |\partial_\theta \check \Phi|\le C ||\mathcal{I}m (e^{-i\theta }v)||_\mathcal{H}\le C\epsilon_1,
\end{eqnarray}
\begin{eqnarray}\label{s4}
 |\partial_\theta \tilde \Phi+1|&=&|\phi(-\mathcal{R}e (e^{-i\theta}v)+\kappa(\bb ,\cdot),\tilde W_0)|
\le C \bigg(||-\mathcal{R}e (e^{-i\theta^*}v)\\&+&\kappa(\bb ^*,\cdot)||_\mathcal{H}+||\mathcal{R}e (v(e^{-i\theta}-e^{-i\theta^*})) ||_\mathcal{H}+||\kappa(\bb ,\cdot)-\kappa(\bb ^*,\cdot)||_{\mathcal{H}_0}\bigg)\le C \epsilon_1.\notag
\end{eqnarray}
Collecting (\ref{s1})-(\ref{s4}), we see that for $\epsilon_1$ small enough, we have
\begin{equation}\label{175'}
 |D + \frac{2 \kappa_0}{(p-1)(1-\bb^2)}| \le \frac{C \epsilon_1}{1-\bb^2},
\end{equation}
 so we have the nondegeneracy of $\tilde \Phi$ near the point $(e^{i\theta^*}(\kappa(\bb ^*,\cdot),0),\bb^*,\theta^*)$. Applying the implicit function theorem, we see from (\ref{175}) and (\ref{175'}) that there exists $\epsilon_2,\, \epsilon_3 >0$, $C^1$ applications $ (f,g): \mathcal{H}\rightarrow (-1,1)\times \mathbb{R}$ such that for all $v \in \mathcal{H}$ satisfying $||e^{i\theta^*}(\kappa(\bb ^*,\cdot),0)-v||_\mathcal{H}\le \epsilon_2$ and for all $ (\bb ,\theta)\in (-1,1)\times \mathbb{R} $ satisfying $|\bb -\bb^*|+|\theta-\theta^*|\le \epsilon_3$ 
we have
\begin{equation}\label{fct implicite}
\Phi (v, \bb, \theta)=0 \Leftrightarrow (\bb ,\theta)=(f(v),g(v)).
\end{equation}
Take $\epsilon_0=\frac{\epsilon_2}{2}$ and consider $\epsilon^*\le \epsilon_0$. From (\ref{18}) and the continuity of $(w,\partial_s w),$
we see that for some $\sigma^* >s^*$, we have:
$$\forall s \in [s^*,\sigma^*], ||(w(s),\partial_s w (s))-e^{i \theta^*}(\kappa(\bb ^*,\cdot),0)||_\mathcal{H}\le 2 \epsilon^* \le \epsilon_2.$$ 
Therefore, from (\ref{172}) and (\ref{fct implicite}), we see that requiring (\ref{167}) is equivalent to have $d=f(w(s))$ and $\theta=g(w(s)).$ Since $f$ and $g$ are $C^1$, we get the conclusion with $C^1$ functions $\bb(s)$ and $\theta (s)$ such that (\ref{167}) holds for all $s \in [s^*,\sigma^*]$.\\
Now,  let's prove that $\sigma^*=+\infty$. By contradiction, suppose that $\sigma^*<+\infty$, we apply the implicit function theorem to $\Phi$ at the point $(v_n, \bb_n, \theta_n) \equiv ((w(s_n),\partial_s w (s_n)), \bb(s_n), \theta (s_n))$ where $s_n = \sigma^*-\frac{1}{n}$, and the uniform continuity  of $(w(s),\partial_s w(s))$ from $[\sigma^*-\eta_0,\sigma^*+\eta_0]$ to $\mathcal{H}$ for some $\eta_0>0$. In fact, from (\ref{fct implicite}), $\Phi ((w(s_n),\partial_s w (s_n)), \bb(s_n), \theta (s_n))=0 $, moreover (\ref{175}) and (\ref{175'}) are uniformly satisfied, so as above we see that we can define $\bb(s)$ for all $s \in[s_n,s_n+\epsilon_0]$ for some $\epsilon_0 >0$ independent of $n$. Therefore, for $n$ large enough, $\bb(s)$ exists beyond $\sigma^*$, which is a contradiction. Thus, $\sigma^*=+\infty.$

(ii) This is a direct consequence of the equation (\ref{equa}) satisfied by $w$ put in vectorial form:
\begin{eqnarray*}
 \partial_s w&=&v\\
\partial_s v&=&\mathcal{L}w-\frac{2(p+1)}{(p-1)^2}w+|w|^{p-1}w-\frac{p+3}{p-1} v- 2 y \partial_y v
\end{eqnarray*}
and the fact that $(\kappa(\bb ,\cdot),0)$ satisfies
$$\mathcal{L}\kappa(\bb ,\cdot)-\frac{2(p+1)}{(p-1)^2}\kappa(\bb ,\cdot)+|\kappa(\bb ,\cdot)|^{p-1}\kappa(\bb ,\cdot)=0$$
as a stationary solution. We have from (\ref{168})
\begin{eqnarray*}
 \partial_s q_1&= &q_2-i \theta' (\kappa(\bb (s),y)+q_1)-\bb'\partial_\bb  \kappa\\
 \partial_s q_2&=& \mathcal{L}\kappa(\bb (s),y)+\mathcal{L} q_1-\frac{2(p+1)}{(p-1)^2}(\kappa(\bb (s),y)+q_1)\\&+&|\kappa(\bb (s),y)+q_1|^{p-1}(\kappa(\bb (s),y)+q_1)-\frac{p+3}{p-1} q_2- 2 y \partial_y q_2-i\theta' q_2.
\end{eqnarray*}
Dissociating the real and the imaginary part of these equations, we get (\ref{170}) and (\ref{170'}).
\end{proof}
\subsection{Projection on the eigenspaces of the operator $L_\bb $}
 Given $s \ge s^*$ and following the previous section, we make in this section the following a priori estimate
\begin{equation}\label{179}
||q (s)||_\mathcal{H} \le \epsilon 
\end{equation}
for some $\epsilon >0$. From (\ref{167}), we will expand $\check q$ and $\tilde q$ respectively according to the spectrum of the linear operators $\check L_\bb $ and $\tilde L_\bb $ as in (\ref{ma}) and (\ref{s}):
\begin{align}\label{180}
\check q(y,s)&=\check \alpha_1 \check F_1(\bb  ,y)+ \check q_{-}(y,s)\\
\label{180*}
\tilde q(y,s)&=\tilde q_{-}(y,s)
\end{align}
where
\begin{equation}\label{181}
\check \alpha_1= \check\pi_1^{\bb(s)}(\check q),\;\check \alpha_0= \check\pi_0^{\bb(s)}(\check q)=0,\;\check \alpha_{-}(s)=\sqrt{\check\varphi_\bb  (\check q_{-},\check q_{-})}
\end{equation}
\begin{equation}\label{181'}
\tilde \alpha_0= \tilde\pi_0^{\bb(s)}(\tilde q)=0,\;\tilde\alpha_{-}(s)=\sqrt{\tilde\varphi_\bb  (\tilde q_{-},\tilde q_{-})}
\end{equation}
and
\begin{equation*}
\check q_{-}=\begin{pmatrix} \check q_{-,1}\\\check q_{-,2} \end{pmatrix}=\check\pi_{-}^{\bb}(\check q)=\check\pi_{-}^{\bb}\begin{pmatrix} \check{q_1}\\\check{q_2} \end{pmatrix}
\end{equation*}
\begin{equation*}
\tilde q_{-}=\begin{pmatrix} \tilde q_{-,1}\\\tilde q_{-,2} \end{pmatrix}=\tilde\pi_{-}^{\bb}(\tilde q)=\tilde\pi_{-}^{\bb}\begin{pmatrix} \tilde{q_1}\\\tilde{q_2} \end{pmatrix}
\end{equation*}
From (\ref{180}), (\ref{180*}), (\ref{36,5}) Proposition \ref{4.7}, we see that for all $s \ge s_0$,
\begin{eqnarray}\label{183}
\notag
 \frac{1}{C_0} \check \alpha_{-}(s) &\le& ||\check q_{-}(s) ||_{\mathcal{H} } \le C_0 \check \alpha_{-}(s)\\ 
 \frac{1}{C_0}(|\check \alpha_1(s)|+ \check \alpha_{-}(s)) &\le& ||\check q(s) ||_{\mathcal{H} } \le C_0 (|\check \alpha_1(s)|+\check \alpha_{-}(s))\\
 \frac{1}{C_0} \tilde \alpha_{-}(s) &\le& ||\tilde q(s) ||_{\mathcal{H} } \le C_0 \tilde \alpha_{-}(s) \notag
\end{eqnarray}
for some $C_0 > 0$. Let us introduce
\begin{equation}\label{209}
R_{-}(s)=-\int_{-1}^{1} \mathcal{ F}_{\bb }(q_1) \rho dy,
\end{equation}
where 
\begin{equation}\label{*F*}
\mathcal{F}_{\bb(s)} (q_1 (y,s))=\frac{|\kappa(\bb ,\cdot)+q_1|^{p+1}}{p+1}-\frac{\kappa(\bb ,\cdot)^{p+1}}{p+1}-\kappa(\bb ,\cdot)^p \check q_1-\frac{p}{2} \kappa(\bb ,\cdot)^{p-1} \check q_1^2-\frac{\kappa(\bb ,\cdot)^{p-1}}{2}\tilde q_1^2.
\end{equation}
 In the following proposition, we derive from (\ref{170}) and (\ref{170'}) differential inequalities satisfied by $\check \alpha_1(s)$, $\check \alpha_{-}(s)$, $\tilde \alpha_{-}(s)$, $\theta(s)$ and $\bb(s)$.
\begin{prop}\label{5.2}
There exists $C_0$ and $\epsilon_2 >0$ such that if $w$ a solution to equation (\ref{equa}) satisfying (\ref{167}) and (\ref{179}) at some time s for some $\epsilon \le \epsilon_2$, where $q$ is defined in (\ref{168}), then:
\item{(i)} {\bf (Control of the modulation parameter)}
\begin{equation}\label{184}
|\theta'|+\frac{|\bb '|}{1-\bb^2}\le C_0 (\check \alpha_1^2 +\check \alpha_{-}^2+\tilde \alpha_{-}^2).
\end{equation}
\item{(ii)} {\bf (Projection of equation (\ref{170}) on the different eigenspaces of $\check L_\bb $ and $\tilde L_\bb $)}
\begin{align}\label{185}
|\check \alpha_1' -\check \alpha_{1}|&\le C_0 (\check \alpha_1^2 +\check \alpha_{-}^2+\tilde \alpha_{-}^2),
\\\label{186}
\left( R_{-}+\frac{1}{2}(\check \alpha_{-}^2+\tilde \alpha_{-}^2)\right)'&\le -\frac{4}{p-1}\int_{-1}^{1}(\check q_{-,2}^2+\tilde q_{-,2}^2)\frac{\rho}{1-y^2}dy+ C_0 (\check \alpha_1^2 +\check \alpha_{-}^2+\tilde \alpha_{-}^2)^{\frac{3}{2}},
\end{align}
for $R_{-}(s)$, as defined in (\ref{209}), satisfying
\begin{equation}\label{187}
|R_{-}(s)|\le C_0 (\check \alpha_1^2 +\check \alpha_{-}^2+\tilde \alpha_{-}^2)^{\frac{1+\check p}{2}} \mbox{ where } \check p=\min(p,2)>1.
\end{equation}
\item{(iii)} {\bf (An additional relation)}
\begin{equation}\label{188}
\frac{d}{ds}\int_{-1}^{1} \check q_1 \check q_2 \rho \le -\frac{4}{5}\check \alpha_{-}^2+ C_0\int_{-1}^{1}\check q_{-,2}^2\frac{\rho}{1-y^2}+ C_0 (\check \alpha_1^2 +\tilde \alpha_{-}^2)
\end{equation}
\begin{equation}\label{188'}
\frac{d}{ds}\int_{-1}^{1} \tilde q_1 \tilde q_2 \rho \le -\frac{4}{5}\tilde \alpha_{-}^2+ C_0\int_{-1}^{1} \tilde q_2^2\frac{\rho}{1-y^2}+ C_0 (\check \alpha_1^2 +\check \alpha_{-}^2).
\end{equation}
\item{(iv)} {\bf (Energy barrier)} If moreover (\ref{17}) holds, then
\begin{equation}
 \check \alpha_1 (s)\le C_0 \check \alpha_-(s)+ C_1 \tilde \alpha_-(s).\label{189}
\end{equation}
\end{prop}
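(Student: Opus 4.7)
My plan is to prove the four parts of Proposition \ref{5.2} by standard modulation-theory computations adapted to the complex setting, with the additional phase parameter $\theta$ handled in parallel with the spatial parameter $\bb$.

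\textbf{Part (i)} I would differentiate the orthogonality conditions (\ref{167}) in $s$. From the definitions (\ref{barpi}) and (\ref{pi}), the derivatives yield
\begin{equation*}
\bb' \phi(\partial_\bb \check W_0(\bb,\cdot),\check q) + \phi(\check W_0(\bb,\cdot),\partial_s \check q)=0
\end{equation*}
and an analogous identity with $\tilde W_0$ and $\tilde q$. Substituting (\ref{170}) and (\ref{170'}) for $\partial_s \check q$ and $\partial_s \tilde q$, I would use the conjugate-operator identities $\phi(\check W_0,\check L_\bb r)=0$ and $\phi(\tilde W_0,\tilde L_\bb r)=0$ to kill the linear parts. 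The remaining equations form a $2\times 2$ linear system in $(\bb',\theta')$ whose diagonal coefficients are the nondegenerate quantities $\phi(\partial_\bb \check W_0,\check F_0(\bb,\cdot))$ and $-\phi(\tilde W_0,(\kappa(\bb,\cdot),0))=-1$, evaluated up to $O(\|q\|_\mathcal{H})$ corrections (cf.\ the proof of Proposition \ref{5.1}). Inverting this near-diagonal system, one finds that $\bb'/(1-\bb^2)$ and $\theta'$ are controlled by $\phi(\check W_0,(0,\check f_\bb))$, $\phi(\tilde W_0,(0,\tilde f_\bb))$ and the coupling terms $\theta'\phi(\check W_0,(\tilde q_1,\tilde q_2))$, etc. All these are quadratic in $(\check\alpha_1,\check\alpha_-,\tilde\alpha_-)$ by Taylor expansion of $\check f_\bb,\tilde f_\bb$ and the norm equivalence (\ref{183}), giving (\ref{184}).

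\textbf{Part (ii)} For (\ref{185}), apply $\check\pi_1^{\bb(s)}$ to (\ref{170}). Using $\check L_\bb \check F_1=\check F_1$ and the orthogonality $\phi(\check W_1,\check F_0)=0$, one obtains $\check\alpha_1'=\check\alpha_1+\phi(\check W_1,(0,\check f_\bb))-\bb'\phi(\check W_1,(\partial_\bb\kappa,0))+\theta'\phi(\check W_1,(\tilde q_1,\tilde q_2))$, and the last three terms are quadratic by the smallness of $\check f_\bb$ and by (\ref{184}). For (\ref{186}), project (\ref{170}) and (\ref{170'}) onto the negative subspaces, then differentiate $\check\varphi_\bb(\check q_-,\check q_-)+\tilde\varphi_\bb(\tilde q_-,\tilde q_-)$ in $s$; integration by parts against the damping term $-\tfrac{p+3}{p-1}q_2-2y\partial_y q_2$ yields the dissipative contribution $-\tfrac{4}{p-1}\int(\check q_{-,2}^2+\tilde q_{-,2}^2)\rho/(1-y^2)\,dy$, while the nonlinear contribution is absorbed by adding $R_-$, since $\partial_s\int\mathcal{F}_\bb(q_1)\rho$ cancels the leading quadratic nonlinear piece up to cubic remainders (exactly as in Lemma 5.3 of \cite{MR2362418}, with the imaginary part treated analogously using $\tilde\psi$ in place of $\check\psi$). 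The bound (\ref{187}) follows by Taylor expansion: $\mathcal{F}_\bb(q_1)$ is the third-order remainder of $|\kappa+q_1|^{p+1}/(p+1)$, so $|\mathcal{F}_\bb|\lesssim \kappa^{p-2}|q_1|^3+|q_1|^{p+1}$, integrated against $\rho$ and bounded using $\|q_1\|_{L^\infty}\lesssim \|q\|_\mathcal{H}$ (Sobolev), giving the power $\check p+1=\min(p+1,3)$.

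\textbf{Part (iii)} is a direct energy-type computation: expand $\frac{d}{ds}\int\check q_1\check q_2\rho$ using (\ref{170}), perform integration by parts to rewrite $\int\check q_1\mathcal{L}\check q_1\rho$, and recognize $-\check\varphi_\bb(\check q,\check q)+\int\check q_2^2\rho$. Using the decomposition $\check q=\check\alpha_1\check F_1+\check q_-$ together with the spectral structure of $\check\varphi_\bb$ (the constant $-4/5$ comes from adjusting coefficients so that the unstable contribution $\check\alpha_1^2$ is dominated, exactly as in Section 5 of \cite{MR2362418}); the modulation remainders are controlled by (\ref{184}). The same argument with $\tilde\varphi_\bb$ in place of $\check\varphi_\bb$ gives (\ref{188'}).

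\textbf{Part (iv)} is the heart of the proposition and its main obstacle in the complex case. I would expand $E(w,\partial_s w)$ around $e^{i\theta(s)}\kappa(\bb(s),\cdot)$ using (\ref{168}). By (\ref{14}), $E(e^{i\theta}\kappa(\bb,\cdot),0)=E(\kappa_0,0)$ is independent of $(\theta,\bb)$, and since $\kappa(\bb,\cdot)$ is a stationary solution the linear term vanishes; one checks that
\begin{equation*}
E(w,\partial_s w)-E(\kappa_0,0)=\tfrac{1}{2}\check\varphi_\bb(\check q,\check q)+\tfrac{1}{2}\tilde\varphi_\bb(\tilde q,\tilde q)-R_-,
\end{equation*}
using the definitions (\ref{134'}), (\ref{134}), (\ref{209}) and (\ref{*F*}). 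With the decomposition (\ref{180})--(\ref{180*}) and the spectral orthogonality $\check\varphi_\bb(\check F_1,\check q_-)=0$ coming from $\check L_\bb\check F_1=\check F_1$ (identical to the real case), the first quadratic form becomes $-\lambda_1\check\alpha_1^2+\check\varphi_\bb(\check q_-,\check q_-)$ with some $\lambda_1>0$, while the second is exactly $\tilde\varphi_\bb(\tilde q_-,\tilde q_-)$ since $\tilde q=\tilde q_-$. Applying the energy barrier (\ref{17}) and the norm equivalences of Proposition \ref{4.7} together with (\ref{187}) yields
\begin{equation*}
\lambda_1\check\alpha_1^2\le C_0\check\alpha_-^2+C_1\tilde\alpha_-^2+C(\check\alpha_1^2+\check\alpha_-^2+\tilde\alpha_-^2)^{(1+\check p)/2}.
\end{equation*}
For $\epsilon_2$ small enough the cubic remainder is absorbed into the left-hand side, and taking square roots gives (\ref{189}). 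The genuine novelty compared with the real case is that the complex phase invariance produces the extra zero-mode fixed by the modulation $\theta(s)$, so the negative subspace now carries two energy-equivalent norms $\check\alpha_-$ and $\tilde\alpha_-$, both of which must appear in the right-hand side; checking that the cross term $\check\varphi_\bb(\check F_1,\tilde q_-)$-type coupling does not spoil the diagonalization is the delicate point, handled by the fact that the real and imaginary blocks $\check L_\bb$ and $\tilde L_\bb$ decouple in (\ref{linearis�-complexe}).
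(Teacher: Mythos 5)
Your proposal follows essentially the same route as the paper's proof: project the modulated equations (\ref{170})--(\ref{170'}) onto the eigenspaces of $\check L_\bb$ and $\tilde L_\bb$ for parts (i)--(ii), differentiate the inner products $\int\check q_1\check q_2\rho$ and $\int\tilde q_1\tilde q_2\rho$ for (iii), and expand $E$ around $e^{i\theta(s)}\kappa(\bb(s),\cdot)$ together with the coercivity of $\check\varphi_\bb,\tilde\varphi_\bb$ on the negative subspaces for (iv). One small but genuine slip in your sketch of (\ref{187}): the bound $\|q_1\|_{L^\infty}\lesssim\|q\|_{\mathcal H}$ is false, since $\mathcal H_0$ is a weighted space whose weight degenerates at $y=\pm 1$ and only a weighted $L^\infty$ estimate holds; the correct tool, as the paper uses, is the Hardy--Sobolev inequality of Lemma \ref{2.2}, $\|q_1\|_{L_\rho^{p+1}}\lesssim\|q_1\|_{\mathcal H_0}$, combined with H\"older against $\kappa(\bb,\cdot)^{p-2}\in L_\rho^{(p+1)/(p-2)}$ (when $p\ge2$), which still yields the exponent $\check p+1$.
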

\noindent {\bf Remark:} Estimate (\ref{186}) shows a kind of Lyapunov functional for system (\ref{170})-(\ref{170'}). Indeed, if we imagine for a second that $\bb$ and $\theta$ do not depend on $s$ (in other words, if we forget the modulation technique), then proving (\ref{186}) reduces to finding a Lyapunov functional for system (\ref{170})-(\ref{170'}), which follows, as for equation (\ref{equa}), by multiplication by the conjugate of the time derivative, then, by integration over $(-1,1)$.\\
Because of the modulation, we need to be more careful and use $(i)$ to show that $|\bb '|$ and $|\theta'|$ are quadratic in $||q||_\mathcal{H}$.\\
\noindent{\bf Remark}: The estimates concerning $\theta' (s)$ and $ { R}_-$ are among the novelties of our paper, since they directly involve the complex structure. The other estimates are parallel to those of the real case treated in \cite{MR2362418}.\\
\noindent{\bf Remark}: The bahavior of the solution will be derived in Section \ref{4.4} below, thanks to the differential inequalities stated in Proposition \ref{5.2} above. One issue will be to show that the unstable direction $\check \alpha_1 $, which satisfies (\ref{185}) never dominates the other components. This fact is true from (\ref{189}), which is a direct consequence of the energy barrier hypothesis $ E(w(s),\partial_s w(s)) \ge E(\kappa_0,0)$ given in (\ref{17}). Let us stress the fact that such a hypothesis is natural, since Theorem \ref{theo3} will be applied with $w=w_{x_0}$ where $x_0$ is non-characteristic, and thanks to Proposition \ref{2}, we know that (\ref{189}) holds.

 Let us give the following estimate:
\begin{cl}\label{5.3}
For all $y\in (-1,1)$, $q_1=\check q_1+ i \tilde q_1$
\begin{equation}\label{190}
|f_{\bb(s)} (q_1 (y,s))|\le C_0 mM \left( k(\bb (s),y)^{p-2}|q_1 (y,s)|^2,|q_1(y,s)|^p\right),
\end{equation}
\begin{equation}\label{191}
|\mathcal{F}_{\bb(s)} (q_1 (y,s))|\le C_0 mM \left( k(\bb (s),y)^{p-2}|q_1 (y,s)|^3,|q_1(y,s)|^{p+1}\right),
\end{equation}
where $f_{\bb } (q_1 )$ and $\mathcal{F}_{\bb} (q_1 )$ are introduced in (\ref{barL_bb }) and (\ref{*F*}), $mM=\min$ if $1<p<2$ and $mM=\max$ if $p\ge 2$.
\end{cl}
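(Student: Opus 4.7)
The plan is to view $f_\bb(q_1)$ and $\mathcal F_\bb(q_1)$ as Taylor remainders of the (real-analytic away from $0$) $\mathbb{R}^2$-valued functions
\[
G(u,v)=|u+iv|^{p-1}(u+iv),\qquad H(u,v)=\frac{|u+iv|^{p+1}}{p+1},
\]
expanded around the point $(u,v)=(\kappa(\bb,y),0)$. A direct computation shows that $\check f_\bb+i\tilde f_\bb$ is precisely $G(\kappa+\check q_1,\tilde q_1)$ minus its first-order Taylor polynomial at $(\kappa,0)$, while $\mathcal F_\bb$ is $H(\kappa+\check q_1,\tilde q_1)$ minus its second-order Taylor polynomial at $(\kappa,0)$. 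Thus the bounds to prove are pointwise Taylor remainder estimates, and the argument then splits into two regimes separated by the size of $|q_1|$ compared with $\kappa=\kappa(\bb,y)$.

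In the ``small perturbation'' regime $|q_1(y,s)|\le \kappa/2$ the straight segment from $(\kappa,0)$ to $(\kappa+\check q_1,\tilde q_1)$ stays in the region $\{|z|\ge\kappa/2\}$ where $G$ and $H$ are smooth, and one has the pointwise bounds
\[
|D^2 G(u,v)|\le C|u+iv|^{p-2},\qquad |D^3 H(u,v)|\le C|u+iv|^{p-2}
\]
valid for all $p>1$, since no derivative ever reaches the singular point $0$. Writing the Taylor remainders in integral form and using $|u+iv|\sim\kappa$ along the segment yields
\[
|f_\bb(q_1)|\le C\,\kappa^{p-2}|q_1|^2,\qquad |\mathcal F_\bb(q_1)|\le C\,\kappa^{p-2}|q_1|^3.
\]
In the ``large perturbation'' regime $|q_1(y,s)|>\kappa/2$, we simply estimate each term in the definitions of $f_\bb$ and $\mathcal F_\bb$ separately: $|\kappa+q_1|^{p-1}(\kappa+\check q_1)$ is bounded by $C|q_1|^p$, and the remaining terms like $\kappa^p$, $\kappa^{p-1}|q_1|$, $\kappa^{p-1}|q_1|^2$ are all bounded by suitable powers using $\kappa\le 2|q_1|$. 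This gives
\[
|f_\bb(q_1)|\le C|q_1|^p,\qquad |\mathcal F_\bb(q_1)|\le C|q_1|^{p+1}.
\]

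It remains to merge the two regimes using the algebraic fact that $\kappa^{p-2}|q_1|^2\le |q_1|^p$ is equivalent to $|q_1|\ge\kappa$ when $p<2$, and to $|q_1|\le\kappa$ when $p>2$. In the subquadratic case $1<p<2$, the small-regime estimate gives the minimum ($\kappa^{p-2}|q_1|^2\le|q_1|^p$ there) and the large-regime estimate gives the other minimum, so we conclude with $mM=\min$. In the superquadratic case $p\ge 2$ the roles flip: the small-regime bound $\kappa^{p-2}|q_1|^2$ is the larger of the two expressions when $|q_1|\le\kappa$, and the large-regime bound $|q_1|^p$ is the larger when $|q_1|\ge\kappa$, so we conclude with $mM=\max$. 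The same case analysis, applied with $|q_1|^3$ replaced by $|q_1|^{p+1}$ in the $p<2$ regime (just use $|q_1|^{3}\le C\kappa^{3-p}|q_1|^p$ when $|q_1|\le\kappa$), gives the stated bound on $\mathcal F_\bb$.

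The one point that needs some care is the subquadratic case $1<p<2$ for the cubic remainder of $H$: there $D^3 H$ is unbounded near $0$, but since we only use the expansion in the region where $|(u,v)|\ge\kappa/2$, the bound $|D^3 H|\le C\kappa^{p-2}$ remains valid. This observation — that the singular point of the nonlinearity is always a fixed distance $\kappa/2>0$ away from the segment used in Taylor's formula — is really the only subtle part of the argument; everything else is bookkeeping on powers of $\kappa$ and $|q_1|$.
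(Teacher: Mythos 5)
Your proof is correct and takes essentially the same approach as the paper's one-sentence sketch: normalize by $\kappa$, split into two regimes according to the relative size of $|q_1|$ and $\kappa$, Taylor-expand (with the observation that the segment stays a fixed fraction of $\kappa$ away from the singularity) in the first regime, and bound term-by-term in the second. One write-up slip to fix: the "algebraic fact" sentence has the inequality directions reversed --- for $1<p<2$, $\kappa^{p-2}|q_1|^2\le|q_1|^p$ is equivalent to $|q_1|\le\kappa$ (and to $|q_1|\ge\kappa$ when $p>2$); your parenthetical conclusions are consistent with the correct version, so the argument survives, but the sentence should be corrected, and you may wish to note explicitly that in the overlap band $\kappa/2<|q_1|<\kappa$ the quantities $\kappa^{p-2}|q_1|^2$ and $|q_1|^p$ differ by at most a factor $2^{|p-2|}$, so the large-regime bound does give the minimum up to a harmless constant.
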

\begin{proof}[Proof of Claim \ref{5.3}] Introducing $\xi=\check \xi+ i \tilde \xi=q_1/\kappa(\bb (s),y)$ and considering the cases $|\xi|\le1$ and $|\xi|\ge1$, for $f_\bb  (q_1)=\check f_\bb  (q_1)+i\tilde f_\bb (q_1)$ given in (\ref{barL_bb }), we get the conclusion.
\end{proof}
\begin{proof}[Proof of Proposition \ref{5.2}]
We proceed in 4 parts in order to prove Proposition \ref{5.2}:
\medskip\\
-In Part 1, we project equations (\ref{170}) and (\ref{170'}) respectively with the projectors $\check \pi_\lambda$ ($\lambda \in\{0,1\}$) (\ref{barpi}) and $\tilde\pi_0$ (\ref{pi}) and we derive the smallness condition on $\bb'$ and $\theta'$, together with (\ref{185}).\\
-In Part 2, we first give some preliminary estimations, then from the derivatives of $\check \alpha_{-}^2$ and $\tilde \alpha_{-}^2$ given by the quadratic form $\check \phi_\bb $ and $\tilde \phi_\bb $, we get (\ref{186}) and (\ref{187}).\\
-In Part 3, writing equations satisfied by $\check q$ (\ref{170}), $\tilde q$ (\ref{170'}) and using (\ref{167}) we prove (\ref{188}) and (\ref{188'}).\\
-In Part 4, we prove $(iv)$.
\bigskip\\
{\bf Part 1: Projection of equations (\ref{170}) and (\ref{170'})}\\
Projecting equation (\ref{170}) with the projector $\check \pi_\lambda^\bb $ (\ref{barpi}) for $\lambda=0$ and $\lambda=1$, we write
\begin{eqnarray*}
\check \pi_\lambda^\bb  (\partial_s \check{q})
=\check \pi_\lambda^\bb  (\check L_{\bb(s)} \check{q})+\check \pi_\lambda^\bb  \begin{pmatrix}0\\ \check{f}_{\bb(s)}(q_1)\end{pmatrix}-\bb'(s)\check \pi_\lambda^\bb  \begin{pmatrix}\partial_\bb  \kappa(\bb ,y)\\ 0\end{pmatrix}+\theta'(s)\check \pi_\lambda^\bb  \begin{pmatrix} \tilde{q_1}\\\tilde{q_2} \end{pmatrix},
\end{eqnarray*}
Proceeding exactly like in page 105 in \cite{MR2362418} with (\ref{179}) (\ref{183}), and using the fact that
\begin{equation*}
|\check \pi_\lambda^\bb  \begin{pmatrix} \tilde{q_1}\\\tilde{q_2} \end{pmatrix}|=|\phi (\check W_\lambda (\bb,\cdot) , \begin{pmatrix} \tilde{q_1}\\\tilde{q_2}\end{pmatrix})|\le ||\check W_\lambda (\bb,\cdot) ||_{\mathcal{ H}} ||\tilde{q}||_{\mathcal{ H}}\le C \tilde \alpha_{-},
\end{equation*}
we get
\begin{align}\label{A1}
\frac{2 \kappa_0}{(p-1)(1-\bb^2)} |\bb '|&\le \frac{C_0}{1-\bb^2}|\bb '|(|\check\alpha_1|+\check\alpha_{-})+C_0 (\check \alpha_1^2 +\check \alpha_{-}^2+\tilde \alpha_{-}^2)+C_0|\theta'|\tilde \alpha_{-}
\\\label{A2}
|\check\alpha_1'(s)-\check\alpha_1(s)|&\le\frac{C_0}{1-\bb^2}|\bb '|(|\check\alpha_1|+\check\alpha_{-})+C_0 (\check \alpha_1^2 +\check \alpha_{-}^2+\tilde \alpha_{-}^2)+C_0|\theta'|\tilde \alpha_{-}.
\end{align}
Now, projecting equation (\ref{170'}) with the projector $\tilde \pi_0^\bb $ (\ref{pi}), we get
\begin{eqnarray}\label{98,5}
\tilde \pi_0^\bb (\partial_s \tilde q)=\tilde \pi_0^\bb (\tilde L_{\bb(s)} \tilde q)+\tilde \pi_0^\bb  \begin{pmatrix} 0\\\tilde{f}_{\bb(s)}(q_1) \end{pmatrix}-\theta'(s)\tilde \pi_0^\bb \begin{pmatrix} \kappa(\bb ,y)+\check{q_1}\\\check{q_2} \end{pmatrix}.
\end{eqnarray}
-Since $\tilde \alpha_0 (s)=\tilde \pi_0^\bb  (\tilde q)= \phi ( \tilde W_0(\bb,\cdot),\tilde q)=0$ by (\ref{181'}) and the definition of $\tilde \pi_0^\bb $ (\ref{pi}), we write
$$0=\tilde \alpha_0' (s)=\tilde \pi_0^\bb  (\partial_s \tilde q)+ \bb'(s)\tilde \phi ( \partial_\bb  \tilde W_0(\bb,\cdot),\tilde q).$$
Using (\ref{normalization}) and (\ref{183}), we get
\begin{equation}\label{B1}
|\tilde \pi_0^\bb  (\partial_s \tilde q)|\le \frac{C_0}{1-\bb^2} |\bb '| \tilde\alpha_{-}.
\end{equation}
-Using (i) of Lemma \ref{3}, the definition of $\tilde \pi_0^\bb $ (\ref{pi}), we write
\begin{equation}\label{B2}
\tilde \pi_0^\bb  (\tilde L_{\bb }(\tilde q))=\tilde \phi ( \tilde W_0(\bb,\cdot),\tilde L_{\bb }(\tilde q))=\tilde \phi ( \tilde L_{\bb }^*(\tilde W_0(\bb,\cdot)),\tilde q)=0.
\end{equation}
-From the definitions of $\tilde \pi_0^\bb $ (\ref{pi}) and $\phi$ (\ref{phi}), together with Claim \ref{5.3}, we see that
\begin{align}
&\Big{|}\tilde \pi_0^\bb  \begin{pmatrix} 0\\\tilde{f}_{\bb(s)}(q_1) \end{pmatrix}\Big{|} \le C \int_{-1}^{1} \kappa(\bb ,y) |\tilde{f}_{\bb }(q_1)| \rho (y) dy \notag\\
&\le C_0 \int_{-1}^{1} \kappa(\bb ,y)^{p-1} |q_1(y,s)|^2\rho dy+ C_0 \delta_{\{p\ge2\}}  \int_{-1}^{1} \kappa(\bb ,y)|q_1(y,s)|^p\rho(y) dy\notag\\
&\le C_0 ||q_1||^2_{L_\rho^{p+1}} ||\kappa(\bb ,\cdot)||^{p-1}_{L_\rho^{p+1}}+C_0  \delta_{\{p\ge2\}}||q_1||^p_{L_\rho^{p+1}} ||\kappa(\bb ,\cdot)||_{L_\rho^{p+1}}\label{198}
\end{align}
where $\delta_{\{p\ge2\}}$ is $0$ if $1<p<2$ and $1$ otherwise. Therefore, using (\ref{198}), Lemma \ref{2.2}, (\ref{179}) and (\ref{183}), we get
\begin{equation}\label{B3}
\Big{|}\tilde \pi_0^\bb  \begin{pmatrix} 0\\\tilde{f}_{\bb(s)}(q_1) \end{pmatrix}\Big{|} \le C_0 (\check \alpha_1(s)^2 +\check \alpha_{-}(s)^2+\tilde \alpha_{-}(s)^2).
\end{equation}
-Since $\tilde \pi_0^\bb \begin{pmatrix} \kappa(\bb ,y)\\0 \end{pmatrix}=1$ from Lemma \ref{fonctions_propres} and \ref{3}, using (\ref{normalization}) and (\ref{183}), we write 
\begin{equation}\label{B4}
\left|\tilde \pi_0^\bb \begin{pmatrix} \kappa(\bb ,y)+\check q_1\\\check q_2 \end{pmatrix}-1\right|=\left|\tilde \pi_0^\bb \begin{pmatrix} \kappa(\bb ,y)+\check q_1\\\check q_2 \end{pmatrix}-\tilde \pi_0^\bb  \begin{pmatrix} \kappa(\bb ,y)\\0 \end{pmatrix}\right|\le C_0 (|\check\alpha_1(s)|+\check \alpha_- (s)).
\end{equation}
Using (\ref{B1}), (\ref{B2}), (\ref{B3}) and (\ref{B4}), to bound the terms of equation (\ref{98,5}) we get:
\begin{equation}\label{A3}
\left|\theta'\right|\le \frac{C_0}{1-\bb^2}|\bb '|\tilde\alpha_{-}+C_0 (\check \alpha_1^2 +\check \alpha_{-}^2+\tilde \alpha_{-}^2)+C_0|\theta'| (|\check\alpha_1|+\check\alpha_{-}).
\end{equation}

Using (\ref{179}) and (\ref{183}), we see that
$$|\theta'|+\frac{2 \kappa_0}{p-1} \frac{|\bb '|}{1-\bb^2}\le C_0 \epsilon \frac{|\bb '|}{1-\bb^2}+C_0 (\check \alpha_1^2 +\check \alpha_{-}^2+\tilde \alpha_{-}^2)+C_0 \epsilon |\theta'|,$$
hence
$$(1-C_0 \epsilon) |\theta'|+(\frac{2 \kappa_0}{p-1}-C_0 \epsilon ) \frac{|\bb '|}{1-\bb^2}\le C_0 (\check \alpha_1^2 +\check \alpha_{-}^2+\tilde \alpha_{-}^2).$$
Taking $\epsilon$ small enough, we get (\ref{184}). Then using (\ref{184}) to bound the term of the right hand side of (\ref{A2}) we get (\ref{185}).
\bigskip\\
{\bf Part 2: A kind of Lyapunov functional for system (\ref{170})-(\ref{170'})  }\\
We need to put together information from $\check q_-$ and $\tilde q_-$ in order to conclude. Handling each one alone doesn't allow to control the terms $\int_{-1}^{1} \check q_2 \check{f}_{\bb }(q_1) \rho dy $ and $\int_{-1}^{1} \tilde q_2 \tilde{f}_{\bb }(q_1) \rho dy $ which appear in the differential inequalities satisfied by $\check \alpha_-$ and $\tilde \alpha_-$ (use (\ref{y1}) and (\ref{y2}) below). We claim that (\ref{186}) follows from the following Lemmas:
\begin{lem}\label{5.40}{\bf (Preliminary estimates for $\check q_{-}$)} There exists $\epsilon_3 > 0$ such that if $\epsilon \le \epsilon_3$ in the hypotheses of Proposition \ref{5.2}, then
\begin{align*}
\Big{|}\Big{|}\partial_s \check q_{-} - \check L_\bb  (\check q_{-})-\check \pi_{-}^\bb   \begin{pmatrix} 0\\\check{f}_{\bb }(q_1) \end{pmatrix} \Big{|}\Big{|}_{\mathcal{ H}}&\le C_0 (\check \alpha_1^2 +\check \alpha_{-}^2+\tilde \alpha_{-}^2)^\frac{3}{2},
\\
\left| \check \varphi_\bb  \left(\check q_{-}, \check \pi_{-}^\bb   \begin{pmatrix} 0\\\check{f}_{\bb }(q_1) \end{pmatrix}\right)-\int_{-1}^{1} \check q_2 \check{f}_{\bb }(q_1) \rho dy \right| &\le C_0 (\check \alpha_1^2 +\check \alpha_{-}^2+\tilde \alpha_{-}^2)^\frac{3}{2}.
\end{align*}
\end{lem}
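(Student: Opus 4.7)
The plan is to derive the evolution equation for $\check q_{-}$ starting from the decomposition $\check q = \check\alpha_1(s)\check F_1(\bb(s),\cdot) + \check q_{-}$ (which uses $\check\alpha_0\equiv 0$ from the modulation condition (\ref{167})) together with equation (\ref{170}). A direct computation from (\ref{defk}) and (\ref{110}) reveals the key algebraic identity
\begin{equation*}
\bigl(\partial_\bb \kappa(\bb,\cdot),\,0\bigr) = -\frac{2\kappa_0}{(p-1)(1-\bb^2)}\,\check F_0(\bb,\cdot),
\end{equation*}
so the a priori ``bad'' term $-\bb'(\partial_\bb\kappa,0)$ in (\ref{170}) is merely a multiple of $\check F_0(\bb,\cdot)$ and thus only affects the $\check F_0$-component of the residual.

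For the first estimate, I would differentiate $\check q = \check\alpha_1\check F_1 + \check q_{-}$ in $s$, use $\check L_\bb\check F_1=\check F_1$ and the preceding identity, then plug in (\ref{170}) to obtain
\begin{equation*}
\partial_s\check q_{-}-\check L_\bb\check q_{-}-\check\pi_{-}^\bb\begin{pmatrix}0\\\check f_\bb(q_1)\end{pmatrix} = A_0\check F_0+A_1\check F_1-\check\alpha_1\bb'\partial_\bb\check F_1+\theta'\tilde q,
\end{equation*}
where $A_0=\check\pi_0^\bb(0,\check f_\bb)+\tfrac{2\kappa_0\bb'}{(p-1)(1-\bb^2)}$ and $A_1=\check\pi_1^\bb(0,\check f_\bb)+\check\alpha_1-\check\alpha_1'$. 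The next move is to rewrite $A_0$ and $A_1$ using the modulation identities: differentiating $\check\pi_0^\bb(\check q)\equiv 0$ and $\check\pi_1^\bb(\check q)=\check\alpha_1$ in $s$, and invoking $\check L_\bb^*\check W_\lambda=\lambda\check W_\lambda$ to eliminate the $\check L_\bb\check q$ contribution, one finds
\begin{equation*}
A_\lambda = -\bb'\,\phi(\partial_\bb\check W_\lambda,\check q)-\theta'\,\check\pi_\lambda^\bb(\tilde q), \qquad \lambda=0,1.
\end{equation*}
Every surviving term now carries a factor from $\{\bb',\theta'\}$, which is quadratic in $\|q\|_\mathcal{H}$ by (\ref{184}), multiplied by a factor bounded linearly in $\|q\|_\mathcal{H}$; the $1/(1-\bb^2)$ weights from (\ref{majoration}) and (\ref{normalization}) are absorbed by $|\bb'|/(1-\bb^2)\le C\|q\|_\mathcal{H}^2$, delivering the cubic bound.

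For the second estimate, the crucial observation is that the first component of $(0,\check f_\bb)$ vanishes, so (\ref{134'}) collapses to $\check\varphi_\bb\bigl(\check q,(0,\check f_\bb)\bigr)=\int_{-1}^1\check q_2\,\check f_\bb\,\rho\,dy$. Writing $\check q-\check q_{-}=\check\alpha_1\check F_1$ and $(0,\check f_\bb)-\check\pi_{-}^\bb(0,\check f_\bb)=\check\pi_0^\bb(0,\check f_\bb)\check F_0+\check\pi_1^\bb(0,\check f_\bb)\check F_1$, I would reduce the difference to
\begin{equation*}
-\check\alpha_1\,\check\varphi_\bb\bigl(\check F_1,(0,\check f_\bb)\bigr)-\sum_{\lambda=0,1}\check\pi_\lambda^\bb(0,\check f_\bb)\,\check\varphi_\bb(\check q_{-},\check F_\lambda).
\end{equation*}
Claim \ref{5.3} combined with Hardy and Sobolev-type inequalities will yield $|\check\pi_\lambda^\bb(0,\check f_\bb)|,\,|\check\varphi_\bb(\check F_1,(0,\check f_\bb))|\le C\|q\|_\mathcal{H}^2$, while the continuity of $\check\varphi_\bb$ on $\mathcal{H}\times\mathcal{H}$ gives $|\check\varphi_\bb(\check q_{-},\check F_\lambda)|\le C\|q\|_\mathcal{H}$ and $|\check\alpha_1|\le C\|q\|_\mathcal{H}$ by (\ref{183}); combining these produces the cubic bound.

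The main obstacle I anticipate is the careful bookkeeping of the $\bb$-dependence of the projectors and the eigenfunctions when differentiating in $s$: one must distinguish $\frac{d}{ds}\check\pi_\lambda^{\bb(s)}(\check q(s))$, which carries an extra cross-term $\bb'\phi(\partial_\bb\check W_\lambda,\check q)$, from $\check\pi_\lambda^\bb(\partial_s\check q)$. It is precisely this cross-term, together with the $\check F_0$-identity above and the modulation constraint $\check\pi_0^\bb(\check q)=0$, that kills what would otherwise be a quadratic remainder and leaves only cubic ones.
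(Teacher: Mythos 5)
Your proposal is correct and amounts to a detailed reconstruction of the argument the paper delegates to Claim 5.4 of Merle--Zaag \cite{MR2362418}, augmented (as the paper notes) by the new term $\theta'(\tilde q_1,\tilde q_2)$, which you correctly bound as cubic via (\ref{183}) and (\ref{184}). The identity $(\partial_\bb\kappa,0)=-\tfrac{2\kappa_0}{(p-1)(1-\bb^2)}\check F_0(\bb,\cdot)$, the elimination of the would-be quadratic remainders through the differentiated modulation constraints $\check\pi_0^\bb(\check q)\equiv0$, $\check\pi_1^\bb(\check q)=\check\alpha_1$ together with $\check L_\bb^*\check W_\lambda=\lambda\check W_\lambda$, and the collapse of $\check\varphi_\bb\bigl(\check q,(0,\check f_\bb)\bigr)$ to $\int\check q_2\check f_\bb\rho$ are precisely the mechanisms behind the referenced real-case proof, so this is essentially the same route made explicit.
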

\begin{proof} Since the equation (\ref{170}) satisfied by $\check q$ is the same as in the real case treated in \cite{MR2362418}, except for the last term $\theta'(\tilde q_1,\tilde q_2)$, we refer the reader to Claim 5.4 page 106 in \cite{MR2362418}, and focus only on the last term. Using (\ref{183}) and (\ref{184}), we see that 
$$||\theta' (\tilde q_1,\tilde q_2)||_{\mathcal{H}}\le  C_0 (\check \alpha_1^2 +\check \alpha_{-}^2+\tilde \alpha_{-}^2)^\frac{3}{2},$$
which is precisely the error in the right-hand sides of the inequalities in Lemma \ref{5.40}.
Since our equation (\ref{equa}) satisfies (\ref{179}) and (\ref{183}) , we see that the proof of Merle and Zaag in page 108 in \cite{MR2362418} can be adapted in our case.
\end{proof}
\begin{lem}{\bf (Preliminary estimates for $ \tilde q_{-}$)} \label{5.4} There exists $\epsilon_4 > 0$ such that if $\epsilon \le \epsilon_4$ in the hypotheses of Proposition \ref{5.2}, then
\begin{align}\label{201}
\Big{|}\Big{|}\partial_s \tilde q_{-} - \tilde L_\bb  (\tilde q_{-})-\tilde \pi_{-}^\bb   \begin{pmatrix} 0\\\tilde{f}_{\bb }(q_1) \end{pmatrix} \Big{|}\Big{|}_{\mathcal{ H}}&\le C_0 (\check \alpha_1^2 +\check \alpha_{-}^2+\tilde \alpha_{-}^2)^\frac{3}{2},
\\\label{202}
\Big{|} \tilde \varphi_\bb  \left(\tilde q_{-}, \tilde \pi_{-}^\bb   \begin{pmatrix} 0\\\tilde{f}_{\bb }(q_1) \end{pmatrix}\right)-\int_{-1}^{1} \tilde q_2 \tilde{f}_{\bb }(q_1) \rho dy \Big{|} &\le C_0 (\check \alpha_1^2 +\check \alpha_{-}^2+\tilde \alpha_{-}^2)^\frac{3}{2},
\\\label{203}
\Big{|}\int_{-1}^{1} \check q_2 \check{f}_{\bb }(q_1) \rho dy +\int_{-1}^{1} \tilde q_2 \tilde{f}_{\bb }(q_1) \rho dy-\frac{d}{ds} \int_{-1}^{1}\mathcal{ F}_{\bb(s)} \rho dy  \Big{|} &\le C_0 (\check \alpha_1^2 +\check \alpha_{-}^2+\tilde \alpha_{-}^2)^2.
\end{align}
\end{lem}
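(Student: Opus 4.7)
The three estimates are of different natures; I treat them in turn. The key structural facts are: (a) the modulation condition (\ref{167}) forces $\tilde\alpha_0(s)\equiv 0$, so $\tilde q = \tilde q_-$ pointwise and $\partial_s \tilde q_- = \partial_s \tilde q$; (b) $\tilde F_0 = (\kappa(\bb,\cdot),0)$, and since $\kappa$ solves the stationary equation (\ref{47}) we have $\mathcal{L}\kappa + \tilde\psi\kappa = 0$, which via (\ref{135}) gives the orthogonality $\tilde\varphi_\bb(\tilde q_-, \tilde F_0) = 0$.

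For (\ref{201}), I start from (\ref{170'}), decompose $(0,\tilde f_\bb) = \tilde\pi_0^\bb(0,\tilde f_\bb)\,\tilde F_0 + \tilde\pi_{-}^\bb(0,\tilde f_\bb)$, and use $\tilde q = \tilde q_-$ to write
\[
\partial_s \tilde q_- - \tilde L_\bb \tilde q_- - \tilde\pi_{-}^\bb\begin{pmatrix}0\\\tilde f_\bb\end{pmatrix} = \tilde\pi_0^\bb\begin{pmatrix}0\\\tilde f_\bb\end{pmatrix}\tilde F_0 - \theta'(s)\begin{pmatrix}\kappa + \check q_1\\\check q_2\end{pmatrix}.
\]
Term-by-term bounds give only a quadratic right-hand side, which is insufficient. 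The hidden cancellation is revealed by differentiating the constraint $\tilde\pi_0^{\bb(s)}(\tilde q(s))\equiv 0$ in $s$ and projecting (\ref{170'}) by $\tilde\pi_0^\bb$, yielding the algebraic identity
\[
\theta'(s)\bigl(1+\tilde\pi_0^\bb(\check q_1,\check q_2)\bigr) = \tilde\pi_0^\bb\begin{pmatrix}0\\\tilde f_\bb\end{pmatrix} + \bb'(s)\,\phi(\partial_\bb \tilde W_0,\tilde q).
\]
Combined with (\ref{184}), (\ref{normalization}), (\ref{183}) and the smallness of $\check q$, this gives $\theta'(s) = \tilde\pi_0^\bb(0,\tilde f_\bb) + O\bigl((\check\alpha_1^2+\check\alpha_-^2+\tilde\alpha_-^2)^{3/2}\bigr)$. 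Inserting this back and using $\tilde F_0 = (\kappa,0)$, the two quadratic leading contributions cancel and the error reduces to $-\tilde\pi_0^\bb(0,\tilde f_\bb)(\check q_1,\check q_2)$ plus a $3/2$-remainder, of $\mathcal{H}$-norm at most $C_0(\check\alpha_1^2+\check\alpha_-^2+\tilde\alpha_-^2)\cdot||\check q||_{\mathcal{H}}$, which is $O\bigl((\check\alpha_1^2+\check\alpha_-^2+\tilde\alpha_-^2)^{3/2}\bigr)$ as required.

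For (\ref{202}), the same decomposition and bilinearity of $\tilde\varphi_\bb$ give
\[
\tilde\varphi_\bb\bigl(\tilde q_-, \tilde\pi_{-}^\bb(0,\tilde f_\bb)\bigr) = \tilde\varphi_\bb\bigl(\tilde q_-, (0,\tilde f_\bb)\bigr) - \tilde\pi_0^\bb(0,\tilde f_\bb)\,\tilde\varphi_\bb(\tilde q_-, \tilde F_0).
\]
By (\ref{134}) with vanishing first component, the first right-hand term equals $\int \tilde q_2 \tilde f_\bb \rho\,dy$, and the second vanishes by the orthogonality noted above. Thus (\ref{202}) holds as an exact equality, and the claimed cubic bound is trivial.

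For (\ref{203}), I differentiate $\int \mathcal{F}_{\bb(s)}(q_1(y,s))\rho(y)\,dy$ via the chain rule. Definition (\ref{*F*}) gives $\partial_{\check q_1}\mathcal{F}_\bb = \check f_\bb$ and $\partial_{\tilde q_1}\mathcal{F}_\bb = \tilde f_\bb$; substituting $\partial_s\check q_1$, $\partial_s\tilde q_1$ read off from (\ref{170})--(\ref{170'}) isolates the main term $\int(\check q_2\check f_\bb + \tilde q_2 \tilde f_\bb)\rho\,dy$ plus a $\bb'$-correction $\bb'\int(\partial_\bb\mathcal{F}_\bb - \check f_\bb\,\partial_\bb\kappa)\rho\,dy$ and a $\theta'$-correction $-\theta'\int(\check f_\bb\tilde q_1 - \tilde f_\bb(\kappa+\check q_1))\rho\,dy$. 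Direct algebra yields the clean simplifications
\[
\partial_\bb\mathcal{F}_\bb - \check f_\bb\,\partial_\bb\kappa = -\tfrac{p-1}{2}\partial_\bb\kappa\cdot\kappa^{p-2}(p\check q_1^2 + \tilde q_1^2), \quad \check f_\bb\tilde q_1 - \tilde f_\bb(\kappa+\check q_1) = -(p-1)\kappa^{p-1}\check q_1 \tilde q_1,
\]
both quadratic in $q_1$; the corresponding integrals are thus $O(\check\alpha_1^2+\check\alpha_-^2+\tilde\alpha_-^2)$ by arguments analogous to Lemma \ref{2.2}. Combined with $|\bb'|,|\theta'| = O(\check\alpha_1^2+\check\alpha_-^2+\tilde\alpha_-^2)$ from (\ref{184}), both modulation corrections are $O\bigl((\check\alpha_1^2+\check\alpha_-^2+\tilde\alpha_-^2)^2\bigr)$ as required. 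The main obstacle throughout is the cancellation in (\ref{201}): pointwise estimates yield only a quadratic error, and one must expose the approximate identity $\theta' \approx \tilde\pi_0^\bb(0,\tilde f_\bb)$ enforced by the modulation constraint to gain the extra half-power.
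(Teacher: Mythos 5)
Your proof is correct and in its essentials runs the same road as the paper's; the differences are cosmetic, though two of them are worth recording. For (\ref{201}) the paper applies $\tilde\pi_-^\bb$ to (\ref{170'}) and exploits $\tilde\pi_-^\bb(\tilde F_0)=0$ to kill both the $\tilde\pi_0^\bb(0,\tilde f_\bb)\tilde F_0$ piece and the $\kappa$-part of $\theta'(\kappa+\check q_1,\check q_2)$ in one stroke, leaving the small pieces $\tilde\pi_0^\bb(\partial_s\tilde q)\tilde F_0$ and $-\theta'\tilde\pi_-^\bb(\check q_1,\check q_2)$; you instead isolate the scalar relation $\theta'\bigl(1+\tilde\pi_0^\bb(\check q_1,\check q_2)\bigr)=\tilde\pi_0^\bb(0,\tilde f_\bb)+\bb'\phi(\partial_\bb\tilde W_0,\tilde q)$ (which is precisely the $\tilde\pi_0^\bb$-projection of the equation combined with the derivative of the orthogonality constraint) and exhibit the cancellation term by term. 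A short computation shows the two residual decompositions are identical, so this is a change of bookkeeping, not of idea; your version makes the compensation $\theta'\approx\tilde\pi_0^\bb(0,\tilde f_\bb)$ explicit, while the paper's $\tilde\pi_-^\bb$-projection is more economical (and incidentally the paper's (\ref{naro2}), asserting $\tilde\pi_-^\bb(\partial_s\tilde q)=\partial_s\tilde q$, is not exactly true — there is an $O(\|\bb'\|\|\tilde q\|)$ correction — but this is within the $(\check\alpha_1^2+\check\alpha_-^2+\tilde\alpha_-^2)^{3/2}$ budget, a point your derivation tracks cleanly). For (\ref{202}) you observe a small sharpening the paper does not use: since $\mathcal{L}\kappa+\tilde\psi\kappa=0$ gives $\tilde\varphi_\bb(\tilde q_-,\tilde F_0)=0$ via (\ref{135}), the quantity in (\ref{202}) is identically zero, whereas the paper only bounds it by $C|\tilde B_0(s)|\,\|\tilde q\|_{\mathcal{H}}$. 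For (\ref{203}) your chain-rule identity and the two algebraic simplifications of the $\bb'$- and $\theta'$-corrections (both verified correct) are exactly the paper's (\ref{x2}), (\ref{x3}) and (\ref{x5}).
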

\noindent{\bf Remark}: Note that (\ref{203}) is one of the new features of our paper. Indeed, it directly involves the complex structure.\\
Let us derive (\ref{186}) and (\ref{187}) from Lemmas \ref{5.40} and \ref{5.4}, then we prove Lemma \ref{5.4}.\\
\noindent{\it Proof of (\ref{186}) and (\ref{187})}: Using the definition of $\check \alpha_-$ (\ref{181}), we proceed like in page 107 in \cite{MR2362418} and we apply the bound (\ref{184}) on $|\bb '|$, so we get
\begin{equation*}
|\check \alpha_{-}'\check \alpha_{-}-\check \varphi_\bb (\check q_{-},\partial_s \check q_{-})|\le C_0 |\bb '|\frac{\check \alpha_{-}^2}{1-\bb^2}\le C_0  (\check \alpha_1^2 +\check \alpha_{-}^2+\tilde \alpha_{-}^2)^2.
\end{equation*}
Using Lemma \ref{5.40}, we write
\begin{align*}
&\Big{|}\check \alpha_{-}'\check \alpha_{-}-\check \varphi_\bb (\check q_{-},\check L_\bb  \check q_{-})-\int_{-1}^{1}\check q_2 \check{f}_{\bb }(q_1) \rho dy\Big{|} \notag\\
&\le C_0 (\check \alpha_1^2 +\check \alpha_{-}^2+\tilde \alpha_{-}^2)^\frac{3}{2}+\Big{|}\check \varphi_\bb \left(\check q_{-},\partial_s \check q_--\check L_\bb  (\check q_{-})-\check \pi_{-}^\bb   \begin{pmatrix} 0\\\check{f}_{\bb }(q_1) \end{pmatrix}\right)\Big{|}\notag\\
&\le C_0 (\check \alpha_1^2 +\check \alpha_{-}^2+\tilde \alpha_{-}^2)^\frac{3}{2}+||\check q_{-}||_{\mathcal{ H}} (\check \alpha_1^2 +\check \alpha_{-}^2+\tilde \alpha_{-}^2)^\frac{3}{2}\le C_0 (\check \alpha_1^2 +\check \alpha_{-}^2+\tilde \alpha_{-}^2)^\frac{3}{2}.
\end{align*}
 Since we easily gets from the definition (\ref{barL_bb }) of $\check L_\bb $ that  $\check \varphi_\bb  (\check q, \check L_\bb  (\check q))=-\frac{4}{p-1}\int_{-1}^{1} \check q_{-,2}^2 \frac{\rho}{1-y^2} dy $, we conclude that
\begin{equation}\label{y1}
|\check \alpha_{-}'\check \alpha_{-}-\int_{-1}^{1}\check q_2 \check{f}_{\bb }(q_1) \rho dy | 
\le- \frac{4}{p-1}\int_{-1}^{1}\check q_{-,2}^2 \frac{\rho}{1-y^2} dy +C_0 (\check \alpha_1^2 +\check \alpha_{-}^2+\tilde \alpha_{-}^2)^\frac{3}{2}.
\end{equation}
Arguing similarly for $\tilde \alpha_-$ (\ref{181}), we see that
\begin{equation*}
\tilde \alpha_{-}^2(s)=\tilde \varphi_\bb (\tilde q(s), \tilde q(s)).
\end{equation*}
Using the definition (\ref{134}) of $\tilde\varphi_\bb $, we have by differentiation
\begin{equation}\label{204}
\tilde \alpha_{-}'\tilde \alpha_{-}=\tilde \varphi_\bb (\tilde q,\partial_s \tilde q)-\frac{1}{2}\bb'(s)\int_{-1}^{1}\partial_\bb  \tilde\psi (\bb ,y)\tilde q_{1}^2\rho .
\end{equation}
Using the H\"{o}lder inequality, the Hardy-Sobolev estimate of Lemma \ref{2.2} and (\ref{183}), we write
  \begin{equation} \label{205}
  \Big| \int_{-1}^{1} \partial_\bb  \tilde \psi (\bb ,y)\tilde q_{1}^2\rho \Big|\le ||\partial_\bb  \tilde \psi (\bb ,y)||_{L_\rho^\frac{p+1}{p-1}}      ||\tilde q_{1}||_{L_\rho^{p+1}}^2\le C_0 ||\partial_\bb  \tilde \psi (\bb ,y)||_{L_\rho^\frac{p+1}{p-1}} \tilde \alpha_{-}(s)^2.
  \end{equation} 
Since $|\partial_\bb  \tilde \psi (\bb ,y)|\le C/(1+\bb y)^2$ for all $(\bb ,y) \in (-1,1)^2$ from the expression of $\tilde \psi$ in (\ref{barL_bb }), Using Claim \ref{claim}, we see that $||\partial_\bb  \tilde \psi (\bb ,y)||_{L_\rho^\frac{p+1}{p-1}}\le C/(1-\bb^2)$. Therefore, using (\ref{204}), (\ref{205}), and the bound (\ref{184}) on $|\bb '(s)|$, we get
\begin{equation}\label{206}
|\tilde \alpha_{-}'\tilde \alpha_{-}-\tilde \varphi_\bb (\tilde q,\partial_s \tilde q)|\le C_0 |\bb '|\frac{\tilde \alpha_{-}^2}{1-\bb^2}\le C_0  (\check \alpha_1^2 +\check \alpha_{-}^2+\tilde \alpha_{-}^2)^2.
\end{equation}
From (\ref{206}), the continuity of $\tilde \varphi_\bb  $, Lemma \ref{5.4}, we write
\begin{align}\label{207}
&\Big{|}\tilde \alpha_{-}'\tilde \alpha_{-}-\tilde \varphi_\bb (\tilde q,\tilde L_\bb  \tilde q)-\int_{-1}^{1}\tilde q_2 \tilde{f}_{\bb }(q_1) \rho dy\Big{|} \notag\\
&\le C_0 (\check \alpha_1^2 +\check \alpha_{-}^2+\tilde \alpha_{-}^2)^\frac{3}{2}+\Big{|}\tilde \varphi_\bb \left(\tilde q,\partial_s \tilde q-\tilde L_\bb  (\tilde q)-\tilde \pi_{-}^\bb   \begin{pmatrix} 0\\\tilde{f}_{\bb }(q_1) \end{pmatrix}\right)\Big{|}\notag\\
&\le C_0 (\check \alpha_1^2 +\check \alpha_{-}^2+\tilde \alpha_{-}^2)^\frac{3}{2}+||\tilde q||_{\mathcal{ H}} (\check \alpha_1^2 +\check \alpha_{-}^2+\tilde \alpha_{-}^2)^\frac{3}{2}\le C_0 (\check \alpha_1^2 +\check \alpha_{-}^2+\tilde \alpha_{-}^2)^\frac{3}{2}.
\end{align}
Besides, using the expressions of $\tilde L_\bb $ (\ref{tildeL_bb }) and $\tilde \varphi_\bb $ (\ref{135}), we have
 \begin{align}\label{208}
 &\tilde \varphi_\bb  (\tilde q, \tilde L_\bb  (\tilde q))= \tilde \varphi_\bb  \left(   \begin{pmatrix} \tilde{q_2}\\\mathcal{L}\tilde{q}_1+\tilde\psi (\bb , y)\tilde{q}_1-\frac{p+3}{p-1} \tilde{q}_2- 2 y \partial_y \tilde{q}_2 \end{pmatrix},
 \begin{pmatrix} \tilde{q}_1\\\tilde{q}_2\end{pmatrix} \right) \notag \\
 &= -\int_{-1}^{1}  \tilde{q}_2(\mathcal{L}\tilde{q}_1+\tilde\psi (\bb , y)\tilde{q}_1) \rho dy\notag \\
 &+ \int_{-1}^{1}  (\mathcal{L}\tilde{q}_1+\tilde\psi (\bb , y)\tilde{q}_1-\frac{p+3}{p-1} \tilde{q}_2- 2 y  \tilde{q}_2' )\tilde{q}_2 \rho dy\notag \\
 &= -\frac{p+3}{p-1}\int_{-1}^{1}  \tilde{q}_2^2 \rho dy-\int_{-1}^1 y (\tilde{q}_2^2)'\rho dy=-\frac{p+3}{p-1}\int_{-1}^{1}  \tilde{q}_2^2 +\int_{-1}^{1}  \tilde{q}_2^2 (\rho-y\rho')  dy\notag \\
 &=- \frac{4}{p-1}\left[ \int_{-1}^{1} \tilde q_{-,2}^2 \rho dy+\int_{-1}^{1} \tilde q_{-,2}^2 \frac{y^2 \rho}{1-y^2} dy \right]=-\frac{4}{p-1}\int_{-1}^{1} \tilde q_{-,2}^2 \frac{\rho}{1-y^2} dy .
 \end{align}
 Using (\ref{207}) and (\ref{208}), we see that
\begin{equation}\label{y2}
\left| \tilde \alpha_{-}' \tilde \alpha_{-}-\int_{-1}^{1} \tilde q_2  \tilde{f}_{\bb }(q_1) \rho dy \right| 
\le -\frac{4}{p-1}\int_{-1}^{1} \tilde q_{-,2}^2 \frac{\rho}{1-y^2} dy +C_0 (\check \alpha_1^2 +\check \alpha_{-}^2+\tilde \alpha_{-}^2)^\frac{3}{2}.
\end{equation}
Therefore, using (\ref{y1}) with (\ref{y2}), we write
\begin{align}\label{x1}
&\left| \frac{1}{2}(\check \alpha_{-}^2+\tilde \alpha_{-}^2)'-\left[ \int_{-1}^{1} \check q_2  \check{f}_{\bb }(q_1) +\tilde q_2  \tilde{f}_{\bb }(q_1)  \rho dy\right]\right | \\\notag
&\le -\frac{4}{p-1}\int_{-1}^{1} (\check q_{-,2}^2+\tilde q_{-,2}^2) \frac{\rho}{1-y^2} dy +C_0 (\check \alpha_1^2 +\check \alpha_{-}^2+\tilde \alpha_{-}^2)^\frac{3}{2}.
\end{align}
Better yet, by (\ref{203}) we see that estimate (\ref{186}) holds with $R_{-}$ given by (\ref{209}). Using Claim \ref{5.3}, Lemma \ref{2.2} and condition (\ref{179}) (considering first the case $p\ge 2$ and then the case $1<p<2$), we see that (\ref{187}) holds. It remains to prove Lemma \ref{5.4} in order to conclude the proof of (\ref{188}) and (\ref{189}).
\begin{proof}[Proof of Lemma \ref{5.4}]$ $\\
$\bullet$ {\it Proof of (\ref{201})}:
We first project equation (\ref{170'}) using the negative projector $\tilde \pi_-^\bb $ introduced in Definition \ref{4.6}
\begin{eqnarray}\label{naro1}
\tilde \pi_-^\bb (\partial_s \tilde q)=\tilde \pi_-^\bb (\tilde L_{\bb(s)} \tilde q)+\tilde \pi_-^\bb  \begin{pmatrix} 0\\\tilde{f}_{\bb }(q_1) \end{pmatrix}-\theta'(s)\tilde \pi_-^\bb \begin{pmatrix} \kappa(\bb ,\cdot)+\check{q_1}\\\check{q_2} \end{pmatrix}, \end{eqnarray}
We write the expansion (\ref{s}) with $ \partial_s \tilde q$ 
\begin{equation}\label{212}
\partial_s \tilde q=\tilde \pi_0^\bb  (\partial_s \tilde q)\tilde F_0(\bb,\cdot)+\tilde \pi_-^\bb  (\partial_s \tilde q).
\end{equation}
Using (\ref{167}) and (\ref{212}), we see that
\begin{equation}\label{naro2}
\tilde \pi_-^\bb  (\partial_s \tilde q)-\partial_s \tilde q=0.
\end{equation}
From the remark after the Definition \ref{4.6}, we see that $\tilde L_{\bb } (\tilde q) \in \mathcal{ H}_-$ (as $\tilde q\in \mathcal{ H}_-$) and
\begin{equation}\label{naro3}
\tilde \pi_-^\bb  (\tilde L_{\bb } (\tilde q))=\tilde L_{\bb } (\tilde q).
\end{equation}
Using (\ref{s}) with $(\kappa(\bb ,y),0)$, (\ref{1}) and (\ref{pi}), we get 
\begin{equation*}
\tilde \pi_-^\bb  \begin{pmatrix} \kappa(\bb ,\cdot)\\0 \end{pmatrix}=0,
\end{equation*}
therefore, using (\ref{s}) with $\check q$, we write
\begin{eqnarray}\label{naro4}
\Big |\Big |\tilde \pi_-^\bb \begin{pmatrix} \kappa(\bb ,y)+\check{q_1}\\\check{q_2} \end{pmatrix}\Big |\Big |_{\mathcal{H}}&=&
\Big |\Big |\tilde \pi_-^\bb \begin{pmatrix} \check{q_1}\\\check{q_2} \end{pmatrix}\Big |\Big |_{\mathcal{H}}
\le|| \check q ||_{\mathcal{H}}+|\tilde \pi_0^\bb  ( \check q)|||\tilde F_0(\bb,\cdot)||_{\mathcal{H}}\notag \\
&\le& C || \check q ||_{\mathcal{H}} \le C_0(\check \alpha_1^2 +\check \alpha_{-}^2+\tilde \alpha_{-}^2)^\frac{1}{2},
\end{eqnarray}
where we used (\ref{pi}), (\ref{normalization}), (\ref{majoration1}) and (\ref{183}) to get the last line.
Using (\ref{naro1}), (\ref{naro2}), (\ref{naro3}), (\ref{naro4}) and (\ref{184}) we get (\ref{201}).\\
$\bullet${\it Proof of (\ref{202})}: Note from (\ref{s}) that
\begin{equation*}
\begin{pmatrix} 0\\ \tilde f_\bb (q_1)\end{pmatrix} = \tilde B_0(s) \tilde F_0(\bb  ,\cdot)+\tilde \pi_-^\bb  \begin{pmatrix} 0\\ \tilde f_\bb (q_1)\end{pmatrix},
\end{equation*}
where $\tilde B_0(s)=\tilde \pi_0^\bb  \begin{pmatrix} 0\\ \tilde f_\bb (q_1)\end{pmatrix}$. So from the definition (\ref{134}), the bilinearity of $\tilde \varphi_\bb $ and the bound on the norm of $\tilde F_0$, we have
\begin{eqnarray*}
&\,&\Big |\tilde \varphi_\bb  \left( \tilde q, \tilde \pi_-^\bb  \begin{pmatrix} 0\\ \tilde f_\bb (q_1)\end{pmatrix}\right)-\int_{-1}^{1} \tilde q_2 \tilde f_\bb (q_1) \rho dy \Big |\notag\\
&=&\Big |\tilde \varphi_\bb  \left(\tilde q, \begin{pmatrix} 0\\ \tilde f_\bb (q_1)\end{pmatrix} -\tilde B_0(s) \tilde F_0(\bb  ,y)\right)-\tilde \varphi_\bb  \left(\tilde q, \begin{pmatrix} 0\\ \tilde f_\bb (q_1)\end{pmatrix}\right)\Big |\notag\\
&=&\Big |\tilde \varphi_\bb  \left(\tilde q, \tilde B_0(s) \tilde F_0(\bb  ,y) \right) \Big |\le  C |\tilde B_0(s)|||\tilde q||_{\mathcal{H}} \le C (\check \alpha_1^2 +\check \alpha_{-}^2+\tilde \alpha_{-}^2)^\frac{3}{2},
\end{eqnarray*}
from (\ref{198}) and (\ref{183}), (\ref{202}) follows.\\
$\bullet${\it Proof of (\ref{203})}:
We see from the expression of $\mathcal{ F}_{\bb(s)}(q_1)$ (\ref{*F*}) that 
\begin{align}\label{x2}
&\int_{-1}^{1} \check q_2  \check{f}_{\bb }(q_1) +\int_{-1}^{1} \tilde q_2  \tilde{f}_{\bb }(q_1) \rho dy= \int_{-1}^{1} \partial_s \mathcal{ F}_{\bb(s)}(q_1) \rho dy \\&+\notag\int_{-1}^{1}\frac{p-1}{2}\kappa^{p-2}(\bb ) \partial_\bb  \kappa (\bb ) \bb'(s) (p\check q_1^2+\tilde q_1^2) \rho dy
+\theta'(s) \int_{-1}^{1}\left[(\kappa(\bb ,y)+\check q_1)\tilde{f}_{\bb }(q_1)-\tilde q_1 \check{f}_{\bb }(q_1)\right] \rho dy.
\end{align}
Since we have $||\partial_\bb  \kappa(\bb ,\cdot)\kappa(\bb ,_\cdot)^{p-2}||_{L_\rho^{\frac{p+1}{p-1}}}\le C_0/(1-\bb^2)$, from the expression of $\partial_\bb  \kappa(\bb ,y)$
$$\partial_\bb  \kappa(\bb ,y)=-\frac{2 \kappa_0  }{p-1} \frac{(y+\bb )(1-\bb^2)^{\frac{2-p}{p-1}}} {(1+\bb y)^{-\frac{p+1}{p-1}}},$$ 
the definition of (\ref{defk}) $\kappa (\bb ,y)$ and Claim \ref{claim}, we use the H\"{o}lder inequality and the Hardy-Sobolev inequality of Lemma \ref{2.2} to derive that
\begin{equation*}
\left|\int_{-1}^{1} \partial_\bb  \kappa(\bb ,y)\kappa(\bb ,y)^{p-2}(p \check q_1^2+\tilde q_1^2) \rho dy \right|\le \frac{C_0}{1-\bb^2} ||q(s)||_{\mathcal{H}}^2,
\end{equation*}
so, by (\ref{183}) and (\ref{184}), we get
\begin{equation}\label{x3}
\Big|(p-1)\bb'(s) \int_{-1}^{1}\kappa^{p-1}(\bb ,y) \partial_\bb  \kappa (\bb ,y)(p\check q_1^2+\tilde q_1^2) \rho dy\Big|\le C_0 (\check \alpha_1^2 +\check \alpha_{-}^2+\tilde \alpha_{-}^2)^2.
\end{equation}
Since $\forall |\bb|<1$, $||\kappa(\bb ,\cdot)||_{\mathcal{H}_0}\le C_0$ by definition (\ref{defk}), using Claim \ref{5.3}, Lemma \ref{2.2} and (\ref{183}) we see that
\begin{align}
&\Big| \int_{-1}^{1} \check q_1 \tilde{f}_{\bb }(q_1) \rho (y) dy\Big|+\Big| \int_{-1}^{1} \tilde q_1 \check{f}_{\bb }(q_1) \rho (y) dy\Big|\notag\\ &\le C_0\left( \delta_{\{p\ge2\}}\int_{-1}^{1} \kappa(\bb ,y)^{p-2} |q_1(y,s)|^3\rho dy+  \int_{-1}^{1} |q_1(y,s)|^{p+1}\rho dy\right)\notag\\
&\le C_0 \left(\delta_{\{p\ge2\}} ||\kappa(\bb ,\cdot)||^{p-2}_{L_\rho^{p+1}}||q_1||^{p+1}_{L_\rho^{p+1}}+ ||q_1||^{p+1}_{L_\rho^{p+1}} \right)\notag\\
&\le  C_0||q_1(s)||_{\mathcal{H}}^{\check p+1},\;\mbox{ where }\check p=\min(p,2)>1 \notag\\
&\le C_0 (\check \alpha_1^2 +\check \alpha_{-}^2+\tilde \alpha_{-}^2).\label{x5}
\end{align}
Using (\ref{x2}), (\ref{x3}), (\ref{198}), (\ref{x5}) and (\ref{184}) we see that estimate (\ref{203}) holds. This concludes the proof of Lemma \ref{5.4}.
\end{proof}
\medskip
\noindent{\bf Part 3: An additional relation }\\
(iii) First, note from (\ref{183}) that
\begin{align}\label{224}
 \int_{-1}^{1} \check q_1^2 \rho+\int_{-1}^{1} (\partial_y \check q_1)^2 (1-y^2)\rho+\int_{-1}^{1} \check q_2^2 \rho&\le C_0  (\check \alpha_1^2 +\check \alpha_{-}^2),
\\\label{224'}
 \int_{-1}^{1} \tilde q_1^2 \rho+\int_{-1}^{1} (\partial_y \tilde q_1)^2 (1-y^2)\rho+\int_{-1}^{1} \tilde q_2^2 \rho&\le C_0  \tilde\alpha_{-}^2,
\end{align}
and, using equation (\ref{170}) and the definition of $\check L_\bb $ (\ref{barL_bb }), we write
\begin{eqnarray}\label{souris}
&\,&\frac{d}{ds}\int_{-1}^{1} \check q_1 \check q_2 \rho =\int_{-1}^{1}(\theta' (s) \tilde q_1 \check q_2+\check q_2^2-\bb'(s)  \check q_2\partial_\bb  \kappa (\bb ,\cdot)) \rho\notag\\&+&\int_{-1}^{1}\check q_1 (\mathcal{L} \check{q_1}+\check\psi (\bb , \cdot)\check{q_1}-\frac{p+3}{p-1} \check{q_2}- 2 y \partial_y \check{q_2}+ \check f_\bb (q_1)+\theta' (s) \tilde q_2 )\rho.
\end{eqnarray}
Almost of the terms in the right hand side of (\ref{souris}) have been studied in \cite{MR2362418}, except for the two terms with $\theta' (s)$.\\
-Using the Cauchy-Schwartz inequality, (\ref{179}), (\ref{183}) and (\ref{184}), we see that
\begin{eqnarray}\label{129.5}
\Big{|}\int_{-1}^{1} \theta' (s) \tilde q_1 \check q_2\rho \Big{|}&\le&|\theta' (s)|\left( \int_{-1}^{1} \tilde q_1^2\rho \right)^\frac{1}{2}\left( \int_{-1}^{1} \check q_2^2\rho \right)^\frac{1}{2}\le \frac{1}{100} (\check \alpha_1^2 +\check \alpha_{-}^2+\tilde \alpha_{-}^2),
\end{eqnarray}
and,
\begin{eqnarray}
\Big{|}\int_{-1}^{1} \theta' (s) \check q_1 \tilde q_2\rho \Big{|}&\le&|\theta' (s)|\left( \int_{-1}^{1} \check q_1^2\rho \right)^\frac{1}{2}\left( \int_{-1}^{1} \tilde q_2^2\rho \right)^\frac{1}{2}\le \frac{1}{100} (\check \alpha_1^2 +\check \alpha_{-}^2+\tilde \alpha_{-}^2),
\end{eqnarray}
for $\epsilon$ small enough. Using the proof of Proposition 5.2 page 103 in \cite{MR2362418} to control the other terms we get (\ref{188}).\\
By the same way, in order to prove (\ref{188'}), we will bound all the terms on the right hand side of the following:
\begin{eqnarray}\label{souris'}
&\,&\frac{d}{ds}\int_{-1}^{1} \tilde q_1 \tilde q_2 \rho =\int_{-1}^{1}(-\theta' (s) \tilde q_2 (\kappa (\bb,\cdot )+\tilde q_1)+\tilde q_2^2) \rho\notag\\&+&\int_{-1}^{1}\tilde q_1 (\mathcal{L} \tilde{q_1}+\tilde\psi (\bb , \cdot)\tilde{q_1}-\frac{p+3}{p-1} \tilde{q_2}- 2 y \partial_y \tilde{q_2}+ \tilde f_\bb (q_1)-\theta' (s) \check q_2 )\rho.
\end{eqnarray}\\
-We use the Cauchy-Schwartz inequality, (\ref{184}), (\ref{224}), (\ref{224'}), (\ref{179}) and bound $\kappa(\bb ,y)$ to write for $\epsilon$ small enough,
\begin{eqnarray}\label{m1}
\left|\int_{-1}^{1}\theta' (s) \tilde q_2 (\kappa(\bb ,y)+\tilde q_1)\rho\right |&\le& |\theta' (s)|\left( ||\kappa (\bb ,\cdot)||_{L^2_\rho}+(\int_{-1}^{1} \tilde q_1^2\rho)^\frac{1}{2}\right) (\int_{-1}^{1} \tilde q_2^2\rho)^\frac{1}{2}\notag\\
&\le& C (\check \alpha_1^2 +\check \alpha_{-}^2+\tilde \alpha_{-}^2)^\frac{3}{2}\le\frac{1}{100} (\check \alpha_1^2 +\check \alpha_{-}^2+\tilde \alpha_{-}^2).
\end{eqnarray}
-From the definition of $\tilde \varphi_\bb $ (\ref{135}) and the definition of $\tilde \alpha_-$ (\ref{181'}), we write
\begin{equation}\label{m2}
 \int_{-1}^{1}\tilde q_1 (\mathcal{L} \tilde{q_1}+\tilde\psi (\bb , \cdot)\tilde{q_1})\rho=-\tilde \varphi_\bb \begin{pmatrix} \begin{pmatrix}\tilde{q_1}\\0\end{pmatrix},\begin{pmatrix}\tilde{q_1}\\0\end{pmatrix}\end{pmatrix}=-\tilde \alpha_{-}^2.
\end{equation}
-Using integration by parts, the fact that $|y\partial_y \rho (y)|\le C \frac{\rho}{1-y^2}$, the Cauchy-Schwartz inequality, Lemma \ref{2.2}, (\ref{224}) and (\ref{224'}), we write
\begin{eqnarray}\label{m3}
 &\,&\left | -\frac{p+3}{p-1} \int_{-1}^{1} \tilde q_1 \tilde q_1  \rho -2\int_{-1}^{1} \tilde q_1 y \partial_y\tilde q_2 \rho\right |\notag\\
&=&\left | 2\int_{-1}^{1} \tilde q_2  \partial_y\tilde q_1  y\rho+\left(2-\frac{p+3}{p-1} \right)\int_{-1}^{1} \tilde q_1 \tilde q_1  \rho+2\int_{-1}^{1} \tilde q_2 \tilde q_1y \partial_y \rho \right |\notag\\
&\le& C\int_{-1}^{1}\left( |\tilde q_2||\partial_y \tilde q_1|\rho+|\tilde q_2||\tilde q_1| \frac{\rho}{1-y^2}\right)\notag\\
&\le&\left(\int_{-1}^{1} \tilde q_2^2 \frac{\rho}{1-y^2}\right)^{1/2} \left[ \int_{-1}^{1} ( \partial_y \tilde q_1)^2(1-y^2)\rho+\int_{-1}^{1} \tilde q_1^2 \frac{\rho}{1-y^2}\right]^{1/2}\\
&\le& C_0 \tilde \alpha_{-} (\int_{-1}^{1} \tilde q_2^2 \frac{\rho}{1-y^2})^{1/2}\le\frac{1}{100}\tilde \alpha_{-}^2+C\int_{-1}^{1} \tilde q_2^2 \frac{\rho}{1-y^2}.\notag
\end{eqnarray}
- Arguing as for (\ref{x5}) and (\ref{129.5}), using (\ref{179}) we write for $\epsilon$ small enough 
\begin{align}\label{m4}
\Big| \int_{-1}^{1} \tilde q_1 \tilde{f}_{\bb }(q_1) \rho (y) dy\Big| &\le C_0||q_1(s)||_{\mathcal{H}}^{\check p+1}\le \frac{1}{100}(\check \alpha_1^2 +\check \alpha_{-}^2+\tilde \alpha_{-}^2)\\
\label{m5}
\left | \int_{-1}^{1}\theta' (s) \check q_2 \tilde q_1\rho\right | &\le C|\theta' (s)| (\check \alpha_1^2 +\check \alpha_{-}^2+\tilde \alpha_{-}^2)\le\frac{1}{100} (\check \alpha_1^2 +\check \alpha_{-}^2+\tilde \alpha_{-}^2).
\end{align}
Collecting (\ref{souris'})-(\ref{m5}), we get
\begin{equation*}
 \frac{d}{ds}\int_{-1}^{1} \tilde q_1 \tilde q_2 \rho \le -\frac{4}{5}\tilde \alpha_{-}^2+ C_0\int_{-1}^{1} \tilde q_2^2\frac{\rho}{1-y^2}+ C_0 (\check \alpha_1^2 +\check \alpha_{-}^2).
\end{equation*}

\bigskip 
{\bf Part 4: Energy barrier}\\
(iv) Using the definition of $q(y,s)$ (\ref{168}), we can make an expansion of $E(w(s),\partial_s w(s))$ (\ref{15}) for $q\rightarrow 0$ in $\mathcal{H}$ and get after from straightforward computations
\begin{equation}
 E(w(s),\partial_s w(s))= E(\kappa_0,0)+\frac{1}{2}(\check \varphi_\bb (\check q, \check q)+\tilde \varphi_\bb (\tilde q,\tilde q))-\int_{-1}^{1}\mathcal{F}_\bb  (q_1) \rho dy \label{228}
\end{equation}
where $ \check \varphi_\bb $, $ \tilde \varphi_\bb $ and $\mathcal{F}_\bb  (q_1)$ are defined in (\ref{134'}), (\ref{134}) and (\ref{*F*}).\\
Since we have (\ref{209}), (\ref{187}), (\ref{179}) and (\ref{183}):
\begin{equation}
 \left | \int_{-1}^{1}\mathcal{F}_\bb  (q_1) \rho dy \right | \le C ||q(s)||_{\mathcal{H}}^{\check p+1}\le C \epsilon^{\check p-1} (\check \alpha_1^2+\check \alpha_-^2+\tilde \alpha_-^2),\label{229}
\end{equation}
we note that for some $C_1 >0$
\begin{equation}\label{230}
 \check \varphi_\bb (\check q, \check q) \le C_0 \check \alpha_1^2- C_1\check \alpha_-^2,
\end{equation}
which was proved in \cite{MR2362418} in page 113. From (\ref{17}), (\ref{228}), (\ref{230}) and (\ref{229}), we see that taking $\epsilon$ small enough so that $C \epsilon^{\check p-1}<\frac{C_1}{4}$, we get
\begin{equation*}
 0\le E(w(s),\partial_s w(s))-E(\kappa_0,0)\le \left( \frac{C_0}{2}+\frac{C_1}{4}\right) \check \alpha_-^2-\frac{C_1}{4}\check \alpha_1^2+\left(\frac{1}{2}+\frac{C_1}{4}\right) \tilde \alpha_-^2.
\end{equation*}
which yields (\ref{189}).
\end{proof}
\subsection{Exponential decay of the different components}\label{4.4}
Our aim is to show that $||q(s)||_{\mathcal{H}}\rightarrow 0$ and that both $\theta$ and $\bb$ converge as $s\rightarrow \infty$. An important issue will be to show that the unstable mode $\check \alpha_1$, which satisfies equation (\ref{183}) never dominates. This is true thanks to item $(iv)$ in Proposition \ref{5.2} (see the third remark following that Proposition for more details). Let us first introduce a more adapted notation and rewrite Proposition \ref{5.2}.\\
If we introduce
\begin{equation}\label{234}
 \lambda(s)=\frac{1}{2} \log\left(\frac{1+\bb(s)}{1-\bb(s)}\right), a(s)=\check \alpha_1(s)^2\, \mbox{and}\, b(s)=\check\alpha_-(s)^2+\tilde\alpha_-(s)^2+R_-(s) 
\end{equation}
(note that $\bb(s)=\tanh(\lambda(s))$), then we see from (\ref{187}), and (\ref{183}) that if (\ref{179}) holds, then $|b-(\check\alpha_-(s)^2+\tilde\alpha_-(s)^2)|\le C_0 \epsilon^{\check p-1}(\check\alpha_1(s)^2+\check\alpha_-(s)^2+\tilde\alpha_-(s)^2)$, hence
\begin{equation}\label{235}
 \frac{99}{100}\check\alpha_-(s)^2+ \frac{99}{100}\tilde\alpha_-(s)^2-\frac{1}{100} a\le b\le  \frac{101}{100}\check\alpha_-(s)^2+ \frac{101}{100}\tilde\alpha_-(s)^2+\frac{1}{100} a
\end{equation}
for $\epsilon$ small enough. Therefore, using Proposition \ref{5.2}, estimate (\ref{179}), (\ref{183}) and the fact that $\lambda'(s)=\frac{\bb'(s)}{1-\bb(s)^2}$, we derive the following:
\begin{cl}\label{5.5}{\bf (Relations between $a$, $b$, $\lambda$, $\theta$, $\int_{-1}^1 \check q_1\check q_2 \rho$ and $\int_{-1}^1 \tilde q_1\tilde q_2 \rho$)} There exist positive $\epsilon_4$, $K_4$ and $K_5$ such that if w is a solution to equation (\ref{equa}) such that (\ref{167}) and (\ref{179}) hold at some time s for some $\epsilon\le \epsilon_4$, where q is defined in (\ref{168}), then using the notation (\ref{234}), we have:
\item{(i)} {\bf (Size of the solution)}
\begin{align}
\frac{1}{K_4}(a(s)+b(s))\le || q(s)||_\mathcal{H}^2&\le K_4 (a(s)+b(s))\le K_4^2 \epsilon^2,\label{236}
\\
 |\theta'(s)|+|\lambda'(s)|&\le K_4 (a(s)+b(s))\le K_4^2 ||q(s)||_\mathcal{H}^2 ,\label{237}
\\
 \left | \int_{-1}^{1}\check q_1\check q_1\rho \right |&\le K_4 (a(s)+b(s)),\label{238}
\\
\left | \int_{-1}^{1}\tilde q_1\tilde q_1\rho \right |&\le K_4 b(s),\label{238'}
 \end{align}
and (\ref{235}) holds.
\item{(ii)} {\bf (Equations)}
\begin{align}\label{239}
 \frac{3}{2} a-K_4 \epsilon b&\le a' \le \frac{5}{2} a-K_4 \epsilon b,
\\\label{240}
b'&\le -\frac{8}{p-1}\int_{-1}^{1}(\check q_{-,2}^2+\tilde q_{-,2}^2)\frac{\rho}{1-y^2}dy+ K_4\epsilon (a+b),
\\\notag
 \frac{d}{ds}\int_{-1}^1 (\check q_1 \check q_2+ \tilde q_1 \tilde q_2) \rho &\le -\frac{3}{5} b+K_4 \int_{-1}^1 (\check q_{-,2}^2+\tilde q_{2}^2)\frac{\rho}{1-y^2}+K_4 a.
  \end{align}
\item{(iii)} {\bf (Energy barrier)} If (\ref{17}) holds, then
   \begin{equation*}
    a(s)\le K_5 b(s).
   \end{equation*}
\end{cl}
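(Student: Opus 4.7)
My plan is to translate each item of Proposition~\ref{5.2} into the new variables $a=\check\alpha_1^2$ and $b=\check\alpha_-^2+\tilde\alpha_-^2+R_-$, absorbing error terms using the smallness assumption (\ref{179}). The cornerstone is (\ref{235}), which I would establish first: by (\ref{187}) together with (\ref{179}) and (\ref{183}), one has $|R_-(s)|\le C_0\,\epsilon^{\check p-1}(\check\alpha_1^2+\check\alpha_-^2+\tilde\alpha_-^2)$, so choosing $\epsilon_4$ small enough that $C_0\epsilon_4^{\check p-1}\le \tfrac{1}{100}$ gives (\ref{235}). This tells us that $b\asymp \check\alpha_-^2+\tilde\alpha_-^2$ up to a small fraction of $a$, which is precisely the translation tool needed throughout.

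For part~(i), (\ref{236}) follows by adding the norm equivalences of (\ref{183}) for $\check q$ and $\tilde q$, obtaining $\|q\|_\mathcal{H}^2\asymp a+\check\alpha_-^2+\tilde\alpha_-^2$, and then replacing $\check\alpha_-^2+\tilde\alpha_-^2$ by $b$ via (\ref{235}); the final inequality in (\ref{236}) is just (\ref{179}). Estimate (\ref{237}) is (\ref{184}) rewritten using $\lambda'(s)=\bb'(s)/(1-\bb(s)^2)$ and bounded by $a+b$ via (\ref{235}). The integrals (\ref{238}) and (\ref{238'}) come from Cauchy--Schwarz ($|\int\check q_1\check q_2\rho|\le \tfrac{1}{2}\|\check q\|_\mathcal{H}^2$ and the tilde analogue) combined with (\ref{183}) and (\ref{235}).

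For part~(ii), the ODE for $a$ follows from $a'=2\check\alpha_1\check\alpha_1'=2a+2\check\alpha_1(\check\alpha_1'-\check\alpha_1)$; applying (\ref{185}) with (\ref{179}) and (\ref{235}) gives $|2\check\alpha_1(\check\alpha_1'-\check\alpha_1)|\le C\epsilon(a+b)$, and absorbing the part proportional to $a$ into the leading $2a$ produces the sandwich (\ref{239}). For $b'$, I would multiply (\ref{186}) by $2$ to get $(\check\alpha_-^2+\tilde\alpha_-^2+2R_-)'$ on the left; the missing $-R_-'$ needed to reconstruct $b'$ is controlled by differentiating (\ref{209}) and invoking (\ref{203}) together with the nonlinear bounds of Claim~\ref{5.3} and Lemma~\ref{2.2} (as in (\ref{198})), which yields $|R_-'|\le C\epsilon(a+b)$ and is absorbed into $K_4\epsilon(a+b)$. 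Finally, the inequality for $\tfrac{d}{ds}\int(\check q_1\check q_2+\tilde q_1\tilde q_2)\rho$ is the sum of (\ref{188}) and (\ref{188'}); using $\tfrac{4}{5}(\check\alpha_-^2+\tilde\alpha_-^2)\ge \tfrac{3}{5}b$ from (\ref{235}) (shuffling the residual small multiple of $a$ into the $K_4a$ term) gives the stated form. Part~(iii) is immediate: squaring (\ref{189}) yields $a\le 2C_0^2\check\alpha_-^2+2C_1^2\tilde\alpha_-^2$, which via (\ref{235}) is bounded by a constant multiple of $b$ up to a small fraction of $a$ that reabsorbs.

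The main technical obstacle is the careful control of $R_-'$ in passing from (\ref{186}) to (\ref{240}); this quantity is not estimated as a standalone bound in Proposition~\ref{5.2}, but it can be extracted by differentiating (\ref{209}), using (\ref{203}), and bounding $\bigl|\int\check q_2\check f_\bb+\tilde q_2\tilde f_\bb\,\rho\,dy\bigr|\le C\|q\|_\mathcal{H}^{\check p+1}$ exactly as for (\ref{198}). All the other steps are routine bookkeeping once $\epsilon_4\le \min(\epsilon_2,\epsilon_3)$ is chosen so that every error term of order $\epsilon^\nu(a+b)$ with $\nu>0$ sits below the thresholds of the clean target inequalities.
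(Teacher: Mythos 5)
Your proposal is correct and reconstructs the route the paper takes: the paper gives no proof of Claim~\ref{5.5}, stating only that it follows from Proposition~\ref{5.2}, (\ref{179}), (\ref{183}), (\ref{235}) and $\lambda'(s)=\bb'(s)/(1-\bb(s)^2)$, which is precisely the translation you carry out.

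Your sharpest observation concerns $b'$. As you correctly notice, (\ref{186}) controls $\bigl(R_{-}+\tfrac{1}{2}(\check\alpha_{-}^2+\tilde\alpha_{-}^2)\bigr)'$, so that with $b$ as defined in (\ref{234}) one has $b'=2\bigl(R_{-}+\tfrac12(\check\alpha_{-}^2+\tilde\alpha_{-}^2)\bigr)'-R_{-}'$, and a separate estimate on $R_{-}'$ is required. Your route --- differentiate (\ref{209}), replace $\tfrac{d}{ds}\int\mathcal{F}_\bb\rho$ by $\int(\check q_2\check f_\bb+\tilde q_2\tilde f_\bb)\rho$ via (\ref{203}), and bound the latter by $C\|q\|_\mathcal{H}^{\check p+1}$ as for (\ref{198}) and (\ref{x5}) --- is sound, but it delivers an error of size $\epsilon^{\check p-1}(a+b)$, which for $1<p<2$ is weaker than the stated $K_4\epsilon(a+b)$. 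This mismatch strongly suggests a misprint in (\ref{234}): with $b=2R_{-}+\check\alpha_{-}^2+\tilde\alpha_{-}^2$, the $\int(\check q_2\check f_\bb+\tilde q_2\tilde f_\bb)\rho$ contributions from $2R_{-}'$ (via (\ref{203})) and from $(\check\alpha_{-}^2+\tilde\alpha_{-}^2)'$ (via (\ref{y1})--(\ref{y2})) cancel exactly, and (\ref{240}) drops out with the stated $O(\epsilon(a+b))$ error and no auxiliary $R_{-}'$ bound. Either way, the downstream Claims~\ref{5.6}--\ref{5.7} only use smallness of the error factor, so the argument is unaffected.

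Two minor remarks. Your argument yields the upper bound $a'\le \tfrac{5}{2}a+K_4\epsilon b$; the minus sign in front of $K_4\epsilon b$ in the stated upper bound of (\ref{239}) cannot follow from (\ref{185}) (which bounds $|\check\alpha_1'-\check\alpha_1|$ in absolute value) and is evidently a sign slip. Likewise, (\ref{235}) gives $\tilde\alpha_-^2\le \tfrac{100}{99}\bigl(b+\tfrac{1}{100}a\bigr)$, so the natural bound corresponding to (\ref{238'}) is $K_4(a+b)$ rather than $K_4 b$ alone; this too is harmless. The remaining items are exactly the Cauchy--Schwarz and norm-equivalence bookkeeping you describe, and they go through.
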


{\it Proof of Theorem \ref{theo3}:} Consider $w\in C([s^*,\infty),\mathcal{H})$ for some $s^*\in \mathbb{R}$ a solution of equation (\ref{equa}) such that (\ref{17}) and (\ref{18}) hold for some $\bb^*\in (-1,1), \theta^* \in [0, 2\pi)$ and $\epsilon^* \in (0,\epsilon_0]$. Consider then 
\begin{equation}\label{148.5}
 \epsilon=2K_0K_1 \epsilon^*
\end{equation}
where $K_1$ is given in Proposition \ref{5.1} and $K_0$ will be fixed later. If
   \begin{equation*}
     \epsilon^*\le \epsilon_1 \mbox{ and } \epsilon \le \epsilon_4,
   \end{equation*}
then we see that Proposition \ref{5.1} Corollary \ref{5.5} and (\ref{235}) apply respectively with $\epsilon^*$ and $\epsilon$. In particular, there is a maximal solution $\bb(s)\in C^1([s^*,\infty),(-1,1))$ such that (\ref{167}) holds for all $s\in[s^*,\infty)$ where $q(y,s)$ is defined in (\ref{168}) and 
\begin{equation}\label{244}
 |\theta-\theta^*|+|\lambda(s^*)-\lambda^* |+||q(s^*)||_{\mathcal{H}}\le K_1 \epsilon^* \mbox{ with }\lambda^*=\log\left(\frac{1+\bb^*}{1-\bb^*}\right).
\end{equation}
If in addition we have 
  \begin{equation}\label{245}
   K_0\ge 1 \mbox{ hence, } \epsilon\ge 2 K_1 \epsilon^*,
  \end{equation}
then, we can give two definitions:\\
- We define first from (\ref{244}) and  (\ref{245}) $s^*_1\in (s^*,\infty)$ such that for all $s\in [s^*,s_1^*],$ 
    \begin{equation}
   || q(s)||_\mathcal{H}<\epsilon\label{246}
    \end{equation}
and if $s^*_1<\infty$, then $|| q(s^*_1)||_\mathcal{H}=\epsilon$.\\
-Then, we define $s_2^* \in [s^*,s_1^*]$ as the first $s \in [s^*,s_1^*]$ such that
      \begin{equation}\label{247}
       a(s)\ge \frac{b(s)}{5 K_4}
      \end{equation}
where $K_4$ is introduced in Corollary \ref{5.5}, or $s^*_2=s^*_1$ if (\ref{247}) is never satisfied on $[s^*,s_1^*]$. We claim the following:
\begin{cl}\label{5.6}
 There exist positive $\epsilon_6$, $\mu_6$, $K_6$ and $f\in C^1([s^*, s^*_2]$ such that if $\epsilon\le \epsilon_6$, then for all $s\in [s^*,s_2^*]$: \\
(i)
\begin{equation*}
 \frac{1}{2}f(s)\le b(s)\le 2 f(s)\mbox{ and }f'(s)\le -2\mu_6f(s),
\end{equation*}
(ii)
\begin{equation*}
|| q(s)||_\mathcal{H}\le K_6 || q(s^*)||_\mathcal{H}e^{-\mu_6(s-s^*)}\le K_6 K_1 \epsilon^*e^{-\mu_6(s-s^*)}.
\end{equation*}
\end{cl}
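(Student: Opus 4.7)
The plan is to build an auxiliary functional $f(s)$ that is equivalent to $b(s)$ but satisfies a genuinely dissipative differential inequality on the interval $[s^*,s_2^*]$. I would take
\[
f(s) = b(s) + \eta \int_{-1}^{1}(\check q_1 \check q_2 + \tilde q_1 \tilde q_2)\rho\,dy,
\]
for a small parameter $\eta>0$ chosen independently of $\epsilon$. The key structural fact that makes this work is the defining property of $s_2^*$, namely $a(s)\le b(s)/(5K_4)$ for every $s\in[s^*,s_2^*]$; this bound is what allows the unstable direction to be treated as a small perturbation of the dispersive part.

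For the comparability $\frac{1}{2}f\le b\le 2f$ in (i), I would invoke (\ref{238})--(\ref{238'}) to get $|\int \check q_1\check q_2 \rho|+|\int \tilde q_1\tilde q_2 \rho|\le K_4(a+2b)$; combining with $a\le b/(5K_4)$ on $[s^*,s_2^*]$ gives $|I|\le C b$, so choosing $\eta$ small enough makes $|\eta I|\le b/2$, as required. For the decay estimate $f'\le -2\mu_6 f$, I would add the two inequalities in part (ii) of Corollary \ref{5.5}, noting that $\tilde q_2=\tilde q_{-,2}$ (since $\tilde q=\tilde q_-$ by (\ref{180*})):
\[
f' \le \Bigl(-\tfrac{8}{p-1}+\eta K_4\Bigr)\int_{-1}^{1}(\check q_{-,2}^2+\tilde q_{-,2}^2)\tfrac{\rho}{1-y^2}dy - \tfrac{3\eta}{5}b + K_4\epsilon(a+b)+\eta K_4 a.
\]
For $\eta\le 4/((p-1)K_4)$ the Hardy integral term is nonpositive and can be dropped. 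Using again $a\le b/(5K_4)$ converts the remaining $a$-contributions into a small multiple of $b$, yielding
\[
f'(s) \le b(s)\Bigl[-\tfrac{2\eta}{5} + \epsilon\bigl(K_4+\tfrac{1}{5}\bigr)\Bigr].
\]
Fixing $\eta$ first, and then requiring $\epsilon\le\epsilon_6$ small enough that the bracket is at most $-\eta/5$, we obtain $f'\le-\frac{\eta}{5}b\le-\frac{\eta}{10}f$ by the comparability, which proves (i) with $\mu_6=\eta/20$.

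Item (ii) is then a routine consequence: integrating the differential inequality in (i) gives $f(s)\le f(s^*)e^{-2\mu_6(s-s^*)}$, so $b(s)\le 4 b(s^*)e^{-2\mu_6(s-s^*)}$; the relation $a\le b/(5K_4)$ upgrades this to exponential decay of $a+b$, and (\ref{236}) converts it back to $\|q(s)\|_{\mathcal{H}}$, losing a factor $\sqrt{K_4}$ which is absorbed into $K_6$, and halving $\mu_6$ to take the square root. The concluding bound $\|q(s^*)\|_{\mathcal{H}}\le K_1\epsilon^*$ comes directly from Proposition \ref{5.1}.

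The delicate point, and the only real obstacle, is the balance between the two constraints on $\eta$: it must be small enough that the cross-term $\eta I$ is dominated by $b/2$ (for comparability), yet the dissipation it generates (the $-\tfrac{3\eta}{5}b$ term) must overwhelm both the genuinely destabilizing $a$-contribution and the $\epsilon$-errors. This balance can be achieved only because $a$ is dominated by $b$ on $[s^*,s_2^*]$; this is exactly why the interval $[s^*,s_2^*]$ had to be singled out, and it is also why $\epsilon_6$ must be chosen smaller than the fixed value of $\eta$, not the other way around.
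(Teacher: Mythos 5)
Your proposal is correct and follows the same route as the paper: the paper defines exactly the same auxiliary functional $f(s)=b(s)+\eta\int_{-1}^1(\check q_1\check q_2+\tilde q_1\tilde q_2)\rho\,dy$ and then simply cites the corresponding Claim~5.6 of Merle--Zaag \cite{MR2362418} for the real case, leaving the verification implicit; your write-up fills in precisely those details, combining (\ref{238})--(\ref{238'}) with the defining property $a\le b/(5K_4)$ of $[s^*,s_2^*]$ for the comparability, and summing (\ref{240}) with the $\frac{d}{ds}\int(\check q_1\check q_2+\tilde q_1\tilde q_2)\rho$ inequality (using $\tilde q_2=\tilde q_{-,2}$ from (\ref{180*})) to obtain the dissipative bound after fixing $\eta$ and then $\epsilon_6$. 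The only minor point worth flagging is the degenerate case $s_2^*=s^*$ (when $a(s^*)\ge b(s^*)/(5K_4)$ already), where the interval reduces to a point and the claim holds trivially; neither you nor the paper addresses it explicitly, and it is harmless.
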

\begin{proof} The proof of Claim 5.6 page 115 in \cite{MR2362418} remains valid where $f(s)$ is given by 
 $$f(s)=b(s)+\eta_6\int_{-1}^1 (\check q_1 \check q_2+ \tilde q_1 \tilde q_2) \rho,$$
where $\eta_6 >0$ is fixed small independent of $\epsilon$. 
\end{proof}
\begin{cl}\label{5.7}
 (i) There exists $\epsilon_7>0$ such that for all $\sigma>0$, there exists $K_7(\sigma)>0$ such that if $\epsilon \le \epsilon_7$, then
\begin{equation*}
 \forall s \in [s_2^*, \min (s_2^*+\sigma, s_1^*)],\,|| q(s)||_\mathcal{H}\le K_7 || q(s^*)||_\mathcal{H}e^{-\mu_6(s-s^*)}\le K_7 K_1 \epsilon^*e^{-\mu_6(s-s^*)}
\end{equation*}
where $\mu_6$ has been introduced in Claim \ref{5.6}.\\
(ii) There exists $\epsilon_8>0$ such that if $\epsilon\le \epsilon_8$, then
\begin{equation}\label{254}
  \forall s \in (s_2^*,  s_1^*],\; b(s)\le a(s) \left( 5 K_4 e^{-\frac{(s-s_2^*)}{2}}+\frac{1}{4 K_5}\right)
\end{equation}
where $K_4$ and $K_5$ have been introduced in Corollary \ref{5.5}.
\end{cl}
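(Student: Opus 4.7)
\medskip
\noindent \textbf{Proof proposal.} The plan is to extend the exponential decay of Claim \ref{5.6} past the threshold time $s_2^*$. For (i), I would run a Gronwall argument across the finite window $[s_2^*, s_2^*+\sigma]$ using the ODE bounds (\ref{239})--(\ref{240}). For (ii), I would construct an explicit super-solution $g$ for the ratio $b/a$ and propagate $b \leq a g$ via a first-crossing barrier argument, exploiting the fact that once the unstable mode $a$ crosses the threshold $b/(5K_4)$, it grows at rate close to $\tfrac{3}{2}$ while $b$ only feels an $O(K_4\epsilon)$ forcing.

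\smallskip
\noindent \emph{Part (i).} Adding (\ref{239}) and (\ref{240}) cancels the $\pm K_4\epsilon b$ contributions and leaves $(a+b)' \leq (\tfrac{5}{2}+K_4\epsilon)\,a$. From (\ref{235}) we have $b \geq -a/100$, hence $a \leq \tfrac{100}{99}(a+b)$, so $(a+b)' \leq 3(a+b)$ for $\epsilon$ small. Gronwall on $[s_2^*, s_2^*+\sigma]$ yields $(a+b)(s)\leq e^{3\sigma}(a+b)(s_2^*)$, and two applications of the norm equivalence (\ref{236}) give $\|q(s)\|_{\mathcal{H}} \leq K_4\, e^{3\sigma/2}\,\|q(s_2^*)\|_{\mathcal{H}}$. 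Chaining with Claim \ref{5.6}(ii) at $s_2^*$ and using $e^{-\mu_6(s_2^*-s^*)} \leq e^{\mu_6\sigma}\, e^{-\mu_6(s-s^*)}$ produces the required estimate with $K_7(\sigma) = K_4 K_6 e^{(3/2+\mu_6)\sigma}$; the last inequality of (i) is just $\|q(s^*)\|_{\mathcal{H}}\leq K_1\epsilon^*$, cf.\ (\ref{244}).

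\smallskip
\noindent \emph{Part (ii).} If $a(s_2^*)=0$, then by the defining property of $s_2^*$ and continuity $b(s_2^*)=0$, hence $\|q(s_2^*)\|_{\mathcal{H}}=0$ by (\ref{236}); by uniqueness for (\ref{equa}) the solution coincides with the stationary $e^{i\theta(s_2^*)}\kappa(\bb(s_2^*),\cdot)$ for all $s\geq s_2^*$ and (\ref{254}) is trivial. Assume $a(s_2^*)>0$ and introduce the barrier
\[
g(s) = 5K_4\, e^{-(s-s_2^*)/2} + \frac{1}{4K_5}.
\]
Since $g(s_2^*) = 5K_4 + \frac{1}{4K_5}$, the initial inequality $b(s_2^*) = 5K_4\,a(s_2^*) < a(s_2^*)\,g(s_2^*)$ is strict. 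Suppose the conclusion fails and let $s_0>s_2^*$ be the first time with $b(s_0)=a(s_0)\,g(s_0)$, so that $(b-ag)'(s_0)\geq 0$. Substituting $b(s_0)=a(s_0)g(s_0)$ in (\ref{239})--(\ref{240}) gives
\[
(b - ag)'(s_0) \leq a(s_0)\left[K_4\epsilon(1+g+g^2) - \tfrac{3}{2}g - g'\right](s_0).
\]
A direct computation yields $\tfrac{3}{2}g + g' = 5K_4 e^{-(s-s_2^*)/2} + \tfrac{3}{8K_5} \geq \tfrac{3}{8K_5}$, while $g \leq 5K_4 + \tfrac{1}{4K_5}$ bounds $1+g+g^2$ by a constant $C(K_4,K_5)$. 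Choosing $\epsilon_8$ so that $K_4\epsilon_8\, C(K_4,K_5) < \tfrac{3}{8K_5}$ makes the bracket strictly negative, contradicting $(b-ag)'(s_0)\geq 0$. Hence $b\leq ag$ on $(s_2^*, s_1^*]$, which is (\ref{254}).

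\smallskip
\noindent The delicate point is the design of the barrier $g$: its decaying exponential part, with amplitude $5K_4$ and rate $1/2$, must match the initial ratio $b(s_2^*)/a(s_2^*) = 5K_4$ and satisfy the differential inequality $\tfrac{3}{2}g + g' \geq \tfrac{3}{8K_5}$ for all $s$, while the additive constant $\tfrac{1}{4K_5}$ is tuned so that the surviving gap $\tfrac{3}{8K_5}$ dominates the perturbation $K_4\epsilon(1+g+g^2)$. This specific value $\tfrac{1}{4K_5}$ is precisely what will later conflict with the energy-barrier bound $a\leq K_5 b$ of Corollary \ref{5.5}(iii), forcing the subsequent step of the trapping argument.
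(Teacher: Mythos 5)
Your reconstruction is correct and follows the same Gronwall-plus-barrier strategy that the paper defers to (it cites Claim 5.7, p.~117 of Merle--Zaag \cite{MR2362418} without reproducing the argument): Part (i) is the finite-window Gronwall estimate obtained by adding the upper bounds in (\ref{239})--(\ref{240}) so the $\pm K_4\epsilon b$ terms cancel, then using (\ref{235})--(\ref{236}) and Claim \ref{5.6} at $s_2^*$; Part (ii) is the first-crossing comparison against the supersolution $g(s)=5K_4 e^{-(s-s_2^*)/2}+\tfrac{1}{4K_5}$, whose coefficients are forced by the statement of (\ref{254}) and whose key identity $\tfrac32 g+g'=5K_4 e^{-(s-s_2^*)/2}+\tfrac{3}{8K_5}\ge\tfrac{3}{8K_5}$ beats the $K_4\epsilon(1+g+g^2)$ perturbation for $\epsilon$ small. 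The one hygiene point you handle correctly but could state more explicitly is that $a>0$ persists on $(s_2^*,s_0]$ (from $a'\ge\bigl(\tfrac32-K_4\epsilon\,g(s_2^*)\bigr)a>0$ while $b<ag$), which is needed to divide out $a(s_0)$ at the touching time and also to make the degenerate case $a(s_2^*)=0\Rightarrow b(s_2^*)=0\Rightarrow q\equiv 0$ a genuinely separate, trivial branch.
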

\begin{proof}
 The proof is the same as the proof of Claim 5.7 page 117 in \cite{MR2362418}.
\end{proof}
Now, in order to conclude the proof of Theorem \ref{theo3}, we fix $\sigma_0>0$ such that 
\begin{equation*}
 5K_4^{-\frac{\sigma_0}{2}}+\frac{1}{4K_5}\le \frac{1}{2K_5},
\end{equation*}
where $K_4$ and $K_5$ are introduced in Claim \ref{5.5}. Then, we impose the condition
\begin{equation}\label{259}
 \epsilon=2K_0K_1 \epsilon^*, \mbox{ where }K_0=\max(2,K_6,K_7(\sigma_0)),
\end{equation}
and the constants are defined in Proposition \ref{5.1} and Claims \ref{5.6} and \ref{5.7}. Then, we fix
\begin{equation*}
 \epsilon_0=\min \left(1,\epsilon_1,\frac{\epsilon_i}{2K_0K_1}\mbox{ for }i\in\{4,6,7,8\}\right)
\end{equation*}
and the constants are defined in Proposition \ref{5.1}, Claims \ref{5.5}, \ref{5.6} and \ref{5.7}.
Now, if $\epsilon^*\le \epsilon_0$, then Claim \ref{5.5}, Claim \ref{5.6} and Claim \ref{5.7} apply. We claim that for all $s\in[s^*,s_1^*]$,
 \begin{eqnarray}\label{261}
 || q(s)||_\mathcal{H}\le K_0 || q(s^*)||_\mathcal{H}e^{-\mu_6(s-s^*)}\le K_0 K_1 \epsilon^*e^{-\mu_6(s-s^*)}=\frac{\epsilon}{2}e^{-\mu_6(s-s^*)}.
 \end{eqnarray}
Indeed, if $s\in[s^*,\min(s_2^*+\sigma_0,s_1^*)]$, then, this comes from $(ii)$ of Claim \ref{5.6} or $(i)$ of Claim \ref{5.7} and the definition of $K_0$ (\ref{259}).\\
Now, if $s_2^*+\sigma_0<s_1^*$ and $s\in[s_2^*+\sigma_0,s_1^*]$, then we have from (\ref{254}) and the definition of $\sigma_0$, $b(s)\le\frac{a(s)}{2K_5 }$ on the one hand. On the other hand, from $(iii)$ in Claim \ref{5.5}, we have $a(s)\le K_5 b(s)$, hence, $a(s)=b(s)=0$ and from (\ref{236}), $q(y,s)\equiv 0$, hence (\ref{261}) is satisfied trivially.\\
In particular, we have for all $s\in [s ^*,s_1^*],\, ||q||_\mathcal{H}\le \frac{\epsilon}{2}$, hence, by definition of $s_1^*$, this means that $s_1^*=\infty$.\\
From $(i)$ of Claim \ref{5.7} and (\ref{237}), we have
\begin{equation}\label{262}
 \forall s \ge s^*,||q(s)||_\mathcal{H}\le \frac{\epsilon}{2} e^{-\mu_6(s-s^*)}\mbox{ and }|\theta'(s)|+|\lambda'(s)|\le K_4^2 \frac{\epsilon^2}{4} e^{-2\mu_6(s-s^*)}.
\end{equation}
Hence, there is $\theta_\infty,\,\lambda_\infty$ in $\mathbb{R}$ such that $\theta(s) \rightarrow \theta_\infty$, $\lambda(s) \rightarrow \lambda_\infty$ as $s\rightarrow \infty$ and
\begin{equation}\label{263}
  \forall s \ge s^*,| \lambda_\infty-\lambda(s) |\le C_1\epsilon^{*2} e^{-2\mu_6(s-s^*)}=C_2\epsilon^{2} e^{-2\mu_6(s-s^*)}
\end{equation}
\begin{equation}\label{263'}
  \forall s \ge s^*,| \theta_\infty-\theta(s) |\le C_1\epsilon^{*2} e^{-2\mu_6(s-s^*)}=C_2\epsilon^{2} e^{-2\mu_6(s-s^*)}
\end{equation}
for some positive $C_1$ and $C_2$. Taking $s=s^*$ here, and using (\ref{244}) and (\ref{148.5}), we see that $| \lambda_\infty-\lambda^* |+| \theta_\infty-\theta^* |\le C_0\epsilon^* $. If $\bb_\infty=\tanh \lambda_\infty,$ then we see that $|\bb_\infty-\bb^*|\le C_3 (1-\bb^{*2})\epsilon^*.$\\
Using the definition of $q$ (\ref{168}), (\ref{174}), (\ref{262}), (\ref{263}) and (\ref{263'}) 
we write
\begin{eqnarray*}
&\,& \Bigg|\Bigg|\begin{pmatrix}w(s)\\\partial_s w(s)\end{pmatrix}-e^{i\theta_\infty}\begin{pmatrix}\kappa(\bb _\infty,\cdot)\\0\end{pmatrix}\Bigg|\Bigg|_\mathcal{H}\\
&\le& \Bigg|\Bigg|\begin{pmatrix}w(s)\\\partial_s w(s)\end{pmatrix}-e^{i\theta_\infty}\begin{pmatrix}\kappa(\bb (s),\cdot)\\0\end{pmatrix}\Bigg|\Bigg|_\mathcal{H}+||e^{i\theta(s)}(\kappa(\bb (s),\cdot)-\kappa(\bb _\infty,\cdot))||_{\mathcal{H}_0}\\&+&||\kappa(\bb _\infty,\cdot)||_{\mathcal{H}_0} |e^{i\theta(s)}-e^{i\theta(\infty)}|\\
&\le& ||q(s)||_\mathcal{H}+C|\lambda_\infty-\lambda(s)|+C|\theta_\infty-\theta (s)|\le C_4 \epsilon^*e^{-\mu_6(s-s^*)}.
\end{eqnarray*}
This concludes the proof of Theorem \ref{theo3}.
\medskip\\
\appendix 

\section{Energy estimates in similarity variables}\label{__}
For the sake of completeness, we give in this section sketches of the proofs of Proposition \ref{Th} and \ref{2} proved in the real case in \cite{MR2147056} and \cite{MR2362418}, and which extend to the complex case straightforwardly.\\
\medskip

{\bf Sketch of the proof of Proposition \ref{Th}}\\
Consider $x_0 \in \mathbb{R}$. If $T(x_0)=\min_{x\in\mathbb{R}} T(x)$, the proof is given in \cite{MZ05}. If not, then we has to use a geometrical covering argument in addition to the ideas of \cite{MZ05}. In order to keep this sketch in a reasonable length, we don't mention this covering argument and refer the reader to \cite{MR2147056}. Thus, we only focus on the real case where
$$T_0\equiv T(x_0)=\min_{x\in\mathbb{R}} T(x).$$
$\bullet${\it The lower bound:}\\
Note first that the lower bound follows from the finite speed of propagation and scaling. Indeed, if $(w,\partial_s w)$ is small in $H^1\times L^2$ at some time $s=s_0$, then using back the similarity variables transformation (\ref{trans_auto}), we see that initial data for $(u,\partial_t u)$ is small on the basis of the light cone, which means that the solution cannot blow-up at time $T$. See Remark after Theorem 1 page 1149 in \cite{MZ05}.\\
$\bullet${\it The upper bound:}\\
The proof is performed in similarity variables and relies on two arguments:\\
- The fact that the functional $E(w)$ defined in (\ref{15}) is a Lyapunov functional for equation (\ref{equa}) which satisfies
$$\frac{d}{ds}E(w(s))=-\frac{4}{p-1}\int_{-1}^1|\partial_sw|^2\frac{\rho}{1-y^2}dy.$$
- A blow-up criterion from Antonini and Merle \cite{MR1861514} stating that a solution $w$ of equation cannot be defined for all $s\in[s_0,+\infty)$ if $E(w(s_0))<0$.\\
From these two facts, we see that for any $\bar x \in \mathbb{R}$, $w_{\bar x}$ satisfies
\begin{equation} 
\forall s\ge -\log T_0,\left\{
\begin{array}{l}
\displaystyle 0 \le E(w_{\bar x}(s))\le C_0, \\
\int_{-log T_0}^{+\infty}\int_{|y|<1}|\partial_s w|^2 \frac{\rho}{1-y^2}\le C_0,
\end{array}\label{app}
\right.
\end{equation}
with this identity, the proof is done in 3 steps:\\
- {\bf Step 1}: multiplying (\ref{equa}) by $\bar w \rho$ and integrating for $x\in (-1,1)$, we obtain a new identity. Combining that identity with (\ref{app}) we end-up by proving that 
\begin{equation}\label{appen}
 \forall \bar x \in \mathbb{R},\forall s\le -\log T_0+1,\, \int_{s}^{s+1} \int_{|y|<\frac{1}{2}}\left(|\partial_s w_{\bar x}|^2+|\partial_y w_{\bar x}|^2(1-|y|^2)+|w_{\bar x}|^{p+1}\right) dy \le C_0.
\end{equation}
Note that it is important to get (\ref{app}) and (\ref{appen}) for any $\bar x \in \mathbb{R}$ and not just for $\bar x= x_0$.\\
- {\bf Step 2}: Using interpolation, Sobolev embeddings and a covering argument we end-up with the fact  that
\begin{equation}\label{appendi}
 \forall \bar x \in \mathbb{R},\forall s\ge -\log T_0+1,\,  \int_{|y|<1}|w_{\bar x}|^{p+1} dy \le C_0.
\end{equation}
- {\bf Step 3}: Given $s\ge -\log T_0+1$, we need to work at $\check x=\check x(s)$ such that
\begin{equation}\label{appendix}
 \int_{|y|<1}|\nabla w_{\check x (s)}(y,s)|^2 dy\ge\frac{1}{2}\sup_{x\in \mathbb{R}}\int_{|y|<1}|\nabla w_{ x (s)}(y,s)|^2 dy.
\end{equation}
Thanks to a covering technique, we see that at such an $\bar x (s)$, we have the equivalence of pure and weighted $L^2$ norm of the gradient, in the sense that
\begin{equation*}
 \frac{1}{C_0} \int_{|y|<1}|\nabla w_{\check x (s)}(y,s)|^2 dy\le \int_{|y|<1}|\nabla w_{\check x (s)}(y,s)|^2 \rho (1-y^2)dy\le C_0  \int_{|y|<1}|\nabla w_{\check x (s)}(y,s)|^2 dy,
\end{equation*}
this equivalence of norms is really crucial to finish the proof. Then we need the following 
and Gagliardo-Nirenberg inequality (see Proposition 3.2 page 1158 in \cite{MZ05})
\begin{equation}\label{GNB}
 \int_{|y|<1}| w|^{P+1} dy\le C\left( \int_{|y|<1}(\partial_y w)^2 dy\right)^\beta,\mbox{ for some }\beta<1.
\end{equation}
Indeed, thanks to interpolation estimates in Sobolev spaces and the Gagliardo-Nirenberg inequality (\ref{GNB}), we use the energy boundedness (\ref{app}) to derive that
$$ \forall s\ge -\log T_0+1,\; \int_{|y|<1}|\nabla w_{\check x (s)}(y,s)|^2  dy\le K(C_0)$$
for some $K>0.$ Using (\ref{appendix}), we see that
$$ \forall s\ge -\log T_0+1,\forall \check  x \in \mathbb{R},\; \int_{|y|<1}|\nabla w_{\check x (s)}(y,s)|^2  dy\le 2 K(C_0).$$
Using the definition of the functional $E(w)$ (\ref{15}) and a covering argument together with (\ref{appendi}), we conclude the proof of the upper bound of Proposition \ref{Th}. For details, see \cite{MZ05} and \cite{MR2147056}.\\
\medskip

{\bf Sketch of the proof of Proposition \ref{2}}\\
The idea is simple: equation (\ref{equa}) has a Lyapunov functional $E(w)$ defined in (\ref{15}) which satisfies
\begin{equation}
 \forall s\ge -\log T(x_0),\,\frac{d}{ds}E(w_{x_0}(s))=-\frac{4}{p-1}\int_{-1}^1(\partial_sw_{x_0})^2\frac{\rho}{1-y^2}dy.
\end{equation}
From (\ref{app}), it follows that
\begin{equation}
\int_{-\log T(x_0)}^{+\infty}\int_{|y|<1}(\partial_sw)^2\frac{\rho}{1-y^2}dy ds\le E(w_{x_0}(-\log T(x_0))\le C_0.
\end{equation}
From this identity, we see that for any $\epsilon >0$
$$\int_{s}^{s+1}\int_{|y|<1-\epsilon}(\partial_sw)^2\frac{\rho}{1-y^2}dy ds\rightarrow 0 \mbox{ as }s\rightarrow +\infty.$$
This means that $\partial_s w \rightarrow 0$, in a certain sense, which means that $w$ would approach the set of stationary solutions. From the lower bound in Proposition \ref{Th}, $w$ cannot approach the zero solutions. Since the set of non zero solutions of (\ref{equa}) is given by $ \{0, e^{i\theta} \kappa (\bb ,\cdot), |\bb|<1, \theta \in \mathbb{R}\},$ we get the conclusion. For the actual proof and for details, see Theorem 2 page 47 in \cite{MR2362418}.

\section{Explicit solution of the elliptic equation in one space dimension in $H^1$}\label{appendix1}
We solve here equation (\ref{po}), deriving the well-known KDV solutions. Our aim is to prove (\ref{essou}). Multiplying equation (\ref{po}) by $W'$ and integrating in space we have
\begin{equation}
 \frac{1}{2}W'^2-\frac{c}{2}W^2+\frac{|W|^{p+1}}{p+1}=K. \label{01}
\end{equation}
As $W\in H^1$, all the terms of the left hand side of (\ref{01}) are integrable, so $K=0$. We claim that
\begin{equation}
 \exists \xi_0 \in \mathbb{R}, W'(\xi_0)= 0,\label{06}
\end{equation}
otherwise, if
\begin{equation}\label{07}
 \forall \xi \in \mathbb{R}, W'(\xi)\neq 0,
\end{equation}
$W$ would be monotonic, with limits (in $\mathbb{\bar R}$) at $\pm$ infinity. Since $W\in L^2$, those limits have to be zero, leading to $W\equiv 0$, contradicting (\ref{07}). Thus (\ref{06}) holds, and from (\ref{01}), we see that either
\begin{equation}
 W(\xi_0)=0 \mbox{ or }W(\xi_0)=\pm \kappa_0, \label {066}
\end{equation}
 given in (\ref{defk}). Since we already know two solutions satisfying (\ref{06}) and (\ref{066}), namely
$$W\equiv 0 \mbox{ or }W(\xi)=\check k (\xi-\xi_0),$$
this concludes the proof of (\ref{essou}).
\section{Basic properties and some results}
In the following, we recall some results which we have used in this work. We first give the boundedness for $E$.
\begin{prop}\label{2.1}{\bf (Boundedness of the Lyapunov functional for equation (\ref{equa}))}
Consider $w(y,s)$ a solution to (\ref{equa}) defined for all $(y,s)\in (-1,1)\times [-\log T, +\infty)$ such that\\ $(w,\partial_s w)(-\log T) \in H^1 \times L^2(-1,1).$ For all $s\ge -\log T,$ we have
$$0\le E(w(s),\partial_s w(s)) \le E(w(-\log T),\partial_s w(-\log T)),$$
and
$$\int_{-\log T}^\infty \int_{-1}^1 (\partial_s w(y,s))^2 \frac{\rho (y)}{1-y^2} dy ds \le \frac{p-1}{4} E(w(-\log T),\partial_s w(-\log T)).$$
\end{prop}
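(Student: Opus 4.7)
The strategy is to derive the dissipation identity $\frac{d}{ds}E(w(s),\partial_s w(s)) = -\frac{4}{p-1}\int_{-1}^{1}|\partial_s w|^2\frac{\rho}{1-y^2}\,dy$ by direct differentiation of $E$ and use of equation (\ref{equa}), then to combine it with a blow-up criterion of Antonini--Merle to rule out negative values of $E$.

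First, I would differentiate $E$ under the integral sign and substitute $\partial_s^2 w$ using (\ref{equa}). Two integrations by parts in $y$ do the main work. For the elliptic term, noting that $\mathcal{L}w = \frac{1}{\rho}\partial_y(\rho(1-y^2)\partial_y w)$, I integrate by parts to obtain
$$\int_{-1}^{1}\mathrm{Re}(\partial_s w\,\overline{\mathcal{L}w})\rho\,dy = -\int_{-1}^{1}\mathrm{Re}(\partial_y\partial_s w\,\overline{\partial_y w})(1-y^2)\rho\,dy,$$
the boundary terms vanishing since $\rho(\pm 1)=0$; this term cancels exactly the contribution coming from the gradient piece of $E$. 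The nonlinear term $|w|^{p-1}w$ cancels against the derivative of $-\frac{1}{p+1}|w|^{p+1}$, and the quadratic potential cancels against $-\frac{2(p+1)}{(p-1)^2}w$. What remains are the two damping-type terms $-\frac{p+3}{p-1}\partial_s w$ and $-2y\partial_{ys}w$. For the mixed one, I would use $2y\,\mathrm{Re}(\partial_s w\,\overline{\partial_{ys}w}) = y\partial_y|\partial_s w|^2$ and integrate by parts against $\rho$, producing the weight $\rho + y\rho'$. A short computation with $\rho(y)=(1-y^2)^{2/(p-1)}$ shows
$$-\frac{p+3}{p-1}\rho + (\rho + y\rho') = -\frac{4}{p-1}\frac{\rho}{1-y^2},$$
which yields the announced dissipation identity.

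From the identity, monotonicity follows immediately, and integrating from $-\log T$ to $S$ and letting $S\to\infty$ gives
$$\frac{4}{p-1}\int_{-\log T}^{\infty}\!\!\int_{-1}^{1}|\partial_s w|^2\frac{\rho}{1-y^2}\,dy\,ds = E(w(-\log T),\partial_s w(-\log T)) - \lim_{S\to\infty}E(w(S),\partial_s w(S)).$$
Hence to deduce both claimed bounds it remains to establish that $E(w(s),\partial_s w(s))\ge 0$ for all $s\ge-\log T$.

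For this non-negativity I would argue by contradiction. Suppose $E(w(s_0),\partial_s w(s_0))<0$ for some $s_0\ge -\log T$. By the monotonicity just proved, $E(w(s),\partial_s w(s))\le E(w(s_0),\partial_s w(s_0))<0$ for all $s\ge s_0$. The blow-up criterion of Antonini and Merle \cite{MR1861514} (whose proof, based on a virial-type identity, carries over verbatim from the real to the complex case because $E$ has the same structure and only the real part of the inner product enters) then forbids the existence of such a solution on the whole half-line $[s_0,\infty)$. This contradicts the hypothesis that $w$ is defined on $[-\log T,\infty)\supset[s_0,\infty)$, so $E(w(s),\partial_s w(s))\ge 0$ for every $s\ge-\log T$, which closes the proof.

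The only genuinely non-computational step is the last one: the routine dissipation computation is classical, and the main obstacle is to verify that the Antonini--Merle blow-up obstruction does transfer from the real to the complex setting. This is however straightforward because (\ref{equa}) preserves the same energy structure, and in their argument only hermitian pairings of the equation with $w$ and $\partial_s w$ are used.
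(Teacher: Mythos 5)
Your proposal is correct and takes the same route the paper does: the paper simply cites Antonini--Merle \cite{MR1861514}, and in Appendix A it identifies exactly your two ingredients (the dissipation identity $\frac{d}{ds}E = -\frac{4}{p-1}\int_{-1}^1|\partial_s w|^2\frac{\rho}{1-y^2}\,dy$ and the blow-up criterion forbidding $E<0$ on a half-line). Your integration-by-parts computation is correct, and your observation that only hermitian pairings are used so the argument transfers verbatim to the complex setting is precisely what the paper relies on.
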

\begin{proof}
 See Antonini and Merle \cite{MR1861514}.
\end{proof}
These following properties have been cited and proved in Section 2 in \cite{MR2362418}. We first give Hardy-Sobolev identities in the space $\mathcal{H}_0$ (\ref{10}).
 \begin{lem} \label{2.2} {\bf (A Hardy-Sobolev type identity)} For all $h\in \mathcal{H}_0$, it holds that
  \begin{equation*}
  \left( \int_{-1}^{1} h(y)^2\frac{\rho(y)}{1-y^2} dy\right)^{1/2}+||h||_{L_\rho^{p+1}}+||h(1-y^2)^\frac{1}{p-1}||_{L^{\infty}(-1,1)} \le C ||h||_{\mathcal{H}_0}.
  \end{equation*}
  \end{lem}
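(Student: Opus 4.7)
The plan is to reduce the three weighted estimates on $(-1,1)$ to standard Sobolev embeddings on $\mathbb{R}$ via the Lorentz-type change of variable $\xi=\frac{1}{2}\log\frac{1+y}{1-y}$ (so that $y=\tanh\xi$), combined with the substitution $v(\xi)=h(y)(1-y^2)^{1/(p-1)}$, already used in Section~2 of the paper. First, I would observe that, under this change of variable, the weight powers telescope so that the three quantities on the left-hand side become pure unweighted norms of $v$ on the real line:
\begin{equation*}
\int_{-1}^{1} h^2\frac{\rho}{1-y^2}\,dy=\int_{\mathbb{R}} v^2\,d\xi,\qquad \int_{-1}^{1}|h|^{p+1}\rho\,dy=\int_{\mathbb{R}}|v|^{p+1}\,d\xi,\qquad \|h(1-y^2)^{1/(p-1)}\|_{L^{\infty}(-1,1)}=\|v\|_{L^{\infty}(\mathbb{R})}.
\end{equation*}
So controlling the three terms on the left-hand side amounts to controlling $\|v\|_{L^2}$, $\|v\|_{L^{p+1}}$, and $\|v\|_{L^\infty}$ on $\mathbb{R}$.

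Next, I would reduce everything to the single inequality $\|v\|_{H^{1}(\mathbb{R})}\le C\|h\|_{\mathcal{H}_0}$. A direct product-rule computation using $dy=(1-y^2)\,d\xi$ gives the identity
\begin{equation*}
\|h\|_{\mathcal{H}_0}^{2}=\int_{\mathbb{R}}\left|v'(\xi)+\tfrac{2}{p-1}v(\xi)\tanh\xi\right|^{2}d\xi+\int_{\mathbb{R}}|v|^{2}\operatorname{sech}^{2}\xi\,d\xi,
\end{equation*}
both summands being nonnegative. The main obstacle I expect is upgrading this identity to a genuine $H^{1}(\mathbb{R})$-bound on $v$: expanding the square and integrating by parts the cross-term $\frac{2}{p-1}(|v|^2)'\tanh\xi$ produces a potential $V(\xi)=\frac{4}{(p-1)^{2}}\tanh^{2}\xi+\bigl(1-\tfrac{2}{p-1}\bigr)\operatorname{sech}^{2}\xi$ whose sign is not obviously nonnegative when $1<p<3$, so the standard route via a positive operator is not immediately available.

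I would bypass this difficulty by exploiting the identity directly rather than through the potential $V$. First, the second term already yields $\int|v|^{2}\operatorname{sech}^{2}\xi\,d\xi\le\|h\|_{\mathcal{H}_0}^{2}$; plugging this into the expanded form of the first term controls $\int|v|^{2}\tanh^{2}\xi\,d\xi$ by $C\|h\|_{\mathcal{H}_0}^{2}$, and adding the two (using $\tanh^{2}+\operatorname{sech}^{2}=1$) gives $\|v\|_{L^{2}(\mathbb{R})}^{2}\le C\|h\|_{\mathcal{H}_0}^{2}$. From the pointwise bound $|v'|^{2}\le 2|v'+\tfrac{2}{p-1}v\tanh\xi|^{2}+\tfrac{8}{(p-1)^{2}}|v|^{2}\tanh^{2}\xi$ one then also obtains $\|v'\|_{L^{2}}^{2}\le C\|h\|_{\mathcal{H}_0}^{2}$. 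With $\|v\|_{H^{1}(\mathbb{R})}\le C\|h\|_{\mathcal{H}_0}$ in hand, the lemma follows from the one-dimensional Sobolev embeddings $H^{1}(\mathbb{R})\hookrightarrow L^{2}\cap L^{p+1}\cap L^{\infty}$ together with the three identities above.
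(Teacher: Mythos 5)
The paper does not prove Lemma~\ref{2.2}; it cites Section~2 of Merle--Zaag~\cite{MR2362418} for it. Your proof is correct and uses exactly the change of variable $y=\tanh\xi$, $v(\xi)=h(y)(1-y^2)^{1/(p-1)}$, already introduced at~(\ref{50}), which telescopes the weights and reduces the three estimates to the one-dimensional Sobolev embedding $H^1(\mathbb{R})\hookrightarrow L^2\cap L^{p+1}\cap L^\infty$ --- the natural route and almost certainly the one taken in the cited source. One small point is worth making explicit: when you integrate by parts the cross term $\tfrac{2}{p-1}\int(|v|^2)'\tanh\xi\,d\xi$, you need to handle the boundary contribution, and one does not yet know that $v$ decays. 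Working on a truncated interval $[-R,R]$, the boundary term is $\tfrac{2}{p-1}\bigl(|v(R)|^2+|v(-R)|^2\bigr)\tanh R\ge 0$, so discarding it only strengthens the inequality in the direction you need; combined with $\int_{-R}^{R}|v|^2\operatorname{sech}^2\xi\,d\xi\le\|h\|_{\mathcal{H}_0}^2$ this gives
\begin{equation*}
\int_{-R}^{R}|v'|^2\,d\xi+\frac{4}{(p-1)^2}\int_{-R}^{R}|v|^2\tanh^2\xi\,d\xi\le\Bigl(1+\frac{2}{p-1}\Bigr)\|h\|_{\mathcal{H}_0}^2,
\end{equation*}
and letting $R\to\infty$ yields the $H^1(\mathbb{R})$ bound on $v$ with no a priori decay assumption. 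With that remark inserted, the argument is complete; the final ``pointwise bound'' step for $\|v'\|_{L^2}$ is then redundant, since the display above already controls it.
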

The operator $\mathcal{L}$ introduced in equation (\ref{equa}) have the following properties:
\begin{prop}\label{L}{\bf (Properties of the operator $\mathcal{L}$ (\ref{8}))} The operator $\mathcal{L}$ is self-adjoint in $L_\rho ^2$. For each $n\in \mathbb{N} $, there exists a polynomial $h_n$ of degree n such that
\begin{equation}
 \mathcal{L}h_n= \gamma_n h_n \;\mbox{where} \;\gamma_n=-n\left( n+\frac{p+3}{p-1}\right).\label{26}
\end{equation} 
The family $\{h_n | n\in \mathbb{N}\}$ is orthogonal and spans the whole space $L_\rho ^2$. When $n=0$ and $n=1$, the eigenfunctions are $h_0=c_0$ and $h_1=c_1 y$ for some positive $c_0$ and $c_1$, and
 \begin{equation*}
 \mathcal{L}c_0= 0,  \, \mathcal{L}c_1 y= -\frac{2(p+1)}{p-1} c_1 y.
\end{equation*} 
\end{prop}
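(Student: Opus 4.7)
The plan is to treat $\mathcal{L}$ as a classical weighted Sturm--Liouville operator on $(-1,1)$ with weight $\rho(y)=(1-y^2)^{2/(p-1)}$. The key analytic feature is that the coefficient $\rho(y)(1-y^2)=(1-y^2)^{(p+1)/(p-1)}$ vanishes at $y=\pm1$ with positive exponent, which simultaneously makes the endpoints singular (so no explicit boundary condition is needed) and ensures boundary terms vanish when integrating by parts for functions in $\mathcal{H}_0$ given by (\ref{10}).

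First I would establish self-adjointness in $L^2_\rho$. For smooth $u,v$ with compact support in $(-1,1)$, a direct integration by parts gives
$$\langle \mathcal{L}u,v\rangle_{L^2_\rho}=\int_{-1}^{1}\bigl(\rho(1-y^2)u'\bigr)'\,v\,dy=-\int_{-1}^{1}\rho(1-y^2)u'v'\,dy=\langle u,\mathcal{L}v\rangle_{L^2_\rho},$$
so $\mathcal{L}$ is formally symmetric, and the associated nonnegative quadratic form extends by Friedrichs' construction to a self-adjoint operator on $L^2_\rho$ with form domain exactly $\mathcal{H}_0$.

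Next I would exhibit the eigenfunctions. Expanding $\mathcal{L}$ using $\rho'/\rho=-\tfrac{4y}{(p-1)(1-y^2)}$, the eigenvalue equation $\mathcal{L}h=\gamma h$ rewrites as
$$(1-y^2)h''(y)-\frac{2(p+1)}{p-1}\,y\,h'(y)=\gamma h(y),$$
which is a Jacobi-type ODE with symmetric parameters $\alpha=\beta=\tfrac{2}{p-1}$. Looking for a polynomial solution $h_n(y)=\sum_{k=0}^{n}a_k y^k$ and matching the coefficient of $y^n$ on both sides forces
$$\gamma_n=-n(n-1)-\frac{2(p+1)}{p-1}\,n=-n\Bigl(n+\frac{p+3}{p-1}\Bigr),$$
while the remaining $a_k$ are determined by a standard three-term recursion. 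Specializing to $n=0,1$ gives $h_0=c_0$ and $h_1=c_1 y$, with $\gamma_0=0$ and $\gamma_1=-\tfrac{2(p+1)}{p-1}$ as claimed.

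Orthogonality of $\{h_n\}$ in $L^2_\rho$ is automatic from self-adjointness, since the $\gamma_n$ are pairwise distinct. The main obstacle, and the step I expect to require the most care, is completeness. The cleanest route is to check that the Friedrichs extension has compact resolvent: the form domain $\mathcal{H}_0$ embeds compactly into $L^2_\rho$ by a Rellich-type argument adapted to the degenerate weight (the Hardy--Sobolev-type control of boundary behavior in Lemma \ref{2.2} is what makes this work). The spectral theorem for compact self-adjoint operators then produces an orthonormal basis of $L^2_\rho$ consisting of eigenfunctions of $\mathcal{L}$; a Frobenius analysis at the regular singular points $y=\pm1$ shows that the only $L^2_\rho$-eigenfunctions are the polynomial solutions above, so this basis is exactly $\{h_n\}_{n\in\mathbb{N}}$. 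Alternatively, one can simply invoke the classical density of Jacobi polynomials $P_n^{(\alpha,\alpha)}$ with $\alpha=\tfrac{2}{p-1}$ in $L^2((-1,1);(1-y^2)^\alpha\,dy)=L^2_\rho$.
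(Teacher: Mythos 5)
Your proposal is correct and follows the standard route. The paper itself does not reprove Proposition \ref{L}; it cites Section 2 of \cite{MR2362418}, where exactly this identification with the Gegenbauer (ultraspherical, i.e.\ Jacobi with $\alpha=\beta=\tfrac{2}{p-1}$) operator is made and self-adjointness, discreteness and completeness are obtained as you describe. Your coefficient computations (the rewriting of $\mathcal{L}$, the leading-order matching giving $\gamma_n=-n(n+\tfrac{p+3}{p-1})$, and the $n=0,1$ cases) are all verified, and invoking the classical density of Jacobi polynomials in $L^2_\rho$ is indeed the cleanest way to finish. The only point I would tighten is the remark that a Frobenius analysis rules out non-polynomial eigenfunctions: the non-polynomial companion solution can in fact lie in $L^2_\rho$ for some $p$, so it is more robust to observe (as the Friedrichs construction already implies) that it never lies in the form domain $\mathcal{H}_0$, since $\int_{-1}^1\rho(1-y^2)|u'|^2\,dy$ diverges for it; that argument is uniform in $p>1$.
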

We claim also the following:
\begin{lem}\label{too}
Consider $u\in L_\rho ^2$ such that $\mathcal{L} u \in L_\rho ^2$ and 
\begin{equation*}
 \int_{-1}^{1} u(y) \rho (y) dy=0
\end{equation*}
Then, $\int_{-1}^{1} u\mathcal{L} u\rho dy \le \gamma_1 \int u^2 \rho dy$ where $\gamma_1=-2\frac{p+1}{p-1}$.
\end{lem}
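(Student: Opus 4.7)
The statement is a spectral gap estimate for $\mathcal{L}$ restricted to the hyperplane of $L^2_\rho$-functions that are orthogonal to the constants, so the natural approach is to diagonalize $\mathcal{L}$ using the eigenbasis furnished by Proposition \ref{L}.

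First, I would observe that the hypothesis $\int_{-1}^{1} u \rho\,dy = 0$ together with $h_0 \equiv c_0$ says exactly that $u$ is $L^2_\rho$-orthogonal to the eigenfunction $h_0$. Since the family $\{h_n\}_{n\ge 0}$ spans $L^2_\rho$ and is orthogonal for the $\rho$-weighted inner product, I can expand
\[
u = \sum_{n\ge 1} a_n h_n,
\]
where the series converges in $L^2_\rho$ and the $n=0$ term is absent by orthogonality.

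Next, since $\mathcal{L}$ is self-adjoint on $L^2_\rho$ and $u \in \mathcal{D}(\mathcal{L})$ by hypothesis (both $u$ and $\mathcal{L} u$ lie in $L^2_\rho$), the spectral theorem ensures that $\mathcal{L} u = \sum_{n\ge 1} \gamma_n a_n h_n$ in $L^2_\rho$, and Parseval applied in the eigenbasis yields
\[
\int_{-1}^{1} u^2 \rho\,dy = \sum_{n\ge 1} a_n^2 \|h_n\|_{L^2_\rho}^2, \qquad \int_{-1}^{1} u\,\mathcal{L} u\, \rho\,dy = \sum_{n\ge 1} \gamma_n a_n^2 \|h_n\|_{L^2_\rho}^2.
\]

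Finally, since $\gamma_n = -n\bigl(n + \tfrac{p+3}{p-1}\bigr)$ is strictly decreasing in $n\ge 0$, every $n\ge 1$ satisfies $\gamma_n \le \gamma_1 < 0$. Substituting this pointwise inequality into the second Parseval identity and recognizing the result as $\gamma_1 \int u^2\rho\,dy$ gives the desired bound. The only step that requires any genuine care is the termwise action of $\mathcal{L}$ on the eigenfunction series; this is precisely what the assumption $\mathcal{L} u \in L^2_\rho$ secures through the functional calculus for self-adjoint operators, and is the main (mild) technical point of the argument.
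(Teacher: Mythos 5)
Your proof is correct and uses the natural spectral-gap argument: expand $u$ in the orthogonal eigenbasis $\{h_n\}$ furnished by Proposition \ref{L}, note that the hypothesis $\int u\rho=0$ kills the $n=0$ mode (since $h_0$ is a nonzero constant), apply the functional calculus (justified by $u,\mathcal{L}u\in L^2_\rho$) to write $\mathcal{L}u=\sum_{n\ge 1}\gamma_n a_n h_n$, and then bound $\gamma_n\le\gamma_1$ for all $n\ge 1$ using the strict monotonicity of $n\mapsto\gamma_n$. The paper does not reprove this lemma but cites Section~2 of \cite{MR2362418}, where the same spectral decomposition is used, so your approach matches the source.
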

Using the Lorentz transform we get a one dimensional group which keeps invariant equation (\ref{equa}):
\begin{lem}\label{2.6}{\bf (The Lorentz transform in similarity variables)} Consider $w(y,s)$ a solution of equation (\ref{equa}) defined for all $|y|<1$ and $s\in (s_0, s_1)$ for some $s_0$ and $s_1$ in $\mathbb{R}$, and introduce for any $\bb \in (-1,1),$ the function $W\equiv \mathcal{T}_\bb  (w)$ defined by
 \begin{equation}\label{transformation}
W(Y,S)=\frac{(1-\bb^2)^\frac{1}{p-1}}{(1+\bb Y)^\frac{2}{p-1}} w(y,s), \mbox{ where }
y=\frac{Y+\bb }{1+\bb Y} \mbox{ and }s=S-\log\frac{1+\bb Y}{\sqrt{1-\bb^2}}. 
\end{equation}
Then $W(Y,S)=\mathcal{T}_\bb  (w)$ is also a solution of (\ref{equa}) defined 
$$\mbox{for all }|Y|<1\mbox{ and }S\in \left(s_0+\frac{1}{2}\log\frac{1+|\bb|}{1-|\bb|}, s_1-\frac{1}{2}\log\frac{1+|\bb|}{1-|\bb|}\right).$$
\end{lem}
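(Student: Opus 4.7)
The plan is to verify the lemma by a direct chain-rule computation, leveraging the algebraic identity
\[
1 - y^2 \;=\; \frac{(1-\bb^2)(1-Y^2)}{(1+\bb Y)^2},
\]
which links the weight $\rho$ in the two coordinate systems and signals that the transformation is essentially isometric for the operator $\mathcal{L}$. A more geometric derivation is to lift $w$ back to a solution $u(x,t)$ of (\ref{waveq}) via the inverse of (\ref{trans_auto}), apply a Lorentz boost in $(x,t)$ with rapidity $\atang \bb$ (which preserves (\ref{waveq})), and re-express the result in self-similar variables; the direct calculation I sketch here is more self-contained.

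I would first compute the Jacobian of the change of variables $(Y,S) \mapsto (y,s)$:
\[
\partial_Y y = \frac{1-\bb^2}{(1+\bb Y)^2}, \qquad \partial_S y = 0, \qquad \partial_Y s = -\frac{\bb}{1+\bb Y}, \qquad \partial_S s = 1.
\]
Writing $W(Y,S) = \Phi(Y)\, w(y(Y,S), s(Y,S))$ with $\Phi(Y) = (1-\bb^2)^{1/(p-1)}(1+\bb Y)^{-2/(p-1)}$, the chain rule produces $\partial_S^2 W$, $\partial_{YS} W$, $\partial_Y W$, and $\partial_Y^2 W$ in terms of derivatives of $w$ evaluated at $(y,s)$, each multiplied by $\Phi(Y)$ and by explicit algebraic functions of $(Y,\bb)$. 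The nonlinear term is clean: $|W|^{p-1}W = \Phi(Y)^p|w|^{p-1}w$, and $\Phi(Y)^{p-1} = (1-\bb^2)(1+\bb Y)^{-2}$. Substituting into the left-hand side of (\ref{equa}) written for $W$, the identity above (together with $\rho(y)=(1-y^2)^{2/(p-1)}$) collapses every prefactor to a common $\Phi(Y)$, and the remaining expression equals $\Phi(Y)$ times the left-hand side of (\ref{equa}) evaluated at $(y,s)$, which vanishes by hypothesis on $w$.

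The main obstacle is the algebraic bookkeeping of the cross-terms generated by $\partial_Y s = -\bb/(1+\bb Y) \neq 0$: these couple $\partial_s$ and $\partial_y$ derivatives of $w$ inside $\partial_Y W$ and $\partial_Y^2 W$, and their cancellation precisely reflects the covariance of (\ref{equa}) under the Lorentz group. An efficient way to organize the computation is to introduce the hyperbolic coordinates $\xi = \atang y$, $\Xi = \atang Y$, $\xi_0 = \atang \bb$, in which the transformation reduces to the translation $\xi = \Xi + \xi_0$ (up to the $s \leftrightarrow S$ shift), rendering the invariance almost transparent. Finally, the claimed range of $S$ follows by extremizing $\log\frac{1+\bb Y}{\sqrt{1-\bb^2}}$ over $Y\in (-1,1)$: its supremum and infimum, attained as $Y\to \pm\mathrm{sign}(\bb)$, equal $\pm\tfrac12\log\frac{1+|\bb|}{1-|\bb|}$, so requiring $s\in(s_0,s_1)$ uniformly for $|Y|<1$ forces $S$ into the advertised subinterval.
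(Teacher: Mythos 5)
The paper itself offers no proof of this lemma; it simply refers to Section~2 of the real-valued case in Merle--Zaag \cite{MR2362418}, the statement being unchanged because the transformation $\mathcal{T}_\bb$ is linear in $w$ (so real and complex cases are identical). Your plan is therefore the only proof on the table, and both routes you sketch are sound. The preparatory algebra you record is all correct: the identity $1-y^2 = (1-\bb^2)(1-Y^2)/(1+\bb Y)^2$, the Jacobian, the behaviour of the nonlinearity under multiplication by $\Phi(Y)$, and the determination of the $S$-interval by extremizing $\log\bigl((1+\bb Y)/\sqrt{1-\bb^2}\bigr)$ over $|Y|<1$.

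What is missing is the actual verification of the cancellation of the cross-terms generated by $\partial_Y s\neq 0$ after substituting into (\ref{equa}); you name this as ``the main obstacle'' and then describe strategies (hyperbolic coordinates, or the lift to the original wave equation) without carrying either to completion. As a plan this is acceptable, but as a proof it would need one of these finished. I would recommend the geometric route, which you mention as an aside: take $u$ a solution of (\ref{waveq}), WLOG with blow-up point $(0,1)$, apply the Lorentz boost of rapidity $-\atang\bb$ fixing the vertex $(0,1)$ (a symmetry of (\ref{waveq}) that preserves the backward light cone), and write the boosted solution in self-similar variables. A short computation shows that $1-t' = (1-t)\,(1-\bb y)/\sqrt{1-\bb^2}$, that the new self-similar space variable equals $(y-\bb)/(1-\bb y)$, and that $(1-\bb y)/\sqrt{1-\bb^2} = \sqrt{1-\bb^2}/(1+\bb Y)$; these facts reproduce exactly the prefactor, the map $y=(Y+\bb)/(1+\bb Y)$, and the time shift $s = S - \log\bigl((1+\bb Y)/\sqrt{1-\bb^2}\bigr)$, so the invariance of (\ref{equa}) follows from that of (\ref{waveq}) with no bookkeeping at all. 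The domain restriction then comes from requiring $e^{-s_1}<1-t<e^{-s_0}$ to hold for every $|Y|<1$, which is precisely your extremization argument. Writing this out would turn your outline into a complete and cleaner proof than the brute-force chain rule.
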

In the following, we recall from \cite{MR2362418} some properties of the transformation $\mathcal{T}_\bb $:
\begin{lem}{\bf (Continuity of $\mathcal{T}_\bb $)} \label{ff}There exists $C_0>0$ such that for all $\bb \in (-1,1)$ and $v\in \mathcal{H}_0,$ we have
\item{(i)}{(Continuity of $\mathcal{T}_\bb $ in $\mathcal{H}_0$)}
\begin{eqnarray*}
\frac{1}{C_0} ||v||_{\mathcal{H}_0}\leq ||\mathcal{T}_\bb  (v)||_{\mathcal{H}_0}\leq C_0 ||v||_{\mathcal{H}_0}.
\end{eqnarray*}
\item{(ii)} For any $V_1$ and $V_2$ in $L^2_\rho$, we have
$$ \int_{-1}^{1} V_1(y) V_2(y) \rho (y) dy=\int_{-1}^{1} \frac{1-\bb^2}{(1-\bb z)^2}v_1(z) v_2(z) \rho (z) dz$$
where $v_i=\mathcal{T}_{-\bb} V_i$, $i\in\{1,2\}$.
\end{lem}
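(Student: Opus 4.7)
The plan is to exploit the explicit form of the transformation $\mathcal{T}_\bb$ given in (\ref{transformation}) by performing a single change of variable $y = (z-\bb)/(1-\bb z)$ in each integral, and then combining the resulting Jacobian identities with two elementary algebraic bounds encoding the Lorentz structure: namely, $(1-\bb y)^2 - (1-\bb^2)(1-y^2) = (y-\bb)^2 \ge 0$ (which yields $(1-\bb^2)/(1-\bb y)^2 \le 1/(1-y^2)$), and $1-\bb y - (1-y^2)/2 = ((y-\bb)^2 + 1-\bb^2)/2 \ge 0$ (which yields $(1-y^2)/(1-\bb y)^2 \le 4/(1-y^2)$). Both bounds will be crucial to reducing the $\bb$-dependent weights to the single weight $\rho/(1-y^2)$, which is controlled by $\|\cdot\|_{\mathcal{H}_0}$ through the Hardy--Sobolev inequality of Lemma \ref{2.2}.

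For part (ii), I would first compute the identities $dy = (1-\bb^2)(1-\bb z)^{-2}\,dz$, $1-y^2 = (1-z^2)(1-\bb^2)(1-\bb z)^{-2}$, and consequently $\rho(y)\,dy = \rho(z)(1-\bb^2)^{(p+1)/(p-1)}(1-\bb z)^{-2(p+1)/(p-1)}\,dz$. Combined with the pointwise conversion $V_i(y) = (1-\bb z)^{2/(p-1)}(1-\bb^2)^{-1/(p-1)}\,v_i(z)$ coming from the relation $v_i = \mathcal{T}_{-\bb} V_i$, all the exponents of $(1-\bb z)$ and $(1-\bb^2)$ cancel except precisely for the announced factor $(1-\bb^2)/(1-\bb z)^2$, giving the claimed identity at once.

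For part (i), I would split $\|V\|_{\mathcal{H}_0}^2 = \int_{-1}^1 |V|^2\rho\,dY + \int_{-1}^1 (1-Y^2)|V'|^2\rho\,dY$ and treat each piece separately. The $L^2_\rho$ part equals $\int (1-\bb^2)(1-\bb z)^{-2}|v|^2\rho\,dz$ by (ii), which is bounded by $\int |v|^2\rho/(1-z^2)\,dz \le C\|v\|_{\mathcal{H}_0}^2$ thanks to the first algebraic bound and Lemma \ref{2.2}. For the derivative part, the chain rule gives $V'(Y) = A(Y)v(y) + B(Y)v'(y)$ with explicit rational $A, B$ in $(1+\bb Y)$; after the same change of variable, the pure piece $\int |B|^2|v'|^2(1-Y^2)\rho\,dY$ collapses \emph{exactly} to $\int(1-z^2)|v'|^2\rho\,dz$ (the various exponents of $(1-\bb z)$ and $(1-\bb^2)$ match perfectly), while both the $|A|^2|v|^2$ term and the cross term $2\,\mathrm{Re}(A\bar B\, v\bar{v'})$ acquire the weight $(1-z^2)/(1-\bb z)^2$, which the second algebraic bound dominates by $4/(1-z^2)$, so after Cauchy--Schwarz both reduce to $C\|v\|_{\mathcal{H}_0}^2$ via Lemma \ref{2.2}. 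Summing the pieces yields $\|V\|_{\mathcal{H}_0} \le C\|v\|_{\mathcal{H}_0}$, and the reverse inequality follows by applying the same argument to $\mathcal{T}_{-\bb}(V) = v$ (the constants being symmetric under $\bb \mapsto -\bb$). The main obstacle is the careful bookkeeping of the various powers of $(1-\bb Y)$ in the $|A|^2$ and cross terms; recognizing that the two algebraic bounds above are exactly what is needed to produce estimates uniform in $\bb \in (-1,1)$ is the heart of the argument.
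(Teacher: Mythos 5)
Your proof is correct and follows the same approach as the reference the paper cites (Lemma 2.8 of \cite{MR2362418}) for part (i), namely the explicit Lorentz change of variables combined with the Hardy--Sobolev inequality of Lemma \ref{2.2} to absorb the $\bb$-dependent weights. The paper itself only cites \cite{MR2362418} for (i) and calls (ii) a direct consequence of the change of variable, so your argument correctly fills in the details that were left implicit.
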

\begin{proof}
$(i)$ see Lemma $2.8$ page 57 in \cite{MR2362418}.\\
$(ii)$ This is a direct consequence of the change of variable (\ref{transformation}) and the definition (\ref{8}) of $\rho(y)$.
\end{proof}
\noindent {\bf Acknowledgments}: The author would like to thank the referee for his remarks and suggestions which undoubtedly greatly improved the presentation of our results. The author would like to thank also Professor H. Zaag for useful suggestions during the preparation of this paper.


\begin{thebibliography}{10}
\bibitem{ali95}
S.~Alinhac.
\newblock {\em Blowup for nonlinear hyperbolic equations}.
\newblock Progress in Nonlinear Differential Equations and their Applications,
  17. Birkh\"auser Boston Inc., Boston, MA, 1995.

\bibitem{MR1968197}
S.~Alinhac.
\newblock A minicourse on global existence and blowup of classical solutions to
  multidimensional quasilinear wave equations.
\newblock In {\em Journ\'ees ``\'{E}quations aux {D}\'eriv\'ees {P}artielles''
  ({F}orges-les-{E}aux, 2002)}, pages Exp. No. I, 33. Univ. Nantes, Nantes,
  2002.

\bibitem{MR1861514}
C.~Antonini and F.~Merle.
\newblock Optimal bounds on positive blow-up solutions for a semilinear wave
  equation.
\newblock {\em Internat. Math. Res. Notices}, (21):1141--1167, 2001.

\bibitem{MR849476}
L.~A. Caffarelli and A.~Friedman.
\newblock The blow-up boundary for nonlinear wave equations.
\newblock {\em Trans. Amer. Math. Soc.}, 297(1):223--241, 1986.

\bibitem{CZ12}
R.~C\^ote and H.~Zaag.
\newblock Construction of a multi-soliton blow-up solution to the semilinear
  wave equation in one space dimension.
\newblock {\em Comm. Pure Appl. Math.}, to appear, 2012.

\bibitem{6}
 R. Donninger, M. Huang, J. Krieger and W.~Schlag.
\newblock Exotic blowup solutions for the $u^5$ focusing wave equation in
  {$\mathbb{R}^3$}.
\newblock {\em preprint}, arXiv:1212.4718, 2012.

\bibitem {ds}
 R.~Donninger  and B.~Sch{\"o}rkhuber.
\newblock{ Stable self-similar blow up for energy supercritical wave
              equations}.
\newblock {\em Trans. Amer. Math. Soc. }, to appear, 2013.

\bibitem {MR2909934}
 R.~Donninger  and B.~Sch{\"o}rkhuber.
\newblock{ Stable self-similar blow up for energy subcritical wave
              equations}.
\newblock {\em Dyn. Partial Differ. Equ.}, 9(1):63--87,2012.

\bibitem{10}
T.~Duyckaerts, C.~Kenig, and F.~Merle.
\newblock Universality of blow-up profile for small radial type II blow-up solutions of energy-critical wave equation.
\newblock {\em J. Eur. Math. Soc. (JEMS)}, 13(3):533--599, 2011.

\bibitem{12}
T.~Duyckaerts, C.~Kenig, and F.~Merle.
\newblock Profiles of bounded radial solutions of the focusing, energy-critical
  wave equation.
\newblock {\em Geom. Funct. Anal.}, 22(3):639--698, 2012.

\bibitem{13}
T. Duyckaerts, C. Kenig, and F. Merle.
\newblock Universality of the blow-up profile for small type {II} blow-up
  solutions of the energy-critical wave equation: the nonradial case.
\newblock {\em J. Eur. Math. Soc. (JEMS)}, 14(5):1389--1454, 2012.

\bibitem{11}
T.~Duyckaerts, C.~Kenig, and F.~Merle.
\newblock Classification of radial solutions of the focusing, energy-critical
  wave equation.
\newblock {\em Cambridge J. Math}, 1:75--144, 2013.

\bibitem{14}
T. Duyckaerts and F. Merle.
\newblock Dynamics of threshold solutions for energy-critical wave equation.
\newblock {\em Int. Math. Res. Pap. IMRP}, pages Art ID rpn002, 67, 2008.

\bibitem{MR1317705}
S.~Filippas and F.~Merle.
\newblock Modulation theory for the blowup of vector-valued nonlinear heat
  equations.
\newblock {\em J. Differential Equations}, 116(1):119--148, 1995.

\bibitem{MR1190421}
J.~Ginibre, A.~Soffer, and G.~Velo.
\newblock The global {C}auchy problem for the critical nonlinear wave equation.
\newblock {\em J. Funct. Anal.}, 110(1):96--130, 1992.

\bibitem{MR1256167}
J.~Ginibre and G.~Velo.
\newblock Regularity of solutions of critical and subcritical nonlinear wave
  equations.
\newblock {\em Nonlinear Anal.}, 22(1):1--19, 1994.

\bibitem{MR3130348}
M.~A. Hamza and H.~Zaag.
\newblock Blow-up behavior for the {K}lein--{G}ordon and other perturbed
  semilinear wave equations.
\newblock {\em Bull. Sci. Math.}, 137(8):1087--1109, 2013.

\bibitem{18}
S.~Ibrahim, N.~Masmoudi and K.~Nakanishi.
\newblock Scattering threshold for the focusing nonlinear {K}lein-{G}ordon
  equation.
\newblock {\em Anal. PDE}, 4(3):405--460, 2011.


\bibitem{19}
S.~Ibrahim, N.~Masmoudi and K.~Nakanishi.
\newblock Threshold solutions in the case of mass-shift for the critical
  {K}lein--{G}ordon equation.
\newblock {\em Trans. Amer. Math. Soc.}, to appear, 2013.

\bibitem{20}
C.~E. Kenig and F. Merle.
\newblock Global well-posedness, scattering and blow-up for the energy-critical
  focusing non-linear wave equation.
\newblock {\em Acta Math.}, 201(2):147--212, 2008.

\bibitem{21}
C.~E. Kenig and F.~Merle.
\newblock Radial solutions to energy supercritical wave equations in odd
  dimensions.
\newblock {\em Discrete Contin. Dyn. Syst.}, 31(4):1365--1381, 2011.

\bibitem{KL93b}
S.~Kichenassamy and W.~Littman.
\newblock Blow-up surfaces for nonlinear wave equations. {I}.
\newblock {\em Comm. Partial Differential Equations}, 18(3-4):431--452, 1993.

\bibitem{KL93a}
S.~Kichenassamy and W.~Littman.
\newblock Blow-up surfaces for nonlinear wave equations. {II}.
\newblock {\em Comm. Partial Differential Equations}, 18(11):1869--1899, 1993.
\bibitem{MR2869186}
R. Killip, B. Stovall, and M. Visan.
\newblock Blowup behaviour for the nonlinear Klein--Gordon equation.
\newblock {\em Math. Ann.}, to appear, 2013.
\bibitem{26}
J.~Krieger, K.~Nakanishi, and W.~Schlag.
\newblock On the focusing critical semi-linear wave equation.
\newblock {\em Amer. J. Math.}, 129:843--913, 2007.

\bibitem{24}
J.~Krieger, K.~Nakanishi, and W.~Schlag.
\newblock Global dynamics away from the ground state for the energy-critical
  nonlinear wave equation.
\newblock {\em Amer. J. Math.}, 135:935--965, 2013.

\bibitem{25}
J.~Krieger, K.~Nakanishi, and W.~Schlag.
\newblock Global dynamics of the nonradial energy-critical wave equation above
  the ground state energy.
\newblock {\em Discrete Contin. Dyn. Syst.}, 33(6):2423--2450, 2013.

\bibitem{l74}
H.~A. Levine.
\newblock Instability and nonexistence of global solutions to nonlinear wave
  equations of the form {$Pu\sb{tt}=-Au+{\cal F}(u)$}.
\newblock {\em Trans. Amer. Math. Soc.}, 192:1--21, 1974.

\bibitem{MR1335386}
H.~Lindblad and C.~D. Sogge.
\newblock On existence and scattering with minimal regularity for semilinear
  wave equations.
\newblock {\em J. Funct. Anal.}, 130(2):357--426, 1995.

\bibitem{MZ05}
F.~Merle and H.~Zaag.
\newblock Determination of the blow-up rate for the semilinear wave equation.
\newblock {\em Amer. J. Math.}, 125(5):1147--1164, 2003.

\bibitem{MZ2005}
F.~Merle and H.~Zaag.
\newblock Determination of the blow-up rate for a critical semilinear wave
  equation.
\newblock {\em Math. Ann.}, 331(2):395--416, 2005.

\bibitem{MR2147056}
F.~Merle and H.~Zaag.
\newblock On growth rate near the blowup surface for semilinear wave equations.
\newblock {\em Int. Math. Res. Not.}, (19):1127--1155, 2005.

\bibitem{MR2362418}
F.~Merle and H.~Zaag.
\newblock Existence and universality of the blow-up profile for the semilinear
  wave equation in one space dimension.
\newblock {\em J. Funct. Anal.}, 253(1):43--121, 2007.

\bibitem{MR2415473}
F.~Merle and H.~Zaag.
\newblock Openness of the set of non-characteristic points and regularity of
  the blow-up curve for the 1 {D} semilinear wave equation.
\newblock {\em Comm. Math. Phys.}, 282(1):55--86, 2008.

\bibitem{Mz10}
F.~Merle and H.~Zaag.
\newblock Isolatedness of characteristic points for a semilinear for a semilinear wave equation in one space dimension.
Points caract\'eristiques \`a l'explosion pour une \'equation
  semilin\'eaire des ondes.
\newblock In {\em ``S\'eminaire sur les \'Equations aux D\'eriv\'ees Partielles,2009-2010}, pages Exp. No. 11, 10p. \'Ecole Polytech., Palaiseau, 2010.

\bibitem{MR2799813}
F.~Merle and H.~Zaag.
\newblock Blow-up behavior outside the origin for a semilinear wave equation in
  the radial case.
\newblock {\em Bull. Sci. Math.}, 135(4):353--373, 2011.

\bibitem{MR2931219}
F.~Merle and H.~Zaag.
\newblock Existence and classification of characteristic points at blow-up for
  a semilinear wave equation in one space dimension.
\newblock {\em Amer. J. Math.}, 134(3):581--648, 2012.

\bibitem{Mz12}
F.~Merle and H.~Zaag.
\newblock Isolatedness of characteristic points at blowup for a 1-dimensional
  semilinear wave equation.
\newblock {\em Duke Math. J}, 161(15):2837--2908, 2012.

\bibitem{MZ13}
F.~Merle and H.~Zaag.
\newblock Dynamics near explicit stationary solutions in similarity variables for solutions of a semilinear wave equation in higher dimensions.
\newblock {\em Submitted.} arXiv:1309.7756, 2013

\bibitem{MZ13'}
F.~Merle and H.~Zaag.
\newblock On the stability of the notion of non-characteristic point and blow-up profile for semilinear wave equations.
\newblock {\em Submitted.} arXiv:1309.7760, 2013

\bibitem{MR2847755}
K.~Nakanishi and W.~Schlag.
\newblock {\em Invariant manifolds and dispersive {H}amiltonian evolution
  equations}.
\newblock Zurich Lectures in Advanced Mathematics. European Mathematical
  Society (EMS), Z\"urich, 2011.

\bibitem{MR1674843}
J.~Shatah and M.~Struwe.
\newblock {\em Geometric wave equations}, volume~2 of {\em Courant Lecture
  Notes in Mathematics}.
\newblock New York University Courant Institute of Mathematical Sciences, New
  York, 1998.







\end{thebibliography}
\medskip
\noindent{\bf Address:}\\
Universit\'e Paris 13, Institut Galil\'ee, Laboratoire Analyse G\'eometrie et Applications, \\
  CNRS-UMR 7539, 99 avenue J.B. Cl\'ement 93430, Villetaneuse, France.
\\ \texttt{e-mail: azaiez@math.univ-paris13.fr}
\end{document}